\newcommand{\FS}{\mathsf{FS}}
\title[Friends-and-strangers graphs on complete multipartite graphs]{The connectivity of friends-and-strangers graphs on complete multipartite graphs}
\author[H. Zhu]{Honglin Zhu}
\begin{document}

\begin{abstract}
For simple graphs $X$ and $Y$ on $n$ vertices, the friends-and-strangers graph $\mathsf{FS}(X,Y)$ is the graph whose vertex set consists of all bijections $\sigma: V(X) \to V(Y)$, where two bijections $\sigma$ and $\sigma'$ are adjacent if and only if they agree on all but two adjacent vertices $a, b \in V(X)$ such that $\sigma(a), \sigma(b) \in V(Y)$ are adjacent in $Y$. Resolving a conjecture of Wang, Lu, and Chen, we completely characterize the connectedness of $\mathsf{FS}(X, Y)$ when $Y$ is a complete bipartite graph. We further extend this result to when $Y$ is a complete multipartite graph. We also determine when $\mathsf{FS}(X, Y)$ has exactly two connected components where $X$ is bipartite and $Y$ is a complete bipartite graph. 
\end{abstract}

\maketitle

\section{Introduction}\label{sec_intro}
Throughout this paper, we assume that all graphs are simple unless specified otherwise. The notion of a friends-and-strangers graph, defined by Defant and Kravitz in \cite{defant2021}, is given as follows.
\begin{definition}[\cite{defant2021}]
    Let $X$ and $Y$ be graphs on $n$ vertices. The \blue{\emph{friends-and-strangers}} graph on $X$ and $Y$, denoted $\FS(X, Y)$, is the graph whose vertex set is the set of bijections $\sigma: V(X) \to V(Y)$, and two bijections $\sigma$ and $\sigma'$ are adjacent in $\FS(X, Y)$ if and only if there exists an edge $\{a, b\} \in E(X)$ such that:
    \begin{itemize}
        \item $\{\sigma(a), \sigma(b)\} \in E(Y)$;
        \item $\sigma(a) = \sigma'(b)$, $\sigma(b) = \sigma'(a)$, and $\sigma(c) = \sigma'(c)$ for all $c \in V(X) \setminus \{a, b\}$.
    \end{itemize}
    The operation transforming $\sigma$ to $\sigma'$ is referred to as an \blue{\emph{$(X, Y)$-friendly swap}} across $\{a, b\}$. 
\end{definition}

The friends-and-strangers graph can be seen as a formalization of the following problem. Let $X$ be a graph on $n$ vertices. Suppose that $n$ people, who can pairwise be friends or strangers, with their friendships indicated by the edges of the graph $Y$, stand so that one person is at each vertex of $X$. At any time, two friends standing at adjacent vertices of $X$ may switch places. Our goal is to understand which configurations of people standing on this graph can be reached from others when allowing a sequence of these swaps. 

In the literature, people have often studied the structure of the friends-and-strangers graph $\FS(X, Y)$ when one of $X, Y$ is a specific graph (see \cite{wilson1974, defant2022, lee2022, jeong2022, wang2023, wang2023b, brunck2023}). Another type of problem one can explore is the structure of friends-and-strangers graphs on random graphs (see \cite{alon2023, milojevic2022, wang2023c}). Yet another possibility is to investigate $\FS(X, Y)$ when $X, Y$ satisfy certain properties but are not specific graphs (see \cite{alon2023,bangachev2022,defant2021,defant2022,jeong2023}). This paper falls into the first category, where we fix $Y$ to be a complete multipartite graph. 

Before Defant and Kravitz defined friends-and-strangers graphs, Wilson studied a special case of this setup in \cite{wilson1974} in the form of block puzzles. In the famous $15$-puzzle, we are given a $4 \times 4$ grid with $15$ numbered tiles occupying all but one of the squares. The goal is to slide those tiles to obtain the configuration where the tiles are ordered. This problem can be rephrased in terms of friends-and-strangers graphs; namely, it concerns the friends-and-strangers graph of the $4 \times 4$ grid graph and the star graph on $16$ vertices. Indeed, we can imagine $16$ people standing on the grid, with one person (the empty spot) who is a friend with everyone else, and there are no other friendships. In \cite{wilson1974}, Wilson computed the the number of connected components of $\FS(X, \mathsf{Star}_{n})$, where $X$ is biconnected (connected and does not have a cut-vertex) and $\mathsf{Star}_{n}$ is the star graph on $n$ vertices. 

As $\mathsf{Star}_{n}$ can also be seen as the complete bipartite graph $K_{1, n-1}$ with vertex bipartition into a singleton set and a set of size $n - 1$, Defant and Kravitz suggested in \cite{defant2021} that a natural extension of Wilson's result is to investigate the connectedness of $\FS(X, K_{k, n-k})$ for any graph $X$. Wang, Lu, and Chen (\cite{wang2023}) completely characterized when this graph is connected in the case $k = 2$, and also conjectured what happens when $k > 2$. In this paper, we prove their conjecture. To formulate the theorem, we need the following key definition.
\begin{definition}[\cite{wang2023}]\label{def_bridge}
    A path $v_1, v_2, \ldots, v_k$ in a graph is a \blue{\emph{$k$-bridge}} if each edge in the path is a cut-edge, $v_2, \ldots, v_{k - 1}$ have degree $2$ in the graph, and $v_1$ and $v_k$ do not have degree $1$. In particular, a non-trivial cut-edge (neither of its ends has degree $1$) is a $2$-bridge. We also say that a single cut-vertex is a $1$-bridge.
\end{definition}

Besides proving the conjecture of Wang, Lu, and Chen, we consider a further generalization to $\FS(X, K_{k_1, \ldots, k_t})$, where $K_{k_1, \ldots, k_t}$ is the complete $t$-partite graph with partition classes of sizes sizes $1 \leq k_1 \leq \cdots \leq k_t$. (Recall that two vertices in a complete multipartite graph are adjacent if and only if they are in distinct partition classes.) Let $\theta_0$ be the graph on $7$ vertices given in \Cref{fig_theta_0}. This graph is an exception to Wilson's theorem \cite{wilson1974}, which we include as the first case of our main theorem for completeness. 
\begin{figure}[ht]
    \centering
    \begin{tikzpicture}
        \tikzstyle{vertex} = [circle, draw]
        \tikzstyle{edge} = [-]

        \node[vertex](a) at (-2, 0) {};
        \node[vertex](b) at (-1, 1.7) {};
        \node[vertex](c) at (1, 1.7) {};
        \node[vertex](d) at (2, 0) {};
        \node[vertex](e) at (1, -1.7) {};
        \node[vertex](f) at (-1, -1.7) {};
        \node[vertex](g) at (0, 0) {};
        
        \draw[edge] (a)--(b)--(c)--(d)--(e)--(f)--(a);
        \draw[edge] (a)--(g)--(d);
    \end{tikzpicture}
    \caption{The $\theta_0$ graph.}
    \label{fig_theta_0}
\end{figure}
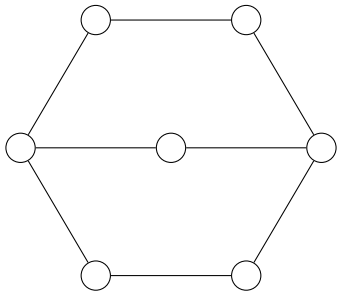
\begin{theorem}\label{thm_main}
    Suppose $t \geq 2$ and $1 \leq k_1 \leq \cdots \leq k_t$, where $n = k_1 + \cdots + k_t \geq 4$. Let $X$ be a graph on $n$ vertices. The following hold:
    \begin{enumerate}
        \item Suppose $k_t = n - 1$. Then $\FS(X, K_{1, n-1})$ is connected if and only if $X$ is connected, is non-bipartite, is not a cycle, is not the graph $\theta_0$ (see \Cref{fig_theta_0}), and does not contain a cut-vertex ($1$-bridge). 
        \item Suppose $t = 2$ and $k_1 \geq 2$. Then $\FS(X, K_{k_1, n - k_1})$ is connected if and only if $X$ is connected, is non-bipartite, is not a cycle, and does not contain a $k_1$-bridge. 
        \item Suppose $t > 2$ and either $k_t > 2$ or $\gcd(k_1, \ldots, k_t) > 1$. Then $\FS(X, K_{k_1, \ldots, k_t})$ is connected if and only if $X$ is connected, is not a cycle, and does not contain an $(n - k_t)$-bridge.
        \item Suppose $t > 2$, $k_1 = 1$, and $k_t = 2$. Then $\FS(X, K_{k_1, \ldots, k_t})$ is connected if and only if $X$ is connected and is not a path (i.e., does not contain an $(n - 2)$-bridge).
    \end{enumerate}
\end{theorem}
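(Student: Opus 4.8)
The proof divides along the four cases; part~(1) follows from Wilson's theorem \cite{wilson1974} (including the $\theta_0$ exception), so the substance is in (2)--(4), and within (2) the sub-case $k_1=2$ is the theorem of Wang, Lu, and Chen \cite{wang2023}. For each of (2)--(4) I would prove necessity by producing conserved quantities and sufficiency by an inductive reduction to a structured core.

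\emph{Necessity.} If $X$ is disconnected then a friendly swap never moves a token between components of $X$, so $\FS(X,Y)$ is a nontrivial product. When $t=2$ and $X$ is bipartite with parts $P,Q$, every friendly swap interchanges one token of each $Y$-class, hence the sign of $\sigma$ and the parity of the number of tokens of the first class lying in $P$ both flip with every move; their sum modulo $2$ is a nonconstant invariant (this is essentially the obstruction of \cite{wang2023}). When $X=C_n$ with $n\ge 4$, the relevant $Y$ always has a class of size at least $2$, and the cyclic order in which the tokens of a fixed class occur around $C_n$ cannot change, since two such tokens are never $Y$-adjacent and so can never be transposed past one another; for a class of size $\ge 3$ this already takes more than one value, when all classes have size $2$ a sign/parity refinement supplies the missing invariant, and the presence of a size-$1$ class in case~(4) is exactly what makes $X=C_n$ admissible there. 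Finally, for the bridge condition I would show that if $v_1,\dots,v_k$ is a $k$-bridge and $k=k_1$ (case~(2)) or $k=n-k_t$ (cases~(3),(4)), then all traffic between the two sides of $X$ is forced single file through the path $v_1\cdots v_k$, and the matching between the bridge length and the class sizes of $Y$ pins the tokens crossing the bridge into a fixed cyclic order; making this quantitative (in the spirit of Wilson's rotation invariant) yields a nontrivial invariant. In particular, $X$ being a path in (4) is literally the statement that $X$ contains an $(n-2)$-bridge.

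\emph{Sufficiency.} Here I would start from the standard monotonicity fact that adding an edge to $X$ cannot disconnect $\FS(X,Y)$ (see \cite{defant2021}), so by induction on $|E(X)|$ it suffices to treat $X$ that is edge-minimal subject to the hypotheses: every non-cut edge of such an $X$ lies on what is essentially the unique odd cycle (in (1),(2)) or is needed to keep $X$ from collapsing into a cycle or into one with the forbidden bridge. I would analyze these $X$ through the block--cut tree together with an open ear decomposition of each $2$-connected block, peeling off pendant paths and ears one at a time down to a base graph consisting of a single short cycle with a controlled family of pendant paths, none of the forbidden length. On such a base graph the engine is a cycle $C\subseteq X$ used as a rotor: using the absence of a $k$-bridge to guarantee there is always a free vertex on which to ``park'' a token, I would route an arbitrary token onto $C$, apply friendly swaps along $C$ to permute the tokens currently on it, and route back, generating enough moves to reach a fixed target bijection. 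The hypotheses enter through parity: when $t=2$ the only way to realize an odd permutation is to exploit an odd cycle of $X$, which is why non-bipartiteness is needed there, whereas when $t\ge 3$ the extra freedom---any two tokens of different classes may be swapped, and with three classes one can always shunt a token aside---removes the sign obstruction and lets an even cycle serve as the rotor, so bipartite $X$ becomes admissible.

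\emph{Main obstacle.} I expect the hardest point to be the sufficiency argument at the bridge condition. Forbidding only a single length means $X$ may still contain long cut-edges and long pendant paths, and I must show both that routing through such a path never becomes permanently stuck and that the cyclic-order invariant which would otherwise obstruct connectivity is genuinely trivialized by the particular class sizes ($k_1$ in (2), $n-k_t$ in (3),(4))---this is precisely the delicate quantitative balance that forces the three cases to carry the bridge lengths they do, and dovetailing it with the ear-decomposition induction is the crux. A secondary difficulty is isolating the finite list of sporadic small exceptions: confirming that $\theta_0$ is the only additional exception in (1) and that no further sporadic graphs intrude in (2)--(4) once $n\ge 4$, which I would settle by a direct check after the structural reduction has bounded the cases to be examined.
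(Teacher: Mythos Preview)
Your high-level split into ``necessity via invariants'' and ``sufficiency via a cycle rotor'' is in the right spirit, but on both halves the actual mechanisms diverge from the paper, and on the sufficiency side there is a genuine gap.

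\textbf{Necessity.} For the bridge obstruction the paper does not construct a cyclic-order invariant along the bridge. It simply quotes a lemma of Milojevi\'c: if $X$ has a $k$-bridge and $Y$ is not $(k{+}1)$-connected then $\FS(X,Y)$ is disconnected. Since $K_{k_1,\dots,k_t}$ has vertex-connectivity exactly $n-k_t$, this instantly gives the $(n-k_t)$-bridge condition. Your ``tokens pinned in cyclic order on the bridge'' picture is not the operative obstruction and would be hard to make rigorous at the stated bridge length. For the cycle case the paper computes the number of components of $\FS(C_n,K_{k_1,\dots,k_t})$ exactly (it is $\gcd(k_1,\dots,k_t)\cdot\prod(k_i-1)!$), which is what makes the split between cases~(3) and~(4) precise.

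\textbf{Sufficiency.} Here your plan has a real hole. The paper does \emph{not} induct on $|E(X)|$ or use ear decompositions. The organizing tool you are missing is the exchangeability criterion (Proposition~2.2 in the paper): to prove $\FS(X,Y)$ connected it suffices to show that any two $Y$-vertices $u,v$ in the same partition class are $(X,Y)$-exchangeable from every $\sigma$ with $\sigma^{-1}(u),\sigma^{-1}(v)$ adjacent in $X$. With this in hand the paper proceeds as follows. If $X$ is not a tree, pick a minimal (odd when $t=2$) cycle $C$; a careful ``fireman'' routing argument (Proposition~3.3), which is exactly where the no-$(n-k_t)$-bridge hypothesis is spent, shows one can populate $C\cup\{\text{one extra vertex}\}$ with the two target tokens and suitable tokens from other classes; then a self-contained group-theoretic lemma (two explicit elements generate $S_k\times S_\ell$ when $k+\ell$ is even) proves the ``stopwatch'' $\FS$ is connected, yielding the exchange. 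If $X$ \emph{is} a tree (possible only when $t>2$), there is no cycle to rotor on, and the paper runs a separate argument on a ``snake tongue'' subgraph (a $Y$-shaped tree) to perform the exchange.

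Your proposal never isolates the exchangeability reduction, and your edge-minimality step does not lead to a tractable base case: ``edge-minimal subject to the hypotheses'' is not a small or well-structured family, and peeling ears changes $|V(X)|$, so there is no induction on $\FS$ available that way. Most concretely, you have no plan for the tree case in (3)--(4); your rotor argument presupposes a cycle in $X$, but when $t>2$ the theorem must also cover trees, and that is a distinct piece of work in the paper.
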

Case (1) of \Cref{thm_main} is proved by Wilson in \cite{wilson1974}. Case (2) is Wang, Lu, and Chen's conjecture. We break down cases (2), (3), and (4) into three theorems. The following takes care of the ``only if'' directions of all three cases. 
\begin{theorem}\label{thm_multi_disconnected}
    Suppose $t \geq 2$ and $1 \leq k_1 \leq \cdots \leq k_t$, where $n = k_1 + \cdots + k_t \geq 4$. Let $X$ be a graph on $n$ vertices. The following hold:
    \begin{enumerate}
        \item Suppose $t = 2$ and $k_1 \geq 2$. Then $\FS(X, K_{k_1, n - k_1})$ is disconnected if $X$ is disconnected, is bipartite, is a cycle, or contains a $k$-bridge.
        \item Suppose $t > 2$ and either $k_t > 2$ or $\gcd(k_1, \ldots, k_t) > 1$. Then $\FS(X, K_{k_1, \ldots, k_t})$ is disconnected if $X$ is disconnected, is a cycle, or contains an $(n - k_t)$-bridge.
        \item Suppose $t > 2$, $k_1 = 1$, and $k_t = 2$. Then $\FS(X, K_{k_1, \ldots, k_t})$ is disconnected if $X$ is disconnected or is a path.
    \end{enumerate}
\end{theorem}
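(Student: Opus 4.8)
The plan is to treat the possible ``bad'' features of $X$ one at a time, in each case exhibiting a function on the vertices of $\FS(X, K_{k_1, \ldots, k_t})$ that is constant along friendly swaps and takes at least two values. Three features are handled quickly. If $X$ is disconnected, a friendly swap acts inside a single component of $X$, so the partition of $V(Y)$ into the sets $\sigma(C)$ over components $C$ of $X$ is an invariant, and it varies with $\sigma$. If $X$ is a path (needed in part~(3), and also in part~(2) when the bridge degenerates, since a connected graph with an $(n-2)$-bridge is a path), then two vertices of $Y$ in a common part of size $2$ are never $Y$-adjacent, so they can never be swapped with each other; along the path their left-to-right order is an invariant, and it is non-constant. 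If $X$ is bipartite (asserted only in part~(1), where $Y = K_{k_1, k_2}$ with $k_1, k_2 \ge 2$, so $Y$ is bipartite too), fix a reference bijection $\sigma_0$: every friendly swap is a genuine transposition of two vertices of $Y$, so $\mathrm{sgn}(\sigma \sigma_0^{-1})$ flips at each step; and since each edge of $X$ joins the parts $A, B$ of $X$ while its $\sigma$-image joins the parts $P_1, P_2$ of $Y$, the parity of $|\{v \in A : \sigma(v) \in P_1\}|$ also flips at each step. Hence the sum of these two bits is invariant, and it is non-constant because replacing $\sigma$ by $\sigma \circ (a\,b)$ for $a, b \in V(X)$ with $\sigma(a), \sigma(b) \in P_1$ — possible since $k_1 \ge 2$ — changes the sign while leaving the $P_1$-count fixed.

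Next suppose $X$ is a cycle. A friendly swap can never exchange two vertices of $Y$ lying in a common part $P_i$; it only slides one vertex of $P_i$ past a vertex outside $P_i$, so the cyclic order around $X$ of the $k_i$ vertices of $Y$ in $P_i$ is an invariant, and when some part has size $\ge 3$ it takes at least two values. This handles part~(2) when $k_t > 2$ and part~(1) whenever $\max(k_1, k_2) \ge 3$ (the one remaining part-(1) cycle being $X = C_4$ with $k_1 = k_2 = 2$, already covered by the bipartite argument). The only cyclic $X$ left is $Y = K_{2, \ldots, 2}$ with $t \ge 3$ (the ``$\gcd > 1$'' branch of part~(2)). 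For this I intend to use the natural $2^t$-sheeted graph covering $\pi \colon \FS(C_{2t}, K_{2,\ldots,2}) \to \mathcal{B}$, where $\mathcal{B}$ has as vertices the colourings of $V(C_{2t})$ by $\{1, \ldots, t\}$ using each colour exactly twice, with an edge for each swap of two adjacent vertices of distinct colours; the fibre over a colouring is $(\mathbb{Z}/2)^t$. A closed walk in $\mathcal{B}$ lifts to a path in $\FS$ realising an element $\rho = (\epsilon_1, \ldots, \epsilon_t) \in (\mathbb{Z}/2)^t$, and since $\rho$ is a product of as many transpositions of $V(C_{2t})$ as the length of the walk, one has $\sum_i \epsilon_i \equiv (\text{length}) \pmod 2$. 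The point is that $\mathcal{B}$ is bipartite: associating to a colouring $c$ the perfect matching $M_c$ on $V(C_{2t})$ joining the two positions of each colour, a swap across an edge of $C_{2t}$ changes the number of crossing pairs of chords of $M_c$ by exactly $\pm 1$, so the parity of that number is a proper $2$-colouring of $\mathcal{B}$. Thus every closed walk in $\mathcal{B}$ has even length, the monodromy lies in the index-two ``even'' subgroup of $(\mathbb{Z}/2)^t$, the fibre splits into at least two orbits, and $\FS$ is disconnected.

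The genuine difficulty is the last feature: $X$ contains a $k_1$-bridge (part~(1)) or an $(n - k_t)$-bridge (part~(2)). Write $P_t$ for the largest part of $Y$ and $m = n - k_t$ for the number of vertices of $Y$ outside $P_t$, which is exactly the length of the relevant bridge $v_1, \ldots, v_m$. Deleting the bridge edges from $X$ leaves, as its components, the interior of the bridge together with a ``left room'' $L^\circ$ (the rest of $v_1$'s side) and a ``right room'' $R^\circ$ (the rest of $v_m$'s side), with $|L^\circ| + |R^\circ| = k_t$; and the bridge is single-lane, so a vertex of $Y$ in $P_t$ moves between the two rooms only by walking along $v_1, \ldots, v_m$ one step at a time, each step swapping with a vertex of $Y$ outside $P_t$. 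When $k_t = 2$ the rooms are single vertices and $X$ is forced to be a path (or, in part~(1), bipartite), so this reduces to cases already treated; the remaining case is $k_t \ge 3$, where at least one room has $\ge 2$ vertices. Starting from a ``saturated'' configuration — all $m$ vertices of $Y$ outside $P_t$ on the bridge, all $k_t$ vertices of $P_t$ in the two rooms — I plan to show that the cyclic order in which the vertices of $P_t$ appear around a cycle through $v_1$ inside $L$, together with the analogous datum for $R$ and an overall sign, is preserved by every friendly swap, so two saturated configurations differing by transposing two vertices of $P_t$ in the same room lie in different components. This is where I expect the work to concentrate: between saturated states the configuration can be modified extensively — rooms unplugged, vertices of $Y$ shuffled within each room and within the corridor, and ferried repeatedly back and forth across the bridge — so the invariant must be engineered to survive all of this, and verifying its invariance step by step is precisely the part that generalises Wilson's treatment of cut vertices ($1$-bridges) for $\FS(X, \mathsf{Star}_n)$; the case $k_1 = 2$ (a non-trivial cut edge with $t = 2$) is already available from Wang, Lu, and Chen and serves as a base case. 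Granting this bridge lemma, part~(3) follows from the path case and parts~(1) and~(2) are assembled from all the pieces above.
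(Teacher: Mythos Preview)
Your invariants for the easy features (disconnected, bipartite, path, and cycles with a part of size $\ge 3$) are all correct, and your covering argument for $\FS(C_{2t},K_{2,\ldots,2})$ is a pleasant alternative: the crossing-parity really does $2$-colour your base graph $\mathcal{B}$, since swapping adjacent positions $i,i{+}1$ of colours $a\ne b$ changes only the $a$--$b$ crossing and always flips it. The paper, however, does not build any of these invariants by hand. Its proof is a one-line assembly of known lemmas: disconnectedness of $\FS$ when either factor is disconnected (via $\FS(K_n,Y)$), the Defant--Kravitz bipartite lemma, a single group-theoretic computation giving $\gcd(k_1,\ldots,k_t)\prod(k_i-1)!$ components for $\FS(C_n,K_{k_1,\ldots,k_t})$ uniformly, the path lemma, and Miloj\v{e}vi\'c's bridge lemma. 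So most of your work, while correct, is re-proving results already available.

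The real gap is the bridge case. You explicitly defer it (``granting this bridge lemma'') and sketch an invariant based on a cyclic order of the $P_t$-vertices around some cycle in each room plus a sign. This is where your plan is shakiest: for arbitrary rooms $L^\circ$ and $R^\circ$ there is no canonical cycle, and it is not clear how your proposed datum survives swaps internal to a room; making this precise would essentially re-prove a substantial piece of Wilson/Miloj\v{e}vi\'c from scratch. The paper bypasses all of this by invoking Miloj\v{e}vi\'c's lemma directly: if $X$ contains a $k$-bridge and $Y$ is not $(k{+}1)$-connected, then $\FS(X,Y)$ is disconnected. Here one simply observes that deleting the $n-k_t$ vertices of $K_{k_1,\ldots,k_t}$ outside the largest part leaves an independent set of size $k_t\ge 2$, so $K_{k_1,\ldots,k_t}$ is not $(n-k_t+1)$-connected, and the $(n-k_t)$-bridge (respectively $k_1$-bridge when $t=2$) hypothesis finishes the job. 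Replacing your sketched room-invariant with this citation closes the gap immediately.
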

The first case above is proved by Wang, Lu, and Chen in \cite{wang2023}. The next theorem takes care of the ``if'' direction for case (2) of \Cref{thm_main} and the ``if'' directions for cases (3) and (4) with the additional assumption that $X$ is not a tree. 
\begin{theorem}\label{thm_multi_connected}
    Suppose $t \geq 2$ and $1 \leq k_1 \leq \cdots \leq k_t$, where $n = k_1 + \cdots + k_t \geq 4$. Let $X$ be a graph on $n$ vertices. The following hold:
    \begin{enumerate}
        \item Suppose $t = 2$ and $k_1 \geq 2$. Then $\FS(X, K_{k_1, n - k_1})$ is connected if $X$ is connected, is non-bipartite, is not a cycle, and does not contain a $k_1$-bridge.
        \item Suppose $t > 2$ and either $k_t > 2$ or $\gcd(k_1, \ldots, k_t) > 1$. Then $\FS(X, K_{k_1, \ldots, k_t})$ is connected if $X$ is connected, is not a cycle, is not a tree, and does not contain an $(n - k_t)$-bridge.
        \item Suppose $t > 2$, $k_1 = 1$, and $k_t = 2$. Then $\FS(X, K_{k_1, \ldots, k_t})$ is connected if $X$ is connected, is not a cycle, and is not a tree. 
    \end{enumerate} 
\end{theorem}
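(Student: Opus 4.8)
The plan is to treat the three cases in parallel by viewing a vertex of $\FS(X, K_{k_1,\dots,k_t})$ as a placement of $n$ labeled tokens on $V(X)$ in which each token carries one of $t$ colors, with exactly $k_i$ tokens of color $i$, and where a legal move swaps the tokens on two $X$-adjacent vertices precisely when their colors differ. Connectivity then splits into two tasks: (A) every color pattern $V(X)\to[t]$ with the prescribed fibre sizes is reachable from every other; and (B) once a color pattern is fixed, every permutation of the labels within the color classes is realizable by a closed walk. Task (A) is a pebble-motion statement on the connected graph $X$ with at least two colors present, which I would settle by a routing argument along a spanning tree; the care needed is only to perform this routing so as to retain enough control for (B).

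The reduction step is to pass to the case that $X$ is $2$-connected. I would use the block--cut tree of $X$: in cases (2) and (3) we assume $X$ is not a tree (in case (1), non-bipartiteness already forces an odd cycle, hence a $2$-connected block), so $X$ has a $2$-connected block, and I induct by peeling off a pendant block $B$ attached at a cut-vertex $v$. If $B$ is $2$-connected, one parks a suitable sub-multiset of tokens in $B$, applies the inductive hypothesis to the smaller graph $X-(B-v)$, and finishes by sorting $B$ via the $2$-connected base case; the point where the hypotheses must be checked to be inherited — in particular ``no $(n-k_t)$-bridge'' — is exactly that a long pendant path of $B$ glued to a long internal bridge of $X-(B-v)$ would reassemble into a forbidden bridge of $X$. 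If instead $B$ is a bridge-block, one feeds tokens through it one at a time, and the bridge bound guarantees the corridor is short enough that the cyclic order of the remaining tokens is not frozen.

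The core is the $2$-connected base case: for $X$ that is $2$-connected and not a cycle (and non-bipartite in case (2)), $\FS(X, K_{k_1,\dots,k_t})$ is connected. A $2$-connected non-cycle graph contains a theta-subgraph, i.e.\ two cycles sharing a path, and this produces a $3$-cycle of tokens whenever the three relevant vertices are not monochromatic; using (A) to first arrange the colors appropriately, these $3$-cycles together with the connectivity of $X$ generate the alternating group on the labels, hence all even within-class permutations. To obtain the odd permutations, in case (2) I would exhibit an even cycle in $X$ (available from non-bipartiteness by a short argument) giving an odd token-cycle; in cases (3) and (4) one instead uses a color class of size $\ge 2$, since swapping two tokens of the same color is ``invisible,'' so a permutation that is odd on positions but trivial modulo same-color identifications costs nothing — this is precisely why case (4), with its genuine singleton blank and a part of size $2$, needs no non-bipartiteness hypothesis. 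The graph $\theta_0$ and the finitely many small cases with $n$ close to $4$ would be dispatched by direct inspection, and Wilson's theorem (case (1) of \Cref{thm_main}) together with the $K_{2,n-2}$ result of Wang--Lu--Chen can be cited as black boxes where they shorten the argument.

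I expect the main obstacle to be this base case together with the bridge bookkeeping: establishing transitivity of the achievable within-class permutations under the exact hypotheses (the sign/parity analysis and the $\theta_0$-type exceptions), and then verifying, at every step of the induction on the block--cut tree, that ``no $(n-k_t)$-bridge'' — no more, no less — is what prevents a pendant block or an internal corridor from imposing a conserved cyclic order of tokens.
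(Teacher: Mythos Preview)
Your plan diverges substantially from the paper's proof. The paper does not induct on the block--cut tree at all: instead it uses \Cref{prop_exchange} to reduce connectivity of $\FS(X,K_{k_1,\dots,k_t})$ to showing that any two same-class vertices $i,j$ are $(X,K_{k_1,\dots,k_t})$-exchangeable from any $\sigma$, and then \emph{localizes} to a single ``stopwatch'' subgraph $W$ (a minimal cycle plus one pendant edge). \Cref{prop_populate_cycle} shows one can route the relevant tokens into $W$ using only the no-$(n-k_t)$-bridge hypothesis, and \Cref{thm_stopwatch} finishes by a self-contained computation in $S_{k_t}\times S_{n-k_t}$ (\Cref{prop_alpha_beta}). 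This avoids all bookkeeping about whether hypotheses descend to subgraphs.

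Your proposal, by contrast, has two genuine gaps. First, the block--cut tree induction is not set up correctly: when you peel off a pendant block $B$ at $v$ and pass to $X' = X - (B - v)$, you are now on $n' < n$ vertices, so you must specify \emph{which} complete multipartite graph $K_{k'_1,\dots,k'_{t'}}$ plays the role of $Y'$, and then verify that $X'$ satisfies the corresponding bridge bound and (in case~(1)) non-bipartiteness. You mention the bridge bound but not non-bipartiteness, and indeed the unique odd cycle of $X$ may lie entirely inside $B$, so $X'$ can be bipartite; the induction as stated then fails. Second, your parity analysis in the $2$-connected base case is muddled. You write ``even cycle in $X$ (available from non-bipartiteness)'', but non-bipartiteness yields an \emph{odd} cycle, and it is precisely the odd cycle that breaks the sign invariant of \Cref{lem_bipartite_disconnected}; moreover, you never say how a closed walk in $\FS$ that returns to the same color pattern realizes an odd element of $S_{k_1}\times S_{k_2}$, which is the actual content of Task~(B). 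The paper's \Cref{prop_alpha_beta} is exactly the computation that pins this down, and its case analysis (including the $k=\ell=4$ exception) shows this step is not a formality. Your case numbering (you refer to cases (2), (3), (4) of a three-case theorem) also suggests you are conflating \Cref{thm_multi_connected} with \Cref{thm_main}.
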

The final theorem takes care of the ``if'' direction for cases (3) and (4) assuming $X$ is a tree.
\begin{theorem}\label{thm_tree}
    Suppose $t > 2$ and $1 \leq k_1 \leq \cdots \leq k_t$, where $n = k_1 + \cdots + k_t \geq 4$. Let $X$ be a graph on $n$ vertices. Then $\FS(X, K_{k_1, \ldots, k_t})$ is connected if $X$ is a tree and does not contain an $(n - k_t)$-bridge.
\end{theorem}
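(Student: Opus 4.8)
The plan is to induct on $n$, combining a ``slab'' reduction with a token-routing lemma. First we may assume $k_t \geq 2$: otherwise $Y = K_n$, and $\FS(X, K_n)$ is connected for every connected $X$. The base cases are $n \in \{4, 5\}$, where the admissible trees are severely restricted (for $n = 4$ only $X = K_{1,3}$, since $P_4$ contains a $2$-bridge), and connectivity is checked by hand. For the inductive step, fix a leaf $\ell$ of $X$ with neighbour $v$. The configurations of $\FS(X, Y)$ in which a fixed token $y$ occupies $\ell$ induce a subgraph isomorphic to $\FS(X - \ell, Y - y)$, and $Y - y$ is again complete multipartite. I would use the following soft reduction: if there is a token $y_0$ such that (i) $\FS(X - \ell, Y - y_0)$ is connected and (ii) from every configuration of $\FS(X, Y)$ one can reach a configuration with $y_0$ at $\ell$, then $\FS(X, Y)$ is connected --- any two configurations can first be driven into the connected slab of configurations fixing $y_0$ at $\ell$.

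Establishing (ii) is the crux, and I would isolate it as a lemma: \emph{from any configuration and for any token $z$, one can reach a configuration placing $z$ at any prescribed vertex.} The idea is to push $z$ along the unique path of $X$ towards its target; a step is a friendly swap unless the next vertex holds a token from the same part $P_j$ as $z$, in which case one first evicts that blocker into a side branch. The hypothesis $t \geq 3$ keeps such evictions legal, since there is always a token from a third part to serve as a temporary placeholder; the hypothesis that $X$ is not a path --- equivalently, given $k_t \geq 2$, that $X$ has no $(n - k_t)$-bridge --- prevents a chain of evictions from running into a ``saturated'' pendant bare path too long for the $P_j$-tokens to be absorbed. Making this eviction process precise and bounding it against the no-$(n - k_t)$-bridge hypothesis is the main technical obstacle.

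For (i) one must arrange the reduction so the smaller instance still satisfies the hypotheses. Choosing $y_0$ in the largest part $P_t$ keeps $Y - y_0 = K_{k_1, \dots, k_{t-1}, k_t - 1}$ with $t > 2$ parts --- crucial, since $\FS$ of a bipartite graph (such as any tree) with a complete \emph{bipartite} graph is disconnected. The bridge threshold of the reduced instance is $n - k_t$ when $k_t > k_{t-1}$, but drops to $n - 1 - k_t$ when $k_t = k_{t-1}$. One then needs the graph-theoretic fact that a tree with no $m$-bridge has a leaf whose deletion preserves ``no $m$-bridge'': deleting the leaf at the far end of a longest pendant bare path works, except when every leaf hangs off a degree-$3$ vertex (a ``balanced'' tree, e.g. a complete binary tree). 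In that remaining case, and when $k_t = k_{t-1}$ forces the threshold to strictly drop, I would instead delete a pendant cherry (two leaves with a common neighbour), removing simultaneously one token from each of the two largest parts of $Y$; one checks the hypotheses survive, and the argument runs with the two-leaf slab, (ii) being replaced by the analogous statement that two prescribed tokens can be routed to two prescribed leaves. A short case analysis on the shape of $X$ decides which deletion to perform, and this is the other main source of work.

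Finally, the induction yields genuine connectivity rather than merely ``at most two components'': the inductive hypothesis hands back a connected slab, and the parity obstruction that disconnects $\FS(X, K_{k_t, n - k_t})$ for bipartite $X$ is dissolved by the extra room afforded by $t \geq 3$ parts --- the same feature exploited in the routing lemma. (An alternative to keep in mind: $\FS(X, K_{k_t, n - k_t})$ is a spanning subgraph of $\FS(X, Y)$, and one could try to merge its components using the swaps internal to $P_1 \cup \dots \cup P_{t-1}$; but controlling the number of components of $\FS(X, K_{k_t, n - k_t})$ when $k_t < n/2$ appears no easier than the direct approach above.)
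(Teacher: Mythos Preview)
Your approach is genuinely different from the paper's, and the architecture---induct on $n$ via a leaf-slab reduction, with a token-routing lemma supplying reachability of the slab---is a natural strategy. However, step (i) has a real gap. When $k_t = k_{t-1}$, removing $y_0 \in P_t$ leaves $Y - y_0$ with largest part still of size $k_t$, so the bridge threshold for the reduced instance is $(n-1) - k_t$, strictly smaller than the $n - k_t$ you have for $X$. Your proposed fix---delete a pendant cherry and one token from each of the two largest parts---does not help: the new instance has $n' = n - 2$ vertices and largest part $\max(k_{t-2},\, k_t - 1)$, and in either case the new threshold is at most $n - k_t - 1$. Since the maximum bridge length of $X$ minus a cherry need not decrease (the longest bridge of $X$ may lie far from every cherry), the reduced tree can fail the no-bridge hypothesis and the induction stalls. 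More broadly, when all $k_i$ are equal the threshold drops at almost every inductive step, and nothing in your scheme forces the tree's maximum bridge length to keep pace. A smaller slip: your parenthetical that ``$X$ is not a path'' is equivalent, for $k_t \geq 2$, to ``$X$ has no $(n-k_t)$-bridge'' is correct only when $k_t = 2$; for $k_t \geq 3$ the bridge condition is strictly stronger, and that extra strength is exactly what the routing argument needs to guarantee a fork among the empty spots.

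The paper avoids this difficulty entirely by not inducting on $n$. It uses the exchangeability reduction (Proposition~\ref{prop_exchange}): it suffices to show that any two tokens $i, j$ in the same part are exchangeable from every $\sigma$ with $\sigma^{-1}(i), \sigma^{-1}(j)$ adjacent. A two-token routing lemma (Lemma~\ref{lem_tree_move_anywhere}, close in spirit to your (ii) but moving a pair rather than a single token) brings $i, j$ to a leaf and its neighbour; one then pulls empty spots as close as possible, locates a ``snake tongue'' subgraph (a path ending in a fork) populated with $i$, $j$, and empty spots from at least two distinct parts (Proposition~\ref{prop_snake_tongue}), and performs the exchange locally inside it (Proposition~\ref{prop_y_exchangeable}). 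No smaller instance of the theorem is ever invoked, so the interaction between leaf deletion and the bridge threshold simply never arises. Your routing idea is sound and is essentially what drives the paper's argument too; the divergence is in how global connectivity is assembled from it.
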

The only missing case is when $X$ is a cycle in case (4) of \Cref{thm_main}. This is taken care of by \Cref{lem_cycle_components}. 

By case (2) of \Cref{thm_main}, $\FS(X, K_{k, n-k})$ has at least two connected components if $X$ is connected and bipartite. It is thus a natural question to consider when there are exactly two connected components.
\begin{theorem}\label{thm_two_comps}
    Suppose $n \geq 5$ and $n \geq 2k \geq 4$. Let $X$ be a connected bipartite graph on $n$ vertices that is not a cycle. Then $\FS(X, K_{k, n-k})$ has exactly two connected components if and only if $X$ does not contain a $k$-bridge and is not the graph $T_6$, $T_7$, or $T_8$ (see \Cref{fig_exceptions}).
    \begin{figure}[ht]
    \centering
    \begin{tikzpicture}
        \tikzstyle{vertex} = [circle, draw]
        \tikzstyle{edge} = [-]

        \node[vertex](1) at (-2, 0) {};
        \node[vertex](2) at (-1, 0) {};
        \node[vertex](3) at (0, 0) {};
        \node[vertex](4) at (1, 0) {};
        \node[vertex](5) at (0, -2) {};
        \node[vertex](6) at (0, -1) {};
        
        \draw[edge] (1)--(2)--(3)--(4);
        \draw[edge] (3)--(6)--(5);

        \node[vertex](1) at (-2+4.5, 0) {};
        \node[vertex](2) at (-1+4.5, 0) {};
        \node[vertex](3) at (0+4.5, 0) {};
        \node[vertex](4) at (1+4.5, 0) {};
        \node[vertex](5) at (2+4.5, 0) {};
        \node[vertex](6) at (0+4.5, -1) {};
        \node[vertex](7) at (0+4.5, -2) {};
        
        \draw[edge] (1)--(2)--(3)--(4)--(5);
        \draw[edge] (3)--(6)--(7);

        \node[vertex](1) at (-2+10, 0) {};
        \node[vertex](2) at (-1+10, 0) {};
        \node[vertex](3) at (0+10, 0) {};
        \node[vertex](4) at (1+10, 0) {};
        \node[vertex](5) at (2+10, 0) {};
        \node[vertex](6) at (3+10, 0) {};
        \node[vertex](7) at (0+10, -1) {};
        \node[vertex](8) at (0+10, -2) {};
        
        \draw[edge] (1)--(2)--(3)--(4)--(5)--(6);
        \draw[edge] (3)--(7)--(8);
    \end{tikzpicture}
    \caption{The three exceptions $T_6$, $T_7$, and $T_8$}
    \label{fig_exceptions}
\end{figure}
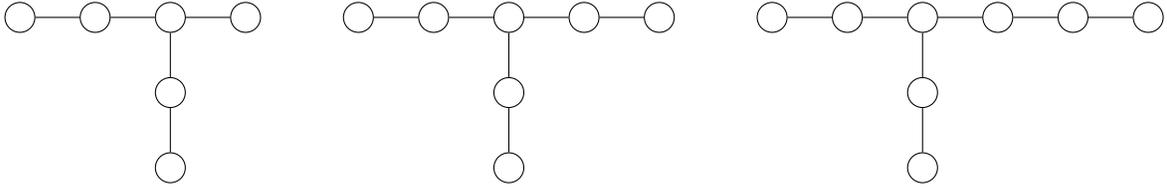
\end{theorem}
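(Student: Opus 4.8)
The plan is to exhibit a $\mathbb{Z}/2$-valued invariant that forces at least two components whenever $X$ is connected and bipartite, and then to pin down exactly when there are strictly more than two. I would fix a bipartition $V(X) = A \sqcup B$, write the two parts of $Y = K_{k,n-k}$ as $P$ (size $k$) and $Q$ (size $n-k$), fix reference orderings of $V(X)$ and $V(Y)$, and let $\operatorname{sgn}(\sigma) \in \{\pm 1\}$ be the sign of $\sigma$ as a permutation and $m(\sigma) = |\{v \in A : \sigma(v) \in P\}|$. The claim is that $\varepsilon(\sigma) := (-1)^{m(\sigma)}\operatorname{sgn}(\sigma)$ is invariant under $(X,Y)$-friendly swaps: such a swap occurs across an edge $\{a,b\}$ with, say, $a \in A$ and $b \in B$, so it multiplies $\operatorname{sgn}$ by $-1$ and changes $m$ by exactly $\pm 1$ (it moves a $P$-element off of, or onto, the single $A$-vertex $a$). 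Since $n \geq 5$ the larger of $A, B$ has at least two vertices, and transposing two such vertices flips $\operatorname{sgn}$ without changing $m$, so $\varepsilon$ attains both values; hence $\FS(X, K_{k,n-k})$ always has at least two components. It then remains to prove (i) that there are more than two components when $X$ contains a $k$-bridge or $X \in \{T_6, T_7, T_8\}$, and (ii) that each $\varepsilon$-class is connected otherwise.

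For the ``only if'' direction (i), in the $k$-bridge case I would reuse the obstruction behind \Cref{thm_multi_disconnected} (originally from \cite{wang2023}): a $k$-bridge lets one track how the $P$-elements are distributed relative to the bridge, producing an invariant that strictly refines $\varepsilon$, hence at least three components; the work is to verify this refinement is genuine. For $X \in \{T_6, T_7, T_8\}$ — which are spiders on $6, 7, 8$ vertices — the smaller values of $k$ are already excluded because each $T_i$ contains a $2$-bridge, so only $k = 3$ (and $k = 4$ for $T_8$) need attention; for each such pair $(X, k)$ I would exhibit an explicit extra invariant, such as a weighted count of $P$-elements along the three legs taken modulo a suitable integer, or else certify the component count by the (very small) exhaustive check.

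For the ``if'' direction (ii), I would pick an edge $e = \{u, v\}$ joining two vertices in the same part of $(A, B)$ such that $X^+ := X + e$ is not a cycle and has no $k$-bridge. Such an $e$ exists because adding an intra-part edge destroys no relevant structure except possibly leaving an old near-$k$-bridge of $X$ intact, and the hypotheses that $X$ is connected, bipartite, not a cycle, and not $T_6, T_7, T_8$ let one dodge these in a short case analysis (the delicate cases being $X$ a path or a near-path). Since $X$ is connected, $X^+$ is non-bipartite (an even $X$-path from $u$ to $v$ together with $e$ is an odd cycle), so \Cref{thm_main}(2) makes $\FS(X^+, K_{k,n-k})$ connected. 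Given $\sigma, \tau$ with $\varepsilon(\sigma) = \varepsilon(\tau)$, I would take a walk between them in $\FS(X^+, K_{k,n-k})$; since each swap across $e$ flips $\varepsilon$ (it is a transposition that does not change $m$, as $u, v$ lie in the same part) while $X$-swaps preserve $\varepsilon$, the walk uses an even number of $e$-swaps, and by induction on that number it suffices to replace any walk using exactly two $e$-swaps by one using only $X$-swaps with the same endpoints.

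That replacement is the main obstacle. After the first $e$-swap (which exchanges a $P$-element and a $Q$-element at $u$ and $v$), I would follow the intervening $X$-moves up to the second $e$-swap and use the $k$-bridge-free condition to find enough ``room'' near $e$ — essentially a short even cycle through $u$ and $v$ together with enough spare $Q$-elements — to perform a single legal cyclic rotation inside $X$ realizing the same net effect on both $Y$-configurations, thereby absorbing both cheating moves. This is exactly where $T_6, T_7, T_8$ break: their legs are too short to carry out the rotation given the number of available $Q$-elements. The hardest part will be showing these three are the \emph{only} exceptions rather than three among possibly more; I expect this to require a minimal-counterexample argument proving that every other admissible $X$ contains the substructure the rotation needs, assembled carefully from the ``not a cycle'' and ``no $k$-bridge'' hypotheses. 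I would also check that the boundary conditions $n \geq 5$ and $n \geq 2k \geq 4$ are used precisely where $\varepsilon$ takes both values and where the small spiders are classified.
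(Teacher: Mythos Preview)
Your invariant $\varepsilon$ is exactly the right one (it is the parity in \Cref{lem_two_components}), and your outline of the ``only if'' direction is serviceable, though the paper handles the $k$-bridge case more slickly: rather than constructing a refining invariant, it adds the single edge $\{1,2\}$ to $K_{k,n-k}$ to form $K'$, observes that $K'$ is a spanning subgraph of $K_{1,k-1,n-k}$, and then uses \Cref{thm_main}(3) to conclude $\FS(X,K')$ is disconnected; since the $\{1,2\}$-swap changes $\varepsilon$, the original graph must have strictly more components than $\FS(X,K')$.

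The substantive divergence is in the ``if'' direction, and here your plan has a genuine gap. You modify $X$ by adding an intra-part edge $e=\{u,v\}$ and appeal to \Cref{thm_main}(2) for $X^{+}$; the paper instead modifies $Y$ by adding $\{1,2\}$ and proves directly (\Cref{thm_K'}, via the long case analysis of \Cref{thm_two_comps_reduction}) that $\FS(X,K')$ is connected. The reason the paper's choice works and yours does not, at least not as stated, is that $1$ and $2$ are \emph{twins} in $K_{k,n-k}$ (same neighbourhood), so any $(X,K')$-walk can be replayed in $\FS(X,K_{k,n-k})$ by simply skipping the $\{1,2\}$-swaps, landing at either $\tau$ or $(1\,2)\circ\tau$; this immediately gives at most two components. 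There is no analogous twin structure for $u$ and $v$ in $X$: the map $\mu\mapsto\mu\circ(u\,v)$ is an automorphism of $\FS(X,K_{k,n-k})$ only when $(u\,v)$ is an automorphism of $X$, which you cannot arrange in general. Consequently your ``replace any walk using exactly two $e$-swaps by one using only $X$-swaps'' is not a reduction at all: since $\FS(X^{+},K_{k,n-k})$ is connected, \emph{every} pair $\sigma,\tau$ with $\varepsilon(\sigma)=\varepsilon(\tau)$ admits such a two-$e$-swap walk, so the statement you propose to prove is equivalent to the full claim. The extra structure you have---that the ``inner'' endpoints $\sigma'=\sigma\circ(u\,v)$ and $\tau'=\tau\circ(u\,v)$ lie in the same $X$-component---does not transport across precomposition by $(u\,v)$ for the reason just given.

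Your suggested mechanism for the replacement (``a short even cycle through $u$ and $v$'') also cannot work uniformly: when $X$ is a tree there is no cycle through $u$ and $v$ at all, and the tree case is exactly where the exceptions $T_6,T_7,T_8$ live and where the paper spends most of its effort. So as written, the proposal defers the entire content of the theorem to an unproved replacement lemma that is at least as hard as what you are trying to show; the paper's $K'$ trick is what converts the problem into a tractable exchangeability statement.
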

The result for $k = 2$ was conjectured by Wang, Lu, and Chen in \cite{wang2023}.

Seemingly unaware of the work of Wang, Lu, and Chen in \cite{wang2023}, Brunck and Kwan solved a similar problem in \cite{brunck2023}. Instead of the complete bipartite graph $K_{k, n - k}$, they consider the book graph $B_{k, n - k}$, which can be obtained from $K_{k, n - k}$ by adding all the edges between pairs of vertices in the size-$k$ class of the vertex bipartition. Observe that $B_{k, n - k}$ is exactly the complete $(k+1)$-partite graph $K_{1, \ldots, 1, n-k}$. 

\begin{theorem}[\cite{brunck2023}]\label{thm_book}
    Let $X$ be a graph on $n \geq 4$ vertices. Define $\kappa(X)$ as follows:
    \begin{enumerate}
        \item if $X$ is disconnected, $\kappa(X) = \infty$;
        \item if $X$ is a cycle, $\kappa(X) = n - 2$;
        \item if $X$ is bipartite, not a cycle, and does not have a cut-vertex, $\kappa(X) = 2$;
        \item if $X$ is the $\theta_0$ graph (see \Cref{fig_theta_0}), $\kappa(X) = 2$;
        \item otherwise, $\kappa(X)$ is the smallest integer $k$ such that $X$ contains no $k$-bridge. 
    \end{enumerate}
    Then the smallest integer $k$ such that $\FS(X, B_{k, n - k})$ is connected is exactly $\kappa(X)$.
\end{theorem}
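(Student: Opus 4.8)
The plan is to deduce \Cref{thm_book} from \Cref{thm_main} together with the standard fact that $\FS(X, K_n)$ is connected if and only if $X$ is connected (see \cite{defant2021}). The key identification is that the book graph $B_{k,n-k}$ is exactly the complete multipartite graph $K_{1,\ldots,1,n-k}$ with $k$ classes of size $1$ and one class of size $n-k$; in the notation of \Cref{thm_main} this is $K_{k_1,\ldots,k_t}$ with $t = k+1$, $k_1 = \cdots = k_k = 1$, and $k_t = n-k$, so that $n - k_t = k$. Hence for $k$ in the relevant range $1 \le k \le n-1$ exactly one of the following applies: $k = 1$ is the star, governed by case (1) of \Cref{thm_main}; $2 \le k \le n-3$ has $t > 2$ and $k_t = n-k > 2$, governed by case (3); $k = n-2$ has $t > 2$, $k_1 = 1$, and $k_t = 2$, governed by case (4); and $k = n-1$ gives $B_{n-1,1} = K_n$, governed by the standard fact. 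Monotonicity also holds: identifying vertex sets so that one page vertex moves onto the spine exhibits $B_{k,n-k}$ as a spanning subgraph of $B_{k+1,n-k-1}$, and a spanning-subgraph relation in the second coordinate descends to friends-and-strangers graphs, so connectedness is monotone in $k$. It therefore suffices to check, for each graph $X$, that $\FS(X, B_{\kappa(X),\,n-\kappa(X)})$ is connected and that $\FS(X, B_{k,n-k})$ is disconnected whenever $k < \kappa(X)$.

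I would first clear the four exceptional cases in the definition of $\kappa$. If $X$ is disconnected, then cases (1), (3), (4) of \Cref{thm_main} and the $K_n$ fact each require $X$ to be connected, so $\FS(X, B_{k,n-k})$ is disconnected for every $k$, matching $\kappa(X) = \infty$. If $X$ is a cycle, then for $1 \le k \le n-3$ cases (1) and (3) fail because they forbid $X$ being a cycle, while for $k = n-2$ the pair $(C_n, K_{1,\ldots,1,2})$ is precisely the cycle instance of part (4), which \Cref{lem_cycle_components} declares connected; so the threshold is $n-2 = \kappa(X)$. If $X$ is bipartite, is not a cycle, and has no cut-vertex, or if $X = \theta_0$, then at $k = 1$ case (1) gives disconnectedness (because $X$ is bipartite in the first situation and $\theta_0$ is a listed exception in the second), while at $k = 2$ case (3) applies (note that when $X$ is bipartite without a cut-vertex this case is vacuous unless $n \ge 5$) and all its hypotheses hold: $X$ is connected, is not a cycle, and has no $2$-bridge, the last because an endpoint of a non-trivial cut-edge would be a cut-vertex, which neither a $2$-connected graph nor $\theta_0$ possesses. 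So the threshold is $2 = \kappa(X)$.

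It remains to handle case (5), where $X$ is connected, is not a cycle, is not $\theta_0$, and is non-bipartite or has a cut-vertex. The one structural fact I would establish is that $\{\,k : X \text{ has a } k\text{-bridge}\,\}$ is an initial segment $\{1,\ldots,M\}$ with $M \le n-2$, so $\kappa(X) = M+1 \le n-1$: deleting the last vertex of a $k$-bridge yields a $(k-1)$-bridge, an endpoint of a $k$-bridge with $k \ge 2$ is a cut-vertex, and the only $n$-vertex graph admitting an $(n-2)$-bridge is the path $P_n$. Given this, for $k < \kappa(X)$ the graph $X$ has a $k$-bridge, i.e.\ an $(n-k_t)$-bridge, and disconnectedness of $\FS(X, B_{k,n-k})$ follows from case (1) of \Cref{thm_main} if $k = 1$, from case (3) if $2 \le k \le n-3$, and from case (4) if $k = n-2$ (for then $\kappa(X) = n-1$, forcing $X = P_n$, which part (4) excludes). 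For $k = \kappa(X)$ the graph $X$ has no $k$-bridge, and connectedness of $\FS(X, B_{k,n-k})$ follows from case (1) if $\kappa(X) = 1$ (then $X$ has no cut-vertex, so by the case hypothesis it is non-bipartite, and it is not a cycle or $\theta_0$), from case (3) if $2 \le \kappa(X) \le n-3$, from case (4) if $\kappa(X) = n-2$ (then $X$ is connected and not a path), and from the $K_n$ fact if $\kappa(X) = n-1$ (then $X = P_n$). Combining the two halves, the least $k$ with $\FS(X, B_{k,n-k})$ connected is exactly $\kappa(X)$.

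Because the substantive connectivity analysis is already contained in \Cref{thm_main} and in \Cref{lem_cycle_components}, the only real work is bookkeeping at the ends of the parameter range: lining up the five-way definition of $\kappa$ with the four cases of \Cref{thm_main} plus the $K_n$ endpoint, handling $k = n-2$ through the path/cycle dichotomy of part (4) --- and, when $X$ is a cycle, through the separate input \Cref{lem_cycle_components} --- and verifying the initial-segment behaviour of bridges so that the thresholds coincide. This is where I expect the (mild) difficulty to lie; beyond it the argument is a direct case check.
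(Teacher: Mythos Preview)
Your argument is correct and is essentially a detailed expansion of the paper's one-line proof, which simply cites \Cref{thm_main}, \Cref{lem_cycle_components}, and \Cref{lem_k_bridge_disconnected}. You route everything through \Cref{thm_main} (whose ``only if'' directions already absorb \Cref{lem_k_bridge_disconnected}) and add the spanning-subgraph monotonicity of $B_{k,n-k}$ in $k$, which is convenient but not strictly necessary; otherwise the case split and the bridge bookkeeping match what the paper intends.
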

This follows easily from \Cref{thm_main}, \Cref{lem_cycle_components}, and \Cref{lem_k_bridge_disconnected}.

The rest of the paper is organized as follows. In \Cref{sec_prelim}, we recall some preliminary results and prove \Cref{thm_multi_disconnected}. In \Cref{sec_not_tree}, we prove \Cref{thm_multi_connected} assuming a fact, which we prove in \Cref{sec_stopwatch}. In \Cref{sec_tree}, we prove \Cref{thm_tree}. In \Cref{sec_two_comps}, we prove \Cref{thm_two_comps}. In \Cref{sec_conclusion}, we discuss potential problems for future research. 

\subsection*{Acknowledgements}
This work was done at the University of Minnesota Duluth with support from Jane Street Capital, the National Security Agency, and the CYAN Undergraduate Mathematics Fund at MIT. The author would like to thank Joe Gallian and Colin Defant for organizing the Duluth REU and providing this great research opportunity. The author also thanks Noah Kravitz and Ryan Jeong for providing helpful comments that improved this paper. 

\section{Preliminaries}\label{sec_prelim}
First, we recall some basic properties and definitions of friends-and-strangers graphs. 
\begin{lemma}[\cite{defant2021}]
    For $X, Y$ on $n$ vertices, $\FS(X, Y)$ is isomorphic to $\FS(Y,X)$. 
\end{lemma}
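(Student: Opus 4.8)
The plan is to exhibit an explicit graph isomorphism given by sending a bijection to its inverse. Concretely, I would define a map $\varphi \colon V(\FS(X,Y)) \to V(\FS(Y,X))$ by $\varphi(\sigma) = \sigma^{-1}$. Since a bijection $\sigma \colon V(X) \to V(Y)$ has a well-defined inverse $\sigma^{-1} \colon V(Y) \to V(X)$, and the assignment $\tau \mapsto \tau^{-1}$ sends bijections $V(Y) \to V(X)$ back to bijections $V(X) \to V(Y)$, the map $\varphi$ is a bijection on vertex sets with inverse $\tau \mapsto \tau^{-1}$. So the only real content is checking that $\varphi$ preserves adjacency in both directions.

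The key step is the adjacency computation. Suppose $\sigma$ and $\sigma'$ are adjacent in $\FS(X,Y)$, so there is an edge $\{a,b\} \in E(X)$ with $\{\sigma(a),\sigma(b)\} \in E(Y)$, and with $\sigma(a) = \sigma'(b)$, $\sigma(b) = \sigma'(a)$, and $\sigma(c) = \sigma'(c)$ for all $c \in V(X) \setminus \{a,b\}$. Write $x = \sigma(a)$ and $y = \sigma(b)$. Then $\{x,y\} \in E(Y)$ and $\{\sigma^{-1}(x), \sigma^{-1}(y)\} = \{a,b\} \in E(X)$. Moreover $\sigma^{-1}(x) = a = \sigma'^{-1}(y)$ and $\sigma^{-1}(y) = b = \sigma'^{-1}(x)$, while for any $z \in V(Y) \setminus \{x,y\}$, writing $z = \sigma(c)$ with $c \notin \{a,b\}$ gives $\sigma^{-1}(z) = c = \sigma'^{-1}(z)$ since $\sigma$ and $\sigma'$ agree off $\{a,b\}$. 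Hence $\sigma^{-1}$ and $\sigma'^{-1}$ differ exactly by an $(Y,X)$-friendly swap across $\{x,y\}$, so they are adjacent in $\FS(Y,X)$.

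Finally, the reverse implication is automatic: applying the same argument with the roles of $(X,Y)$ and $(Y,X)$ interchanged shows that if $\sigma^{-1}$ and $\sigma'^{-1}$ are adjacent in $\FS(Y,X)$ then $\sigma$ and $\sigma'$ are adjacent in $\FS(X,Y)$. Therefore $\varphi$ is an isomorphism of graphs. I do not anticipate any genuine obstacle here; the statement is essentially a matter of unwinding the definition of an $(X,Y)$-friendly swap and observing it is symmetric under inversion of the bijection, so the ``proof'' is really just the bookkeeping above carried out carefully.
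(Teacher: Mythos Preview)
Your argument is correct and is exactly the standard proof: the map $\sigma \mapsto \sigma^{-1}$ is a bijection on vertex sets, and your bookkeeping verifying that adjacency is preserved is accurate. The paper does not actually supply a proof of this lemma; it is quoted from \cite{defant2021}, where the same inversion map is used, so your approach matches the original.
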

\begin{lemma}[\cite{godsil2001}]\label{lem_complete}
    Let $K_n$ be the complete graph on $n$ vertices and $Y$ a graph on $n$ vertices. Then $\FS(K_n, Y)$ is connected if and only if $Y$ is connected. 
\end{lemma}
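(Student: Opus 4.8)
The plan is to first pass to $\FS(Y, K_n)$ via the symmetry $\FS(K_n, Y) \cong \FS(Y, K_n)$ recalled above, because there the friendly-swap condition becomes transparent: a bijection $\sigma : V(Y) \to V(K_n)$ is adjacent to $\sigma'$ precisely when $\sigma'$ is obtained from $\sigma$ by exchanging the values assigned to the two endpoints of some edge $\{u,v\} \in E(Y)$, the adjacency requirement coming from $K_n$ being vacuous. Thinking of $\sigma$ as a placement of $n$ distinct tokens on the vertices of $Y$, a move swaps the tokens sitting on any two adjacent vertices of $Y$, with no further restriction; in particular every such swap is available from every configuration.

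Next I would make the group-theoretic reformulation precise. If $\tau_{uv} \in S_{V(Y)}$ denotes the transposition interchanging $u$ and $v$, then the swap across $\{u,v\}$ sends $\sigma$ to $\sigma \circ \tau_{uv}$. Hence, starting from any fixed bijection $\sigma_0$, the set of bijections reachable by a sequence of swaps is exactly the coset $\sigma_0 \cdot \langle \mathcal{T}_Y \rangle$, where $\mathcal{T}_Y = \{\tau_{uv} : \{u,v\} \in E(Y)\}$ and $\langle \mathcal{T}_Y \rangle \le S_{V(Y)}$ is the subgroup it generates. Since the vertex set of $\FS(Y, K_n)$ is in bijection with all of $S_{V(Y)}$, and every connected component is such a coset (so all have size $|\langle \mathcal{T}_Y\rangle|$), $\FS(Y, K_n)$ is connected if and only if $\mathcal{T}_Y$ generates the full symmetric group $S_{V(Y)}$.

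It then remains to establish the classical fact that $\mathcal{T}_Y$ generates $S_{V(Y)}$ if and only if $Y$ is connected. For the ``only if'' direction, if $C \subsetneq V(Y)$ is the vertex set of a connected component of $Y$, then every generator $\tau_{uv}$ fixes $C$ setwise, hence so does $\langle \mathcal{T}_Y\rangle$, so it cannot be all of $S_{V(Y)}$. For the ``if'' direction, fix a spanning tree $T$ of $Y$; it suffices to show $\mathcal{T}_T$ already generates. I would induct on $n = |V(Y)|$, the cases $n \le 2$ being immediate: pick a leaf $\ell$ of $T$ with unique neighbour $m$. By induction $\langle \mathcal{T}_{T - \ell}\rangle$ is the symmetric group on $V(Y) \setminus \{\ell\}$, and since $\tau_{\ell m}\,\tau_{m w}\,\tau_{\ell m} = \tau_{\ell w}$ for every $w \ne \ell, m$, adjoining $\tau_{\ell m}$ produces every transposition involving $\ell$ as well; together these are all transpositions of $V(Y)$, which generate $S_{V(Y)}$.

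I do not expect a serious obstacle: the real content is the generation fact, which is classical (cf.\ \cite{godsil2001}). The only points needing care are the identification with a Cayley graph of $S_{V(Y)}$ -- in particular checking that each swap corresponds to composition with a transposition depending only on the chosen edge and not on the current configuration, which is exactly what makes the reachable set a single coset of a subgroup -- and the bookkeeping that the vertex set of $\FS(Y,K_n)$ is all of $S_{V(Y)}$, together with the trivial small cases.
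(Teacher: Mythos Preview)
Your argument is correct. Note, however, that the paper does not supply its own proof of this lemma: it is quoted as a known result from \cite{godsil2001} and left unproved. Your write-up is exactly the standard argument one finds there, namely identifying $\FS(Y,K_n)$ with the Cayley graph of $S_{V(Y)}$ on the set of edge-transpositions $\mathcal{T}_Y$ and then invoking the classical fact that these transpositions generate the full symmetric group if and only if $Y$ is connected. There is nothing to compare against in the paper itself, and no gap in what you wrote.
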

\begin{lemma}[\cite{godsil2001}, \cite{defant2021}]\label{lem_path}
    Let $P_n$ be the path on $n$ vertices and $Y$ a graph on $n$ vertices. Then $\FS(P_n, Y)$ is connected if and only if $Y$ is the complete graph. 
\end{lemma}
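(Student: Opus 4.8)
The plan is to prove the two directions separately. For the ``if'' direction, suppose $Y = K_n$. Since $\FS$ is symmetric in its two arguments, $\FS(P_n, K_n) \cong \FS(K_n, P_n)$, and because $P_n$ is connected, \Cref{lem_complete} immediately gives that $\FS(K_n, P_n)$, hence $\FS(P_n, K_n)$, is connected. (The underlying reason is simply that when $Y = K_n$ every swap across an edge of $P_n$ is automatically friendly, so from any bijection one can perform arbitrary adjacent transpositions of the images along the path; as the adjacent transpositions generate $S_n$, all bijections lie in a single component.)

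For the ``only if'' direction I argue the contrapositive: if $Y \neq K_n$, then $\FS(P_n, Y)$ is disconnected. We may assume $n \geq 2$, since for $n \leq 1$ one has $Y = K_n$ automatically. Label the vertices of $P_n$ by $1, 2, \ldots, n$ in path order, and fix a non-edge $\{u, v\}$ of $Y$. For a bijection $\sigma : V(P_n) \to V(Y)$, set $\iota(\sigma) = 1$ if $\sigma^{-1}(u) < \sigma^{-1}(v)$ and $\iota(\sigma) = 0$ otherwise; informally, $\iota(\sigma)$ records whether the token $u$ lies to the left of the token $v$ along the path. The claim is that $\iota$ is constant on every connected component of $\FS(P_n, Y)$: a $(P_n, Y)$-friendly swap across an edge $\{i, i+1\}$ exchanges only the two tokens at positions $i$ and $i+1$, and each of these moves by exactly one position, so the left-to-right order of any fixed pair of distinct tokens changes only when that pair is precisely the one being exchanged. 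For $\{u, v\}$ this would require $\{u, v\} \in E(Y)$, which is false, so $\iota$ is invariant. Since $\iota$ takes both values over the whole vertex set, $\FS(P_n, Y)$ has at least two connected components.

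The argument is short, and the one step I expect to require care is the invariance of $\iota$: one has to exploit the linear structure of the path to conclude that a fixed pair of tokens can reverse its relative order \emph{only} via a direct exchange of that pair --- an assertion that fails for non-path host graphs (for instance a cycle), which is precisely why the hypothesis $X = P_n$ is essential.
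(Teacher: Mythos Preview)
Your proof is correct. The paper does not actually prove this lemma; it merely cites \cite{godsil2001} and \cite{defant2021} for it, so there is no ``paper's own proof'' to compare against beyond noting that the result is imported from the literature.

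For what it is worth, your argument is the standard one and is essentially what appears in those references. The ``if'' direction via the symmetry $\FS(P_n,K_n)\cong\FS(K_n,P_n)$ together with \Cref{lem_complete} is exactly right (and, as you note, equivalent to the fact that adjacent transpositions generate $S_n$). The ``only if'' direction via the relative-order invariant $\iota$ for a non-edge $\{u,v\}$ is also correct, and your remark that the linear structure of $P_n$ is what makes $\iota$ invariant---since the only way two tokens can reverse order on a path is by a direct swap---pinpoints exactly where the hypothesis is used.
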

\begin{lemma}[\cite{defant2021}]\label{lem_spanning_subgraph}
    Let $X, \widetilde{X}, Y, \widetilde{Y}$ be graphs on $n$ vertices. If $X$ and $Y$ are subgraphs of $\widetilde{X}$ and $\widetilde{Y}$, respectively, then $\FS(X, Y)$ is a subgraph of $\FS(\widetilde{X}, \widetilde{Y})$. 
\end{lemma}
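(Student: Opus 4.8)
The final statement is \Cref{lem_spanning_subgraph}: if $X$ and $Y$ are spanning subgraphs of $\widetilde{X}$ and $\widetilde{Y}$ (all on the same vertex set structure, i.e.\ $n$ vertices each, with $V(X) = V(\widetilde{X})$ and $V(Y) = V(\widetilde{Y})$), then $\FS(X, Y)$ is a subgraph of $\FS(\widetilde{X}, \widetilde{Y})$. The plan is to verify this directly from the definitions by checking the two defining conditions of the subgraph relation: that every vertex of $\FS(X,Y)$ is a vertex of $\FS(\widetilde X,\widetilde Y)$, and that every edge of $\FS(X,Y)$ is an edge of $\FS(\widetilde X,\widetilde Y)$. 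There is essentially no clever idea required; the work is purely in unwinding the definition of a friendly swap and observing that edge inclusions on the two sides are preserved.

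First I would handle the vertex sets. Since $X$ and $\widetilde X$ have the same vertex set, and $Y$ and $\widetilde Y$ have the same vertex set, a bijection $\sigma \colon V(X) \to V(Y)$ is literally the same object as a bijection $\sigma \colon V(\widetilde X) \to V(\widetilde Y)$. Hence the vertex set of $\FS(X,Y)$, which is the set of all such bijections, coincides with the vertex set of $\FS(\widetilde X, \widetilde Y)$; in particular it is contained in it. This disposes of the vertex condition for the subgraph relation.

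Next I would handle the edges, which is the only substantive (though still routine) step. Suppose $\sigma$ and $\sigma'$ are adjacent in $\FS(X, Y)$. By the definition of a friendly swap, there is an edge $\{a, b\} \in E(X)$ with $\{\sigma(a), \sigma(b)\} \in E(Y)$, such that $\sigma$ and $\sigma'$ agree off $\{a,b\}$ and swap the images of $a$ and $b$. I would then invoke the hypothesis that $X$ is a subgraph of $\widetilde X$ to conclude $\{a,b\} \in E(\widetilde X)$, and that $Y$ is a subgraph of $\widetilde Y$ to conclude $\{\sigma(a), \sigma(b)\} \in E(\widetilde Y)$. Since the agreement-off-$\{a,b\}$ and swap conditions relating $\sigma$ and $\sigma'$ depend only on the two bijections themselves and not on the ambient graphs, these same conditions witness that the swap across $\{a,b\}$ is an $(\widetilde X, \widetilde Y)$-friendly swap. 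Therefore $\sigma$ and $\sigma'$ are adjacent in $\FS(\widetilde X, \widetilde Y)$, completing the edge inclusion.

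The only point requiring any care is bookkeeping about vertex sets: the notion of a spanning subgraph is being used implicitly, so one must note that $X$ and $\widetilde X$ share a vertex set (and likewise $Y$ and $\widetilde Y$) in order for a single bijection to serve as a vertex of both friends-and-strangers graphs simultaneously. I do not anticipate any genuine obstacle; the ``hard part,'' such as it is, is merely stating the definitions precisely enough that the two containments $E(X) \subseteq E(\widetilde X)$ and $E(Y) \subseteq E(\widetilde Y)$ transport an edge of $\FS(X,Y)$ to an edge of $\FS(\widetilde X, \widetilde Y)$ without any loss.
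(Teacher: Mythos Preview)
Your proposal is correct; the argument is exactly the direct verification from the definition of $\FS(X,Y)$ that one would expect. Note that the paper does not actually prove this lemma itself but simply cites it from \cite{defant2021}, so there is no ``paper's own proof'' to compare against; the proof in the original reference is likewise the immediate check that edge inclusions $E(X)\subseteq E(\widetilde X)$ and $E(Y)\subseteq E(\widetilde Y)$ carry a friendly swap to a friendly swap.
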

\begin{definition}[\cite{alon2023}]\label{def_exchangeable}
    Suppose $\sigma: V(X) \to V(Y)$ is a bijection between vertex sets of two graphs of the same size and $u, v \in V(Y)$. We say that $u$ and $v$ are \blue{\emph{$(X, Y)$-exchangeable from $\sigma$}} if there is a sequence of $(X, Y)$-friendly swaps from $\sigma$ that exchanges $u$ and $v$; namely, it takes $\sigma$ to the bijection $(u \; v) \circ \sigma$.
\end{definition}
The following result relates the connectedness of different friends-and-strangers graphs using exchangeable pairs of vertices. 
\begin{proposition}[\cite{alon2023}]\label{prop_exchange}
    Let $X$, $Y$, and $\widetilde{Y}$ be $n$-vertex graphs such that $Y$ is a spanning subgraph of $\widetilde{Y}$. Suppose that for every edge $\{u, v\}$ of $\widetilde{Y}$ and every bijection $\sigma$ satisfying $\{\sigma^{-1}(u), \sigma^{-1}(v)\} \in E(X)$, the vertices $u$ and $v$ are $(X, Y)$-exchangeable from $\sigma$. Then the connected components of $\FS(X,Y)$ and the connected components of $\FS(X, \widetilde{Y})$ have the same vertex sets. In particular, $\FS(X,Y)$ is connected if and only if $\FS(X, \widetilde{Y})$ is connected. 
\end{proposition}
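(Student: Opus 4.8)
The plan is to prove the two containments that, together, show $\FS(X,Y)$ and $\FS(X,\widetilde{Y})$ induce the same partition of their common vertex set (all bijections $\sigma\colon V(X)\to V(Y)=V(\widetilde{Y})$) into connected components. One direction is immediate: since $Y$ is a spanning subgraph of $\widetilde{Y}$, \Cref{lem_spanning_subgraph} gives that $\FS(X,Y)$ is a spanning subgraph of $\FS(X,\widetilde{Y})$, so any two bijections lying in the same component of $\FS(X,Y)$ also lie in the same component of $\FS(X,\widetilde{Y})$.

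For the reverse direction, I would reduce to single edges: it suffices to show that whenever $\sigma$ and $\sigma'$ are adjacent in $\FS(X,\widetilde{Y})$, they lie in the same component of $\FS(X,Y)$, since the general case then follows by concatenating such arguments along a path in $\FS(X,\widetilde{Y})$. So suppose $\sigma$ and $\sigma'$ are joined by an $(X,\widetilde{Y})$-friendly swap across an edge $\{a,b\}\in E(X)$. Writing $u=\sigma(a)$ and $v=\sigma(b)$, the definition of a friendly swap gives $\{u,v\}\in E(\widetilde{Y})$ and $\sigma'=(u \; v)\circ\sigma$, while $\sigma^{-1}(u)=a$ and $\sigma^{-1}(v)=b$, so $\{\sigma^{-1}(u),\sigma^{-1}(v)\}=\{a,b\}\in E(X)$. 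This is exactly the situation covered by the hypothesis, so $u$ and $v$ are $(X,Y)$-exchangeable from $\sigma$; that is, some sequence of $(X,Y)$-friendly swaps takes $\sigma$ to $(u \; v)\circ\sigma=\sigma'$. Hence $\sigma$ and $\sigma'$ lie in the same component of $\FS(X,Y)$.

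Combining the two directions, two bijections lie in the same component of $\FS(X,Y)$ if and only if they lie in the same component of $\FS(X,\widetilde{Y})$, so the components of the two graphs have identical vertex sets. The ``in particular'' claim is then the special case where one (equivalently, the other) graph has a single component.

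As for the main obstacle: I expect there to be no genuine difficulty here — the argument is essentially bookkeeping. The only point requiring a little care is the clean reduction to single edges of $\FS(X,\widetilde{Y})$, together with the observation that the edge data $(\{a,b\},\{u,v\})$ of such an edge is precisely the input demanded by the exchangeability hypothesis; once the definitions of friendly swap and of $(X,Y)$-exchangeable are unwound, everything falls out.
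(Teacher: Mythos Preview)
Your proposal is correct and follows essentially the same approach as the paper: use \Cref{lem_spanning_subgraph} for one containment, and for the other reduce to a single edge $\{\sigma,\sigma'\}$ of $\FS(X,\widetilde{Y})$ and observe that the hypothesis is exactly the statement that $\sigma$ and $\sigma'$ lie in the same component of $\FS(X,Y)$. The paper's proof is just a terser version of what you wrote.
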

\begin{proof}
    By \Cref{lem_spanning_subgraph}, it suffices to show that if $\{\sigma,\sigma'\}$ is an edge in $\FS(X, \widetilde{Y})$, then $\sigma$ and $\sigma'$ are in the same connected component in $\FS(X,Y)$. This is precisely the assumption in the statement of the proposition. 
\end{proof}
The following lemmas help us prove \Cref{thm_multi_disconnected}. 

\begin{lemma}[\cite{defant2021}]\label{lem_bipartite_disconnected}
    If $X$ and $Y$ are bipartite graphs on $n$ vertices, then $\FS(X, Y)$ is disconnected.
\end{lemma}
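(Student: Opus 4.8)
The plan is to exhibit a function $\Phi \colon V(\FS(X,Y)) \to \mathbb{Z}/2\mathbb{Z}$ that is constant on every connected component but is not constant on the whole vertex set; such a function immediately forces $\FS(X,Y)$ to be disconnected. Fix a proper $2$-coloring $c_X \colon V(X) \to \mathbb{Z}/2\mathbb{Z}$ (which exists since $X$ is bipartite) and a proper $2$-coloring $c_Y \colon V(Y) \to \mathbb{Z}/2\mathbb{Z}$. Fix also, once and for all, a reference bijection $\iota \colon V(Y) \to V(X)$, so that for any bijection $\sigma \colon V(X) \to V(Y)$ the composition $\iota \circ \sigma$ is a permutation of the set $V(X)$ with a well-defined sign; write $\mathrm{p}(\sigma) \in \mathbb{Z}/2\mathbb{Z}$ for the corresponding parity ($0$ if $\iota\circ\sigma$ is even, $1$ if odd). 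Define
\[
\Phi(\sigma) \;=\; \mathrm{p}(\sigma) \;+\; \sum_{x \in V(X)} c_X(x)\, c_Y(\sigma(x)) \pmod 2 .
\]

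The first step is to verify that $\Phi$ is invariant under $(X,Y)$-friendly swaps. If $\sigma'$ is obtained from $\sigma$ by a friendly swap across $\{a,b\} \in E(X)$, then $\sigma' = \sigma \circ (a\,b)$, so $\iota\circ\sigma'$ differs from $\iota\circ\sigma$ by a transposition and hence $\mathrm{p}(\sigma') = \mathrm{p}(\sigma) + 1$. Meanwhile only the terms indexed by $a$ and $b$ change in the sum, and a short computation in $\mathbb{Z}/2\mathbb{Z}$ shows the sum changes by $\bigl(c_X(a)+c_X(b)\bigr)\bigl(c_Y(\sigma(a))+c_Y(\sigma(b))\bigr)$. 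Since $\{a,b\}\in E(X)$ and $c_X$ is proper we get $c_X(a)+c_X(b)=1$, and since $\{\sigma(a),\sigma(b)\}\in E(Y)$ and $c_Y$ is proper we get $c_Y(\sigma(a))+c_Y(\sigma(b))=1$; hence the sum also changes by $1$, and the two changes cancel, giving $\Phi(\sigma')=\Phi(\sigma)$. Thus $\Phi$ is constant on each connected component of $\FS(X,Y)$.

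The second step is to produce two bijections on which $\Phi$ differs. Since $|V(Y)| = n$ and $c_Y$ has only two color classes, some color class of $Y$ contains two distinct vertices $u, v$ (here one uses $n \ge 3$; the cases $n \le 2$ can be checked by hand). Pick any bijection $\sigma$ and set $\sigma'' = (u\,v) \circ \sigma$. Then $\mathrm{p}(\sigma'') = \mathrm{p}(\sigma) + 1$, while the sum in the definition of $\Phi$ is unchanged because $c_Y(u) = c_Y(v)$. Hence $\Phi(\sigma'') \neq \Phi(\sigma)$, so $\sigma$ and $\sigma''$ lie in different connected components, and $\FS(X,Y)$ is disconnected.

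There is no real obstacle here: the argument is just the observation that a bipartite $X$ together with a bipartite $Y$ yields \emph{two} independent ``sign-type'' quantities — the permutation parity $\mathrm{p}$ and the crossing-count $\sum_x c_X(x)c_Y(\sigma(x))$ — each of which flips under every friendly swap, so their sum is a genuine invariant (this generalizes the classical parity obstruction for the $15$-puzzle). The only points requiring a little care are (i) fixing the reference bijection $\iota$ so that the sign is well defined for bijections between two distinct vertex sets, and (ii) checking that $\Phi$ is actually non-constant, which is where the existence of a repeated color in $Y$ enters.
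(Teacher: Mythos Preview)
Your argument is correct and is essentially the standard proof (the paper does not give its own proof, citing Defant--Kravitz instead; the invariant you construct is exactly the one appearing there, and indeed the specialization to complete bipartite graphs is quoted later in the paper as \Cref{lem_two_components}). One small quibble: your parenthetical ``the cases $n\le 2$ can be checked by hand'' is not quite right, since for $n=2$ with $X=Y=K_2$ the graph $\FS(X,Y)$ is a single edge and hence connected; the lemma really does need $n\ge 3$, which is implicit throughout the paper.
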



\begin{lemma}\label{lem_cycle_components}
    Suppose $t \geq 2$, $1 \leq k_1 \leq \cdots \leq k_t$, and $n = k_1 + \cdots + k_t \geq 4$. Let $C_n$ be the cycle on $n$ vertices. Then the number of connected components in $\FS(C_n, K_{k_1, \ldots, k_t})$ is exactly $\gcd(k_1, \ldots, k_t) \cdot \prod (k_i - 1)!$. In particular, $\FS(C_n, K_{k_1, \ldots, k_t})$ is connected if and only if $t > 2$, $k_1 = 1$, and $k_t \leq 2$. 
\end{lemma}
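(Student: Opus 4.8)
The plan is to turn the question into a covering-space/monodromy computation. Identify $V(C_n)$ with $\mathbb{Z}/n$, so that a bijection $\sigma$ becomes a cyclic arrangement of $n$ labelled beads, each colored by the part of $Y:=K_{k_1,\dots,k_t}$ it lies in (color $i$ occurring $k_i$ times), and an $(C_n,Y)$-friendly swap becomes the transposition of two cyclically adjacent beads of distinct colors. Two beads of the same color are never swapped directly, so for each $i$ the cyclic order of the $k_i$ labelled color-$i$ beads around $C_n$ is preserved; this already partitions $V(\FS(C_n,Y))$ into $\prod_i(k_i-1)!$ nonempty classes. Forgetting bead labels gives a graph map onto the graph $\overline{\FS}$ whose vertices are the $n$-colorings of $C_n$ with multiplicities $k_1,\dots,k_t$ and whose edges transpose adjacent distinct colors; since the swaps available at a configuration depend only on its coloring, this map restricts to a bijection on the edges at each vertex, i.e.\ it is a covering of graphs.

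First I would check $\overline{\FS}$ is connected: any coloring can be driven to the sorted coloring $C_0=1^{k_1}2^{k_2}\cdots t^{k_t}$ by sliding the color-$1$ beads into one arc (each may pass any bead of another color), then the color-$2$ beads inside the remaining arc, and so on. Hence the connected components of $\FS(C_n,Y)$ correspond to the orbits of the monodromy group $G$ of this covering on the fiber over $C_0$, which is the set of $\prod_i k_i!$ bijections inducing $C_0$; since the color-preserving group $\prod_i S_{k_i}$ acts simply transitively on that fiber, we may regard $G\le\prod_i S_{k_i}$, and the number of components is $\bigl(\prod_i k_i!\bigr)/|G|$. By cyclic-order invariance every element of $G$ rotates each color class, so $G$ lies in the image of $(r_i)_i\mapsto(c_i^{r_i})_i$ from $\prod_i\mathbb{Z}/k_i$ (with $c_i$ a $k_i$-cycle). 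Setting $g:=\gcd(k_1,\dots,k_t)$, the desired count $\bigl(\prod_i k_i!\bigr)/|G|=g\cdot\prod_i(k_i-1)!$ is equivalent to $G$ being the index-$g$ subgroup $H:=\{(r_i)_i:\sum_i r_i\equiv 0\pmod g\}$ of $\prod_i\mathbb{Z}/k_i$; the ``in particular'' statement then follows since this count is $1$ exactly when $g=1$ and every $k_i\le 2$, which under $t\ge 2$ and $n\ge 4$ forces $t>2$, $k_1=1$, $k_t\le 2$.

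So everything comes down to $G=H$. For $H\subseteq G$ I would exhibit explicit loops: routing one bead once around the cycle, then restoring the coloring $C_0$ by a compensating cyclic rotation, realizes a ``transfer'' element which adds $1$ to one color's rotation and subtracts $1$ from another's; a short B\'ezout-type argument shows such transfers generate all of $H$. For $G\subseteq H$ I need the residue $\sum_i r_i\bmod g$ realized by any loop to vanish, i.e.\ a $\mathbb{Z}/g$-valued invariant of labelled configurations fixed by every friendly swap. One ingredient is a sign computation: a loop of length $N$ realizes a permutation of sign $(-1)^N$, while $\mathrm{sgn}(c_i^{r_i})=(-1)^{(k_i-1)r_i}$; combining this with a potential-function argument constraining the parity of $N$ for a loop returning to the coloring $C_0$ (when $C_n$ is bipartite, a ``number of color-$1$ beads on even positions'' potential changes by $\pm1$ whenever a color-$1$ bead is swapped) pins the invariant down modulo $2$, and the residue modulo the remaining prime-power divisors of $g$ requires a refinement exploiting the $g$-fold periodicity of $C_0$.

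The main obstacle is exactly this inclusion $G\subseteq H$: producing a genuinely $\gcd(k_1,\dots,k_t)$-valued invariant preserved by every friendly swap. The sign argument alone controls $\sum_i r_i$ only modulo $2$ (and in the odd case gives nothing), so the crux is the right homological/monodromy computation — equivalently, the right potential function on colorings-with-labels; granting it, both inclusions and the final arithmetic are routine bookkeeping.
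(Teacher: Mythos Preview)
Your framework is the same as the paper's, but you have correctly located a genuine gap: the inclusion $G\subseteq H$. Your sign/potential arguments control $\sum_i r_i$ only modulo $2$, and you offer nothing for the odd prime powers dividing $g=\gcd(k_1,\dots,k_t)$; the vague appeal to ``$g$-fold periodicity of $C_0$'' is not an argument.

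The paper sidesteps this completely by a small change of viewpoint that you should adopt. Rather than quotienting by labels (passing to colorings) and then trying to reconstruct a $\mathbb{Z}/g$-valued invariant on the cover, fix a distinguished vertex $c_1$ and record, for each color $i$, the \emph{linear} order in which the labelled color-$i$ beads appear as you read $c_1,c_2,\dots,c_n$. This defines a map $\Phi$ from bijections to $\prod_i S_{k_i}$. The key observation is that a friendly swap across any edge $\{c_j,c_{j+1}\}$ with $1\le j\le n-1$ leaves $\Phi$ unchanged, since swapping two beads of different colors does not alter the order within either color class. Only the swap across $\{c_1,c_n\}$ can change $\Phi$. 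Moreover, using only the other $n-1$ edges one can sort the coloring into contiguous blocks in any order $\tau\in S_t$ without changing $\Phi$, and from such a sorted configuration the $\{c_1,c_n\}$-swap acts on $\Phi$ as the pair of cycles $\bigl((1\;2\cdots k_{\tau(1)}),(1\;2\cdots k_{\tau(t)})^{-1}\bigr)$ on the two relevant coordinates. These generators visibly lie in $H=\{(c_1^{a_1},\dots,c_t^{a_t}):\sum_i a_i=0\}$, so $G\subseteq H$ is immediate; no invariant needs to be built by hand. The reverse inclusion and the size $|H|=\prod_i k_i/g$ are your B\'ezout step, and the count follows.

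In short, your difficulty is self-inflicted: by passing to the \emph{cyclic} order of each color class you discard exactly the anchoring data that makes all but one edge inert, and are then forced to reconstruct it as an invariant. Anchoring at $c_1$ keeps that data and turns both inclusions into one-line computations.
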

\begin{proof}
    Suppose $C_n$ has vertices $c_1, \ldots, c_n$ (ordered clockwise). We can map any bijection $\sigma$ to an element in the direct product of symmetric groups $\prod_{i = 1}^t S_{k_i}$ by looking at the ordering of vertices in each partition class in $(\sigma(c_1), \ldots, \sigma(c_n))$. Observe that elements in different partition classes can be swapped and a swap along any edge in the cycle (except for the swap along $\{c_1, c_n\}$) of two such elements does not change the element of $\prod_{i = 1}^t S_{k_i}$ the bijection maps to. Thus, we can order the elements of $[n]$ so that for any $\tau \in S_t$, we can move the elements in the partition class of size $k_{\tau(i)}$ to occupy $\{c_{k_{\tau(1)} + \cdots + k_{\tau(i-1)} + 1}, \ldots, c_{k_{\tau(1)} + \cdots + k_{\tau(i)}}\}$, all without changing the element of $\prod_{i = 1}^t S_{k_i}$ that the bijection maps to. 

    The only swap changing the permutation that $\sigma$ maps to is the swap along $\{c_1, c_n\}$. This corresponds precisely to the permutation with the cycle $(1 \; \cdots \; k_{\tau(1)})$ on $S_{k_{\tau(1)}}$ and the cycle $(1 \; \cdots \; k_{\tau(t)})^{-1}$ on $S_{k_{\tau(t)}}$. Since we can choose $\tau \in S_t$ arbitrarily, the set of generators we get from these friendly swaps is 
    \[
        \left\{\left(e, \ldots, e, (1 \; \cdots \; k_{i}), e, \ldots, e, (1 \; \cdots \; k_{j})^{-1}, e, \ldots, e \right) \in \prod_{i = 1}^t S_{k_i} \ \Big{|} \ i, j \in [t] \right\}.
    \]
    Thus, the subgroup of $\prod_{i = 1}^t S_{k_i}$ we can generate is precisely 
    \[
        H = \left\{ \left( (1 \; \cdots \; k_{1})^{a_1}, \ldots, (1 \; \cdots \; k_{t})^{a_t} \right) \ \Big{|} \ \sum_{i = 1}^{t} a_i = 0 \right\}.
    \]
    By B\'ezout's Lemma, $|H| = \prod_{i = 1}^{t} k_i / \gcd(k_1, \ldots, k_t)$. The number of connected components in $\FS(C_n, K_{k_1, \ldots, k_t})$ is then the number of left cosets of $H$ in $\prod_{i = 1}^t S_{k_i}$, which is $\gcd(k_1, \ldots, k_t) \cdot \prod (k_i - 1)!$.
\end{proof}
This can also be deduced from Defant and Kravitz's result on cycle graphs in \cite{defant2021}.

\begin{lemma}[\cite{milojevic2022}]\label{lem_k_bridge_disconnected}
    Let $X$ be a graph containing a $k$-bridge. If $Y$ is not $(k + 1)$-connected, then $\FS(X, Y)$ is disconnected. 
\end{lemma}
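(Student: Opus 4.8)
The plan is to show that if $X$ contains a $k$-bridge $v_1, v_2, \ldots, v_k$ and $Y$ is not $(k+1)$-connected, then $\mathsf{FS}(X,Y)$ is disconnected by exhibiting an invariant preserved by all friendly swaps. Since $Y$ is not $(k+1)$-connected, there is a vertex cut $S \subseteq V(Y)$ with $|S| \leq k$ such that $Y - S$ is disconnected (or $Y$ is so small that this is degenerate, a case we handle separately). Removing the $k$ edges of the $k$-bridge from $X$ — equivalently, deleting the $k-1$ internal degree-$2$ vertices $v_2, \ldots, v_{k-1}$ together with the bridge edges — splits $X$ into two sides $A$ and $B$ meeting the bridge at $v_1 \in A$ and $v_k \in B$ (when $k=1$ the single cut-vertex plays the role of the ``bridge'', and the $v_i$'s collapse appropriately; the argument adapts). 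The key combinatorial observation is that the $k$ vertices $v_1, \ldots, v_k$ form a ``thin channel'': any bijection $\sigma$ can be summarized by recording which of the two components of $Y - S$ each of $\sigma^{-1}(\text{non-}S)$ vertices on side $A$ lies in, and a swap across a non-bridge edge never moves a token between the two components of $Y-S$ while the bridge, having only $k$ slots and needing to route a token past $S$, is too narrow to reconnect them.

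The main steps, in order, are as follows. First I would fix the vertex cut $S$ of $Y$ with $|S| \le k$, write $Y - S = Y_1 \sqcup Y_2 \sqcup \cdots$ as its components, and pick two nonempty unions of components $U_1, U_2$ partitioning $V(Y)\setminus S$ with no edges between them. Second, I would set up the decomposition of $X$ induced by the $k$-bridge: let $P = \{v_1, \ldots, v_k\}$ be the bridge vertices, and observe that $X - \{v_2,\ldots,v_{k-1}\}$ (or $X$ minus the bridge edges) separates into parts $A \ni v_1$ and $B \ni v_k$. Third — the heart of the argument — I would define, for each bijection $\sigma$, the quantity
\[
\Phi(\sigma) = \bigl|\{\, a \in A \setminus \{v_1\} : \sigma(a) \in U_1 \,\}\bigr| + (\text{correction terms for which elements of } S \text{ sit on the } A\text{-side}),
\]
and argue that any friendly swap changes $\Phi(\sigma)$ in a controlled way — swaps entirely within $A$ or within $B$ leave it unchanged, and swaps across a bridge edge can only shuffle contents among the at most $k$ bridge positions, which cannot increase the number of $U_1$-tokens stranded on one side past the bottleneck $S$. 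Making this precise is exactly where one must be careful: the right invariant is likely not a single integer but a pair (or a residue), recording both a count of $U_1$-tokens on side $A$ and the multiset of $S$-vertices on side $A$, since a token can legitimately pass through a position occupied (in $Y$) by an $S$-vertex. Fourth, I would exhibit two bijections with different values of this invariant — e.g., by explicitly loading side $A$ with as many $U_1$-tokens as it can hold versus as few as possible — to conclude $\mathsf{FS}(X,Y)$ has at least two components.

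The main obstacle will be step three: formulating an invariant that is genuinely preserved by \emph{every} friendly swap, including swaps across the bridge edges that legitimately permute the $k$ bridge vertices among themselves and thereby slide a token all the way from $v_1$ to $v_k$. The subtlety is that such a slide \emph{is} allowed when the $k$ images along the bridge form a path in $Y$; the claim being that it still cannot change which ``super-component'' ($U_1$ versus $U_2$) accumulates a given token, because routing a $U_1$-token into $U_2$ would require an edge of $Y$ crossing from $U_1$ to $U_2$ without passing through $S$, and at most $k$ consecutive bridge slots can accommodate at most $|S| \le k$ cut vertices plus the token, forcing a $U_1$–$U_2$ adjacency that does not exist. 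I expect the cleanest route is to reduce to the case $k = |S|$ with $S$ a \emph{minimal} cut and $Y - S$ having exactly two components, then to track, as the invariant, the cyclic/linear arrangement type of the tokens currently occupying $v_1, \ldots, v_k$ relative to $S$, mirroring how the $\gcd$ invariant arises in \Cref{lem_cycle_components}; alternatively one can cite \Cref{prop_exchange} to replace $Y$ by the graph $Y'$ obtained by contracting each $U_i$ to reduce bookkeeping. I would also double-check the degenerate small cases (e.g., $|S|$ such that one ``component'' is empty, or $k$ close to $n$) separately, as the invariant may trivialize there.
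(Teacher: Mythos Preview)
The paper does not prove this lemma; it is quoted from Milojevi\'c's paper and used as a black box, so there is no in-paper argument to compare your proposal against.

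Your high-level strategy---exploit a vertex cut $S$ of $Y$ with $|S|\le k$ together with the $k$-bridge of $X$ to build an invariant separating components of $\FS(X,Y)$---is the natural one and is indeed how this result is proved in the literature. The difficulty, as you correctly flag, is pinning down an invariant that survives swaps in which an $S$-token enters or leaves the bridge. Your proposal never actually does this. The naive count $|\{a\in A:\sigma(a)\in U_1\}|$ is not preserved once $S$-tokens move across the $A$-boundary, and none of your suggested repairs (``likely a pair or a residue'', ``arrangement type of tokens on the bridge'') is made concrete enough to verify. Two of the specific suggestions are missteps: the analogy with the $\gcd$ invariant of \Cref{lem_cycle_components} is a red herring (that invariant relies on the cycle's rotational structure, which the bridge-with-two-ends does not have), and invoking \Cref{prop_exchange} to ``contract each $U_i$'' does not fit---that proposition compares $\FS(X,Y)$ with $\FS(X,\widetilde Y)$ for a spanning \emph{supergraph} $\widetilde Y$, not a quotient, and is a tool for establishing connectedness rather than ruling it out.

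The genuine content you are missing is the observation that makes the inequality $|S|\le k$ bite: one shows that whenever all $S$-tokens are off the bridge, the $U_1$-count on one side is frozen (no $U_1$--$U_2$ swap is possible across any bridge edge), and that each ``excursion'' of the $S$-tokens through the $k$ bridge slots returns the count to its starting value once they exit, because the at most $k$ white tokens cannot simultaneously open a channel longer than the bridge. Formalising this---e.g.\ by conditioning on the configuration where the $S$-tokens occupy a fixed set and arguing that the count is the same at the start and end of each excursion---is exactly the work that remains. As written, your proposal correctly locates the battlefield but does not fight the battle.
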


Now we are ready to prove the disconnectedness results.
\begin{proof}[Proof of \Cref{thm_multi_disconnected}]
    The first case follows from \Cref{lem_complete}, \Cref{lem_bipartite_disconnected}, \Cref{lem_cycle_components}, and \Cref{lem_k_bridge_disconnected}. The second case follows from \Cref{lem_complete}, \Cref{lem_cycle_components}, and \Cref{lem_k_bridge_disconnected}. The third case follows from \Cref{lem_complete} and \Cref{lem_path}.
\end{proof}

\section{Connectedness of $\FS(X, K_{k_1, \ldots, k_t})$ where $X$ is not a tree}\label{sec_not_tree}
Throughout this section, we assume that $K_{k_1, \ldots, k_t}$ has vertex set $[n]$, with partition classes $S_1 = \{s_1 + 1, \ldots, s_1 + k_1\}, S_2 = \{s_2 + 1, \ldots, s_2 + k_2\}, \ldots, S_t = \{s_{t} + 1, \ldots, s_{t} + k_t\}$, where $s_1 = 0$ and $s_i = \sum_{j = 1}^{i - 1} k_j$. Since Wilson's theorem in \cite{wilson1974} takes care of the case where $K_{k_1, \ldots, k_t}$ is a star, we can assume that $k_t < n - 1$. 

By \Cref{prop_exchange} and \Cref{lem_complete}, the following implies \Cref{thm_multi_connected}. 

\begin{theorem}\label{thm_reduction}
    Suppose $t \geq 2$ and $1 \leq k_1 \leq \cdots \leq k_t$, where $n = k_1 + \cdots + k_t \geq 4$. Let $X$ be a graph on $n$ vertices. Suppose one of the following holds:
    \begin{enumerate}
        \item $t = 2$ and $k_t < n - 1$. The graph $X$ is connected, is non-bipartite, is not a cycle, and does not contain a $k$-bridge.
        \item $t > 2$. The graph $X$ is connected, is not a cycle, is not a tree, and does not contain an $(n - k_t)$-bridge.
    \end{enumerate} 
    Then for any $i, j \in [n]$ and $\sigma: V(X) \to V(K_{k_1, \ldots, k_t})$ satisfying $\{\sigma^{-1}(i), \sigma^{-1}(j)\} \in E(X)$, the vertices $i$ and $j$ are $(X, K_{k_1, \ldots, k_t})$-exchangeable from $\sigma$. 
\end{theorem}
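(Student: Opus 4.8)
The plan is to reduce the claim to a small number of ``local moves'' that can be carried out inside $X$. The key observation is that the hypotheses on $X$ (connected, not a cycle, not a tree, no $(n-k_t)$-bridge, and in the bipartite-$Y$ case non-bipartite) force $X$ to contain a relatively rich subgraph near the edge $\{a,b\} = \{\sigma^{-1}(i), \sigma^{-1}(j)\}$: concretely, since $X$ is not a cycle and not a tree, it has a cycle together with an extra edge or vertex, and the absence of an $(n-k_t)$-bridge prevents a long ``tail'' from isolating any vertex. I would first set up notation for the two tokens $i, j$ sitting on the adjacent vertices $a, b$ and distinguish cases according to whether $i$ and $j$ lie in the same partition class $S_\ell$ of $K_{k_1,\dots,k_t}$ or in different ones.

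The easy case is when $i$ and $j$ lie in \emph{different} partition classes, since then $\{i,j\} \in E(K_{k_1,\dots,k_t})$ and a single friendly swap across $\{a,b\}$ exchanges them. So the real content is when $i, j \in S_\ell$ for some $\ell$. Here I would exploit that $n - k_\ell \geq k_1 \geq 1$ — in fact the complement of $S_\ell$ is nonempty — to find a ``helper'' token $m \notin S_\ell$ that we can route, via friendly swaps, onto a vertex adjacent (in $X$) to the current position of $i$ or $j$. The idea is the standard three-cycle trick: using $m$ as an intermediary, $i$ and $j$ become exchangeable provided we can maneuver the three tokens $i, j, m$ around a cycle of $X$ (or a $\theta$-subgraph) while only ever swapping tokens in distinct partition classes. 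Because $m \notin S_\ell$, every swap involving $m$ is automatically friendly, so the obstruction is purely the combinatorics of moving three labeled tokens on the relevant subgraph of $X$ — essentially a $15$-puzzle-type argument on a cycle-plus-chord.

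The main obstacle, and where I expect the bulk of the work to go, is handling the structure of $X$ near $\{a,b\}$ when $X$ is sparse — e.g., when $a$ or $b$ lies on a long induced path (a near-bridge) so that there is little room to maneuver. Here the no-$(n-k_t)$-bridge hypothesis must be used quantitatively: it guarantees that any bridge-path in $X$ has length at most $n - k_t - 1$, which in turn guarantees enough tokens outside $S_\ell$ are ``accessible'' to act as helpers, or that there is a cycle close enough to $\{a,b\}$ to perform rotations. I would likely split into the case where $\{a,b\}$ lies on a cycle of $X$ (rotate three tokens around that cycle, possibly enlarged along a path to bring in a helper outside $S_\ell$) and the case where $\{a,b\}$ is a cut-edge of $X$ (then argue that one side is large enough to contain a cycle or enough non-$S_\ell$ tokens, using $|S_\ell| = k_\ell \le k_t$ and the bridge bound). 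The non-bipartiteness hypothesis in case (1) is what rules out a parity obstruction on an even cycle — an odd cycle allows a single token to be cyclically shifted into any position, which is exactly the flexibility needed. Finally, I would verify that throughout these manipulations every swap is legal: swaps moving the helper $m$ are always friendly, and any swap between two would-be-$S_\ell$ tokens is avoided by design, so the only friendly-swap condition we ever need is the trivial one. Assembling these cases, together with \Cref{lem_complete}, \Cref{lem_path}, and \Cref{lem_cycle_components} to dispose of degenerate configurations, yields the exchangeability of $i$ and $j$ from $\sigma$.
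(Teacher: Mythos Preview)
Your outline has the right opening move (the different-partition-class case is a single swap) and the right instinct that a cycle in $X$ plus tokens outside $S_\ell$ should do the work, but there are two concrete gaps that would prevent the argument from closing.

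First, the ``three-cycle trick'' with a single helper $m$ is not enough. On a cycle of length greater than $3$, rotating the triple $i,j,m$ does not give the transposition $(i\;j)$; you need genuinely more structure. The paper's local step is \Cref{thm_stopwatch}: on a stopwatch $W$ (an $(n-1)$-cycle with one pendant edge) the graph $\FS(W,K_{k'_1,\dots,k'_{t'}})$ is connected exactly when $|V(W)|$ is even or $t'>2$. This is not a routine $15$-puzzle observation; it is proved via the group-theoretic \Cref{prop_alpha_beta}, showing that two specific elements generate all of $S_k\times S_\ell$ (or its index-$2$ even subgroup) according to the parity of $k+\ell$. Your sketch does not supply anything at this level, and without it the same-class exchange does not go through on long cycles.

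Second, and more structurally, you are trying to carry out the exchange \emph{near the original edge} $\{a,b\}$ and then casing on whether $\{a,b\}$ lies on a cycle or is a cut-edge. The paper avoids this entirely via \Cref{lem_u_v_equivalent}: since $i,j\in S_\ell$ have identical neighborhoods in $K_{k_1,\dots,k_t}$, exchangeability of $i,j$ is a property of the connected component of $\sigma$, not of $\sigma$ itself. So the paper fixes once and for all a minimal cycle $C$ (odd if $t=2$), uses \Cref{prop_populate_cycle} to reach a $\sigma'$ in which $i,j$ and a controlled number of $S_\ell$-tokens sit on $C$ with at least one helper available to form the stopwatch $W$, and only then invokes \Cref{thm_stopwatch} locally via \Cref{lem_localization}. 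The no-$(n-k_t)$-bridge hypothesis is spent inside \Cref{prop_populate_cycle} (to guarantee that the ``empty spots'' fork rather than lining up on a single path), not in an ad hoc analysis near $\{a,b\}$. Your plan has no analogue of \Cref{prop_populate_cycle}, and without the freedom to move to a convenient $\sigma'$ the case analysis you propose would become unmanageable.
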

We can merge cases (2) and (3) from \Cref{thm_multi_connected} since in case (3), if $X$ is a cycle, \Cref{lem_cycle_components} shows that $\FS(C_n, K_{k_1, \ldots, k_t})$ is connected. The condition in case (3) that $X$ is not a tree, hence not a path, implies that $X$ does not contain an $(n - 2)$-bridge.

By the symmetry of $K_{k_1, \ldots, k_t}$, it suffices to prove \Cref{thm_reduction} when $(i, j) = (s_a + 1, s_a + 2)$ for any $a \in [t]$ with $k_a > 1$. (The case where $i$ and $j$ are in different partition classes is trivially true because $i$ and $j$ are adjacent in $K_{k_1, \ldots, k_t}$). 

Another important use of the symmetry is that if $i,j$ are in the same partition class, then they can swap with the same set of ``people'' in $[n]$. This allows us to simplify the problem by moving to a more favorable configuration for the exchange. 
\begin{lemma}\label{lem_u_v_equivalent}
    Let $X$ and $Y$ be graphs on $n$ vertices. If $u, v \in V(Y)$ have the same set of neighbors in $Y$, then for any $\sigma$ and $\sigma'$ in the same component of $\FS(X, Y)$, $u$ and $v$ are $(X, Y)$-exchangeable from $\sigma$ if and only if they are $(X, Y)$-exchangeable from $\sigma'$. 
\end{lemma}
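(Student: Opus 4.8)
The plan is to establish \Cref{lem_u_v_equivalent} by showing that the property ``$u$ and $v$ are $(X,Y)$-exchangeable'' is invariant under a single $(X,Y)$-friendly swap, which then propagates along any path in the component by induction. So it suffices to take an edge $\{\sigma, \sigma'\}$ of $\FS(X,Y)$ and show that $u,v$ are exchangeable from $\sigma$ if and only if they are exchangeable from $\sigma'$. By symmetry of the biconditional, I only need one direction: assume $u,v$ are exchangeable from $\sigma$ and deduce they are exchangeable from $\sigma'$.

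First I would set up notation: write $\sigma' = (a\;b)\circ\sigma$ where $\{a,b\}$ is the edge of $X$ along which the swap occurs, so $\{\sigma(a),\sigma(b)\}\in E(Y)$ and, equivalently, $\{(\sigma')^{-1}(p),(\sigma')^{-1}(q)\}\in E(X)$ where $p=\sigma(a),q=\sigma(b)$. Let $w_1,w_2,\dots$ denote the sequence of friendly swaps witnessing the exchange of $u$ and $v$ from $\sigma$, i.e.\ a walk in $\FS(X,Y)$ from $\sigma$ to $(u\;v)\circ\sigma$. The key idea is to ``conjugate'' this walk: I want to produce from it a walk in $\FS(X,Y)$ from $\sigma'$ to $(u\;v)\circ\sigma'$. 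The natural candidate is to precede the walk by the swap $(a\;b)$ (taking $\sigma'$ back to $\sigma$), run the given walk to reach $(u\;v)\circ\sigma$, and then append a swap taking $(u\;v)\circ\sigma$ to $(u\;v)\circ\sigma' = (a\;b)\circ(u\;v)\circ\sigma$. For this last step I need $(u\;v)\circ\sigma$ and $(a\;b)\circ(u\;v)\circ\sigma$ to be adjacent in $\FS(X,Y)$, i.e.\ I need the swap along $\{a,b\}$ in $X$ to be $(X,Y)$-friendly for the bijection $(u\;v)\circ\sigma$. This requires $\{((u\;v)\circ\sigma)(a),((u\;v)\circ\sigma)(b)\}\in E(Y)$.

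The main obstacle is exactly verifying that last adjacency, and this is where the hypothesis that $u$ and $v$ have the same neighborhood in $Y$ is used. The values $((u\;v)\circ\sigma)(a)$ and $((u\;v)\circ\sigma)(b)$ are obtained from $\sigma(a),\sigma(b)$ by possibly swapping occurrences of $u$ and $v$; concretely, applying the transposition $(u\;v)$ to each of the two (distinct) values $\sigma(a),\sigma(b)\in V(Y)$. Since $\{\sigma(a),\sigma(b)\}\in E(Y)$, I need to argue that replacing $u$ by $v$ and/or $v$ by $u$ among these two adjacent vertices preserves adjacency. If neither of $\sigma(a),\sigma(b)$ equals $u$ or $v$, nothing changes. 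If exactly one of them, say $\sigma(a)$, equals $u$, then the new pair is $\{v,\sigma(b)\}$, and since $\sigma(b)$ was a neighbor of $u$ in $Y$ and $u,v$ have the same neighborhood, $\sigma(b)$ is also a neighbor of $v$, so $\{v,\sigma(b)\}\in E(Y)$; the symmetric cases ($\sigma(a)=v$, or $\sigma(b)\in\{u,v\}$) are identical. The remaining case is $\{\sigma(a),\sigma(b)\}=\{u,v\}$, in which case $(u\;v)$ fixes the pair setwise and adjacency is trivially preserved (note this forces $\{u,v\}\in E(Y)$, which is consistent). This case analysis is short and I would present it compactly, perhaps phrased as: ``for any $x,y\in V(Y)$ with $\{x,y\}\in E(Y)$, one has $\{(u\;v)(x),(u\;v)(y)\}\in E(Y)$,'' i.e.\ the transposition $(u\;v)$ is a graph automorphism of $Y$ — which is precisely the statement that $u$ and $v$ are twins.

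With that observation in hand, the walk $\sigma' \xrightarrow{\{a,b\}} \sigma \;\leadsto\; (u\;v)\circ\sigma \xrightarrow{\{a,b\}} (u\;v)\circ\sigma'$ is a valid walk in $\FS(X,Y)$ whose net effect is $\sigma'\mapsto (u\;v)\circ\sigma'$, so $u$ and $v$ are $(X,Y)$-exchangeable from $\sigma'$. Interchanging the roles of $\sigma$ and $\sigma'$ gives the converse, proving the equivalence across a single edge; chaining this equivalence along a path from $\sigma$ to $\sigma'$ inside their common component of $\FS(X,Y)$ completes the proof. I would remark that although the lemma is stated for the special setting of the paper, the only property of $u,v$ actually used is that the transposition $(u\;v)$ is an automorphism of $Y$, which is how the ``same set of neighbors'' hypothesis enters.
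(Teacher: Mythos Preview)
Your proof is correct and follows essentially the same idea as the paper's: both arguments amount to observing that since $u$ and $v$ are twins, the transposition $(u\;v)$ is an automorphism of $Y$, so any friendly swap from $\tau$ remains a friendly swap from $(u\;v)\circ\tau$, giving the chain $\sigma' \sim \sigma \sim (u\;v)\circ\sigma \sim (u\;v)\circ\sigma'$. The only cosmetic difference is that you reduce to a single edge and induct, whereas the paper conjugates the entire sequence of swaps at once; also, a minor notational slip---your $\sigma' = (a\;b)\circ\sigma$ should be $\sigma' = \sigma\circ(a\;b)$ when $(a\;b)$ acts on $V(X)$---does not affect the argument.
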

\begin{proof}
    Suppose $u$ and $v$ are $(X, Y)$-exchangeable from $\sigma$. Let $S = s_1 s_2 \cdots s_m$ be a sequence of $(X, Y)$-friendly swaps that takes $\sigma$ to $\sigma'$. Namely, each $s_i$ is a transposition on $V(Y)$ and $\sigma' = s_m \circ \cdots \circ s_1 \circ \sigma$. Let $s_i'$ be $s_i$ with any appearance of $u$ replaced with $v$ and vice versa. Because $u$ and $v$ have the same neighbors in $Y$, if $v$ were standing where $u$ is on the graph $X$, $v$ would be able to perform any friendly swap $u$ can perform. Then we claim that 
    \[
    (u \; v) \circ \sigma' = s_m \circ \cdots \circ s_1 \circ (u \; v) \circ \sigma.
    \]
    Indeed, if $s_1 = (a \; u)$, then 
    \[
    s_1 \circ (u \; v) = (u \; v) \circ (a \; v) = (u \; v) \circ s_1'.
    \]
    (The same holds for $s_1 = (a \; v)$.) The claim then follows inductively. Thus, $(u \; v) \circ \sigma'$ and $(u \; v) \circ \sigma$ are also in the same component of $\FS(X, Y)$. Thus, $u$ and $v$ are $(X, Y)$-exchangeable from $\sigma$ if and only if they are $(X, Y)$-exchangeable from $\sigma'$.
\end{proof}
The next straightforward result allows us to localize our problem.
\begin{lemma}\label{lem_localization}
    Let $X$ and $Y$ be graphs on $n$ vertices and $\sigma: V(X) \to V(Y)$ a bijection. Let $X'$ be a subgraph of $X$ with vertex set $A$ and let $Y'$ be the induced subgraph of $Y$ with vertex set $\sigma(A)$. Suppose $\sigma': V(X) \to V(Y)$ agrees with $\sigma$ on $V(X) \setminus A$. If $\sigma|_A$ and $\sigma'|_A$ are in the same component of $\FS(X', Y')$, then $\sigma$ and $\sigma'$ are in the same component of $\FS(X, Y)$.
\end{lemma}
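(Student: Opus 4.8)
The plan is to lift a connecting path in $\FS(X', Y')$ to a connecting path in $\FS(X, Y)$ by padding every intermediate bijection with $\sigma$ on the complement $V(X) \setminus A$. Concretely, suppose $\sigma|_A = \tau_0, \tau_1, \ldots, \tau_m = \sigma'|_A$ is a path in $\FS(X', Y')$, so that each $\tau_{\ell+1}$ is obtained from $\tau_\ell$ by an $(X', Y')$-friendly swap across some edge $\{a_\ell, b_\ell\} \in E(X')$. I would define $\widehat{\tau}_\ell : V(X) \to V(Y)$ by setting $\widehat{\tau}_\ell|_A = \tau_\ell$ and $\widehat{\tau}_\ell|_{V(X)\setminus A} = \sigma|_{V(X)\setminus A}$. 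The first thing to check is that each $\widehat{\tau}_\ell$ is a genuine bijection: this holds because the $\FS(X', Y')$-process only permutes labels within $V(Y') = \sigma(A)$, so $\tau_\ell$ is always a bijection from $A$ onto $\sigma(A)$, whose image is disjoint from $\sigma(V(X) \setminus A)$; together these account for all of $V(Y)$.

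Next I would verify that $\widehat{\tau}_\ell$ and $\widehat{\tau}_{\ell+1}$ are adjacent in $\FS(X, Y)$ for each $\ell$. They differ only in the values at $a_\ell$ and $b_\ell$, since $\tau_\ell$ and $\tau_{\ell+1}$ do, and $\{a_\ell, b_\ell\} \in E(X') \subseteq E(X)$. Moreover $\{\tau_\ell(a_\ell), \tau_\ell(b_\ell)\}$ is an edge of $Y'$; since $Y'$ is the \emph{induced} subgraph of $Y$ on $\sigma(A)$ — this is the one place the ``induced'' hypothesis is used — this pair is also an edge of $Y$. Hence the transposition taking $\widehat{\tau}_\ell$ to $\widehat{\tau}_{\ell+1}$ is an $(X, Y)$-friendly swap. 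Since $\widehat{\tau}_0 = \sigma$ and $\widehat{\tau}_m = \sigma'$ (the latter using that $\sigma'$ agrees with $\sigma$ off $A$), we conclude that $\sigma$ and $\sigma'$ lie in the same component of $\FS(X, Y)$.

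There is no serious obstacle here; the lemma is essentially bookkeeping. The only points requiring a moment's care are that the padding produces a bijection at every step — which follows from the invariance of the label set $\sigma(A)$ under the localized process — and that the friendliness condition transfers upward, which is exactly why we need $Y'$ to be the induced subgraph on $\sigma(A)$ rather than an arbitrary subgraph of $Y$ on that vertex set.
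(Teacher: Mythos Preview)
Your argument is correct and is exactly the approach the paper takes: the paper's proof consists of the single sentence ``The sequence of friendly swaps taking $\sigma|_A$ to $\sigma'|_A$ in $\FS(X', Y')$ extends to a sequence of friendly swaps taking $\sigma$ to $\sigma'$ in $\FS(X, Y)$,'' and you have simply unpacked this extension in full detail.
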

\begin{proof}
    The sequence of friendly swaps taking $\sigma|_A$ to $\sigma'|_A$ in $\FS(X', Y')$ extends to a sequence of friendly swaps taking $\sigma$ to $\sigma'$ in $\FS(X, Y)$.
\end{proof}

For the rest of the section, assume that $X$ is a connected graph on $n$ vertices satisfying the conditions in \Cref{thm_reduction} given $t$ and $k_1, \ldots, k_t$. Since $X$ is not a tree in any of the three cases, it contains a cycle. Suppose that $C$ is a minimal cycle in $X$. Since $X$ is not a cycle, there is at least one edge of $X$ outside $C$. If $V(C) = V(X)$, this edge would split $C$ into two smaller cycles, contradicting the minimality of $C$. Thus, there is an edge connecting $C$ to a vertex $x$ outside of $V(C)$. Let $W$ denote this ``stopwatch'' subgraph (as Brunck and Kwan call it in \cite{brunck2023}) consisting of the cycle and the extra edge. Furthermore, if $t = 2$, we can choose $C$ above to be a minimal odd cycle since $X$ is non-bipartite. 
 
\begin{theorem}\label{thm_stopwatch}
    Let $W$ be a graph on $n \geq 4$ vertices consisting of an $(n-1)$-cycle and a single edge from the cycle to the remaining vertex. Then for any $t \geq 2$ and $1 \leq k_1 \leq \cdots \leq k_t < n - 1$, $\FS(W, K_{k_1, \ldots, k_t})$ is connected if and only if $n$ is even or $t > 2$.
\end{theorem}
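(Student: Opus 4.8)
The statement is an equivalence, and the two directions are of very different character. The ``only if'' direction is essentially free: the hypothesis ``$n$ is even or $t > 2$'' says exactly that at least one of $W$ and $K_{k_1,\ldots,k_t}$ fails to be bipartite. Indeed, $W$ consists of the $(n-1)$-cycle $C_{n-1}$ together with a pendant edge, and attaching a pendant never destroys bipartiteness, so $W$ is bipartite if and only if $C_{n-1}$ is an even cycle, i.e.\ if and only if $n$ is odd; meanwhile $K_{k_1,\ldots,k_t}$ is bipartite if and only if $t = 2$. Hence if $n$ is odd and $t = 2$, both $W$ and $K_{k_1,k_2}$ are bipartite, and $\FS(W, K_{k_1,k_2})$ is disconnected by \Cref{lem_bipartite_disconnected}. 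The real content of the theorem is therefore that this bipartiteness obstruction is the \emph{only} obstruction to connectedness of the stopwatch graph.

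For the ``if'' direction I would reduce to an exchangeability statement. Since $W$ is connected, $\FS(W, K_n) \cong \FS(K_n, W)$ is connected by \Cref{lem_complete}, so by \Cref{prop_exchange} (taking $Y = K_{k_1,\ldots,k_t}$ and $\widetilde{Y} = K_n$) it suffices to prove: for all $u, v \in [n]$ and every bijection $\sigma : V(W) \to [n]$ with $\{\sigma^{-1}(u), \sigma^{-1}(v)\} \in E(W)$, the vertices $u$ and $v$ are $(W, K_{k_1,\ldots,k_t})$-exchangeable from $\sigma$. If $u$ and $v$ lie in distinct partition classes they are already adjacent in $K_{k_1,\ldots,k_t}$, so by the symmetry of $K_{k_1,\ldots,k_t}$ we may take $u = s_a + 1$, $v = s_a + 2$ for some $a$ with $k_a \geq 2$; and since $u, v$ then have the same neighborhood in $K_{k_1,\ldots,k_t}$, \Cref{lem_u_v_equivalent} lets us first slide $\sigma$ to a convenient configuration within its component. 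It remains to handle two cases: $\sigma^{-1}(u)$ and $\sigma^{-1}(v)$ are two consecutive vertices of $C_{n-1}$, or one of them is the pendant vertex $x$ and the other is its cycle-neighbor $c_1$.

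The core of the proof is an explicit sequence of friendly swaps whose net effect is the transposition $(u \; v)$. The pendant vertex $x$ plays the role of a \emph{buffer}: whenever the people at $c_1$ and at $x$ lie in different classes they may be swapped, which lets us temporarily remove a person from the cycle and reinsert it on the far side of an obstruction. Two ingredients make this succeed. First, $k_t < n - 1$ forces every class to have size at most $n - 2$, so there are always at least two people outside any given class; these can be rotated around $C_{n-1}$ to ferry a chosen person to $c_1$ past runs of same-class vertices that are otherwise immovable. Second, the non-bipartiteness hypothesis provides exactly the parity freedom needed to close the construction into a single transposition rather than leaving residual scrambling: when $n$ is even, $C_{n-1}$ is an odd cycle that can be exploited, and when $t > 2$ there are three people in three different classes, giving a ``free'' $3$-cycle move; either way one overcomes the parity obstruction that is fatal in the both-bipartite case. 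Concretely, one first routes $u$ and $v$ to a standard pair of positions using the buffer, then applies a fixed local gadget that parks one of them at $x$, rotates a couple of other-class people, and reinserts it on the other side, leaving everyone else in place.

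The step I expect to be the main obstacle, and the reason this fact is proved in its own section, is making the routing phase rigorous: because $C_{n-1}$ cannot be rotated freely, one must argue through a case analysis, on the multiset of classes currently occupying the cycle and on how close $k_t$ is to $n - 2$, that a single buffer vertex genuinely supplies enough maneuvering room, with the tightest configurations (a long block of same-class people, or a class of size exactly $n-2$) and the small case $n = 4$ (where $W$ is a triangle with a pendant) treated by hand. Identifying the precise parity invariant and checking that the gadget kills it exactly under the stated hypothesis is the other delicate point.
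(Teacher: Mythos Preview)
Your ``only if'' direction is correct and is exactly how the paper disposes of it (via \Cref{lem_bipartite_disconnected}, invoked at the end of the proof).

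For the ``if'' direction your outline takes a genuinely different route from the paper. You propose to build explicit exchange sequences, using the pendant as a local buffer, and you correctly anticipate a painful case analysis on block structure and parity. The paper sidesteps all of this with a group-theoretic reduction. Each bijection is encoded as an element of $S_{k_t} \times S_{n-k_t}$ by reading off, along $(w_0, w_1, \ldots, w_{n-1})$, the relative order of the size-$k_t$ class separately from the rest. The only move that changes this encoding is the swap across the closing edge $\{w_1, w_{n-1}\}$ of the cycle; by preliminary swaps one can always park the $k_t$-class at either end of the list without disturbing the encoding, and in those two positions the closing swap acts as $\alpha = ((2\;3\cdots k_t),\,(1\;2\cdots (n-k_t))^{-1})$ or $\beta = ((1\;2\cdots k_t)^{-1},\,(2\;3\cdots (n-k_t)))$ respectively. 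The heart of the proof is then the purely algebraic \Cref{prop_alpha_beta}: $\langle \alpha,\beta\rangle$ is all of $S_k \times S_\ell$ when $k+\ell$ is even, and is the index-$2$ same-sign subgroup otherwise. This immediately gives at most two components; when $n$ is odd and $t>2$, a single swap between two distinct non-$k_t$ classes supplies the missing odd permutation and merges them.

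What the paper's approach buys is that the routing and parity difficulties you flag simply never arise: connectedness is read off from a subgroup computation rather than from an explicit swap sequence, and no special handling of long same-class blocks, of $k_t = n-2$, or of $n=4$ is needed. Your approach is not wrong in principle, but the hard part is left entirely undone, and the paper's encoding is precisely the idea that makes that case analysis unnecessary.
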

We defer the proof of \Cref{thm_stopwatch} to the next section. We also show that we can populate a cycle with any elements of $S_a$ (the partition class containing $s_a + 1$ and $s_a + 2$). 

We first introduce some terminology that we will use for the remainder of the paper. We localize our problem to some subgraph $C \subset X$ (in this current discussion, a minimal cycle in $X$) and would like to show that we can populate it with elements of $[n]$ in a particular way, paying special attention to elements of a certain partition class ($S_a$ in this case). To make our arguments easier to follow and more intuitive, we imagine that there is a fire in a house shaped like $C$. We treat the elements of $S_a$ as \blue{\emph{people}} that stand on the graph $X$, and the elements of $[n] \setminus S_a$ as \blue{\emph{empty spots}}. People can walk into empty spots, but cannot swap with each other. Empty spots of different partition classes can also swap with each other. A subset $S$ of the people are \blue{\emph{firemen}}, and we would like to get these firemen into $C$ and evacuate all other people. The set of firemen will be specified in each context.

We say that there is a \blue{\emph{fire exit}} from $u \in V(X)$ to $v \in V(X)$ if there is a path from $u$ to $v$ and $v$ is currently occupied by an empty spot. Whenever there is such a fire exit, we may move everyone down the path so that $u$ becomes the empty spot. We say that two fire exits have \blue{\emph{different exits}} if they lead to different vertices in $X$, and that they do not \blue{\emph{block}} each other if they have different exits and neither path contains the other. 
\begin{proposition}\label{prop_populate_cycle}
    Suppose $t \geq 2$ and $1 \leq k_1 \leq \cdots \leq k_t$, where $n = k_1 + \cdots + k_t \geq 4$. Let $X$ be a connected graph on $n$ vertices that is not a cycle, is not a tree, and does not contain an $(n - k_{t})$-bridge. Let $C$ be a cycle in $X$. For any subset of any partition class $S \subset S_a$ with $|V(C)| + |S_a| - n \leq |S| < |V(C)|$ and a bijection $\sigma: V(X) \to V(K_{k_1, \ldots, k_t})$, there exists a bijection $\sigma'$ in the same connected component of $\FS(X, K_{k_1, \ldots, k_t})$ as $\sigma$ such that $\sigma'(V(C)) \cap S_a = S$. 
\end{proposition}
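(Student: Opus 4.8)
We argue in the ``firemen'' picture set up just before the statement: the members of $S_a$ are the people, the remaining $n-|S_a|$ elements of $K_{k_1,\ldots,k_t}$ are empty spots, the members of $S$ are the firemen we want inside the ``house'' $C$, and the members of $S_a\setminus S$ are the people we want evacuated from $C$. The hypothesis on $|S|$ rewrites as $|S_a\setminus S|\le n-|V(C)|$, so the number of people to evict does not exceed the number of vertices outside $C$; and $|S|<|V(C)|$ guarantees that $C$ can always keep an empty spot. If $V(C)=V(X)$ then $S=S_a$ is forced and $\sigma'=\sigma$ works, so we may assume $n>|V(C)|$.

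I would first record the elementary moves available whenever $C$ carries an empty spot: (a) the occupants of $C$ may be cyclically rotated, so any person on $C$ can be slid to any prescribed vertex of $C$; (b) if $v\in V(C)$ is adjacent in $X$ to some $w\notin V(C)$, then after shuttling an empty spot to $v$ (by a rotation) or to $w$, one token may be moved across $\{v,w\}$ --- that is, a person can be pulled from $w$ onto $C$ or pushed from $C$ out to $w$; and (c) empty spots, lying in partition classes distinct from one another and from every person, can be routed anywhere in the connected graph $X$ (extending short sequences via \Cref{lem_localization}), at the cost of shifting the people along the route. Iterating (b) along a path, supplied with empties by (c), drains a person all the way in from, or pushes one all the way out to, any vertex reachable from $C$.

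The argument then splits on the size of $C$. If $|V(C)|>|S_a|$, then $C$ always has an empty spot and is large enough to hold every person at once; I would (i) drain $V(X)\setminus V(C)$ of people one at a time --- repeatedly sliding the person currently closest to $C$ onto $C$, rotating $C$ to keep freeing an attachment vertex --- until all of $S_a$ sits on $C$, then (ii) push exactly the members of $S_a\setminus S$ back out onto $V(X)\setminus V(C)$, choosing whom to expel by first rotating that person to an attachment vertex. This leaves precisely $S$ on $C$; because we drained first, we never meet a ``jammed'' pendant path holding the wrong people. If instead $|V(C)|\le|S_a|\le k_t$, then $n-|V(C)|\ge n-k_t$, so $X-V(C)$ cannot hang off $C$ by a single pendant path: that would make $X$ a tadpole containing a bridge of length $n-|V(C)|\ge n-k_t$, contradicting the hypothesis. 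Hence the part of $X$ outside $C$ has richer structure --- a second cycle, or a nontrivial $2$-edge-connected block, or two non-blocking fire exits between $C$ and the outside --- which I would use either as a larger staging region (enlarge $C$ to a connected cycle-bearing subgraph $Z$, drain everyone onto $Z$, run the same rotate/pull/push moves inside $Z$, and expel the evacuees into $V(Z)\setminus V(C)$) or to pull a fireman in along one route while pushing a misplaced person out along a disjoint one.

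The main obstacle is exactly this second case: turning the hypothesis ``$X$ has no $(n-k_t)$-bridge'' into a usable structural fact --- that every maximal chain of cut-edges in $X$ has fewer than $n-k_t$ vertices, hence that the $2$-edge-connected blocks meeting $C$ aggregate into a region large enough to stage all of $S_a$ while still leaving room outside $C$ for the evacuees --- and then carrying out the token bookkeeping so that no evacuee is left trapped behind a cut-vertex. I expect this to need a short structural lemma locating the cut-edges relative to $C$, after which the swapping is delicate but routine. The first case is essentially ``the cycle is a big enough parking lot,'' and the interchangeability of the members of $S_a$ (\Cref{lem_u_v_equivalent}) is what lets us control which people end up on $C$ by counting alone, together with the pull-in/push-out moves.
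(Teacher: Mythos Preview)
Your case split on $|V(C)|$ versus $|S_a|$, and your treatment of the large-cycle case, match the paper: when $|V(C)|>|S_a|$ the paper likewise drains every person onto $C$ first and then evicts the non-firemen one at a time through a fire exit, using the rotation of $C$ to bring the right person to the exit vertex.

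The small-cycle case is where your proposal has a genuine gap. Your structural observation that $X$ cannot be a tadpole is correct, but it does not by itself yield a procedure. Even when $X$ has ``richer structure,'' that structure may sit at the far end of a chain of cut-vertices of length up to $n-k_t-1$, and a fireman stranded there must still be threaded through. Your ``enlarge $C$ to a staging region $Z$'' idea is underspecified and can fail outright: there need not exist any connected cycle-bearing $Z$ with $|S_a|<|V(Z)|<n$ (take $C$ a triangle attached by a long path to a small fork), so you cannot in general reduce to the large-cycle case. Your alternative, ``two non-blocking fire exits between $C$ and the outside,'' is closer to what is needed, but you have not said where those exits are or why they exist in a given state.

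The paper's argument in this case is a direct induction on the distance $\ell$ from $C$ to a missing fireman $f_1$, and the no-bridge hypothesis enters dynamically rather than structurally. Keep one empty spot on $C$. If the set $Q$ consisting of $C$ together with the path to $f_1$ holds two empty spots, push $f_1$ in. If not but some fire exit from $Q$ avoids $f_1$, evacuate a non-fireman through it first. Otherwise every fire exit from $Q$ passes through $f_1$, so all vertices on the path are cut-vertices; now gather the remaining $\ge n-k_t-1$ empty spots as close to $f_1$ as possible and observe that they cannot all lie on a single path beyond $f_1$, since that path together with $x_1,\dots,x_\ell$ would constitute an $(n-k_t)$-bridge. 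Hence the empty spots fork, and one uses two spots past the fork to park $f_1$ on one side, evacuate the person at $x_{\ell-1}$ to the other, and then bring $f_1$ back to $x_{\ell-1}$. This ``fork'' step is the precise mechanism by which the bridge hypothesis is consumed, and it is what your outline is missing.

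A minor point: your appeal to \Cref{lem_u_v_equivalent} to ``control which people end up on $C$ by counting alone'' is not quite right. That lemma transfers exchangeability between configurations in the same component; it does not let you replace one element of $S_a$ on $C$ by another. The paper instead moves the specific firemen in $S$ onto $C$ directly.
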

\begin{proof}
    \textbf{Case I: $|V(C)| \geq |S_a| + 1$.} In this case, we can simply move all the people into $C$, and we will still have at least one empty spot in $C$. This can be done very easily because $X$ is connected. Now we just need to evacuate the people who are not firemen. Since there are $n - k_a \geq |V(C)| - |S|$ empty spots, there are enough empty spots to replace the evacuated people in $C$. Suppose $p_1$ is a person that needs to be evacuated. Since there must be an empty spot outside $C$, we can move the people in $C$ around so that $p_1$ is closest to the fire exit to the empty spot outside $C$ (since $X$ is connected, there exists such a fire exit). Then $p_1$ can just evacuate through this fire exit (see \Cref{fig_2}). 

    \begin{figure}[ht]
        \centering
        \begin{tikzpicture}[node distance = 1 cm, auto, scale = 0.9]
            \tikzstyle{spot} = [circle, draw, inner sep = 0pt]
            \tikzstyle{edge} = [-]
            \tikzstyle{person} = [circle, draw, fill = red!50, inner sep = 0pt]
        
            \node[person](a) at (-2, 0) {\phantom{$f_1$}};
            \node[person](b) at (-1, 1.7) {\phantom{$f_1$}};
            \node[person](c) at (1, 1.7) {\phantom{$f_1$}};
            \node[spot](d) at (2, 0) {\phantom{$f_1$}};
            \node[person](e) at (1, -1.7) {\phantom{$f_1$}};
            \node[person](f) at (-1, -1.7) {\phantom{$f_1$}};
            \node[spot](g) at (4, 3) {\phantom{$f_1$}};

            \node () at (b) {$p_1$};
            
            \draw[edge] (a)--(b)--(c)--(d)--(e)--(f)--(a);
            \path (c) edge [dotted, very thick] (g);
            \path (b) edge [bend left, dotted, very thick, ->, color = blue!50] node{$2$} (g);
            \path (c) edge [bend left, dotted, very thick, ->, color = blue!50] node{$1$} (d);
        \end{tikzpicture}
        \caption{People are nodes filled in red and empty spots white. A dotted line is a fire exit. Blue arrows indicate movement of people. Numbers on blue arrows indicate the order of the movement. In this diagram, $p_1$ can evacuate through the fire exit. (Of course, $p_1$ may not reach the empty spot. Rather, $p_1$ will push everyone down the fire exit by one step.)}
        \label{fig_2}
    \end{figure}
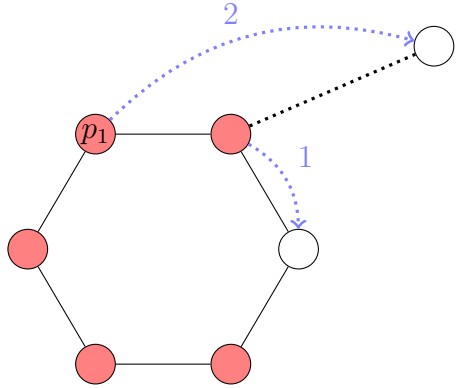

    \textbf{Case II: $|V(C)| \leq |S_a|$.} Now the house on fire does not have enough space for all the people. Our goal is to maintain an empty spot in $C$ so that the people in $C$ can move around. We can always do this as our first step because $X$ is connected. (This could come at the cost of having to remove a fireman from $C$, but we can bring them back later.) So now suppose there is an empty spot in $C$. If all firemen are already in $C$, we can just evacuate the remaining non-firemen people in $C$ as in Case I. Now suppose $f_1$ is a fireman not in $C$. Suppose $x_1 \in V(C), x_2, \ldots, x_\ell= \sigma^{-1}(f_1)$ is a shortest path from $C$ to where $f_1$ is (see \Cref{fig_3}). Let $Q = V(C) \cup \{x_2, \ldots, x_{\ell - 1}\}$ be the set of vertices ``in the way'' of $f_1$. We induct on $\ell$ and show that we can always reduce $\ell$ by $1$ while keeping the set of firemen in $C$ intact and still having an empty spot in $C$. 

    \begin{figure}[ht]
        \centering
        \begin{tikzpicture}[node distance = 2 cm, auto, scale = 0.9]
        \tikzstyle{spot} = [circle, draw, inner sep = 0pt]
        \tikzstyle{edge} = [-]
        \tikzstyle{person} = [circle, draw, fill = red!50, inner sep = 0pt]
            
        \node[person](a) at (-2, 0) {\phantom{$f_1$}};
        \node[spot](b) at (-1, 1.7) {\phantom{$f_1$}};
        \node[spot](c) at (1, 1.7) {\phantom{$f_1$}};
        \node[person](d) at (2, 0) {\phantom{$f_1$}};
        \node[person](e) at (1, -1.7) {\phantom{$f_1$}};
        \node[person](f) at (-1, -1.7) {\phantom{$f_1$}};
        \node[person](g) at (4, 0) {\phantom{$f_1$}};

        \node () at (g) {$f_1$};

        \draw[edge] (a)--(b)--(c)--(d)--(e)--(f)--(a);
        \path (d) edge [dotted, very thick] node{$\ell$} (g);
        \end{tikzpicture}
        \caption{The fireman $f_1$ is currently at distance $\ell - 1$ from $C$. The goal is to bring $f_1$ closer.}
        \label{fig_3}
    \end{figure}
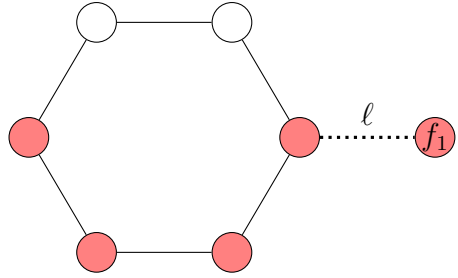
    
    \begin{enumerate}
        \item If there are at least two empty spots in $Q$, then we can simply push $f_1$ in until there is only one empty spot in $Q$ or $f_1$ is in $C$ (see \Cref{fig_4}). Note that we keep one empty spot in $C$ in the process so that people can move around inside $C$.
        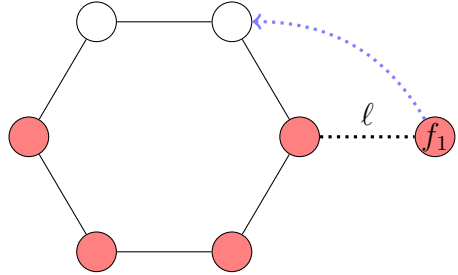
\begin{figure}[ht]
            \centering
            \begin{tikzpicture}[node distance = 2 cm, auto, scale = 0.9]
            \tikzstyle{spot} = [circle, draw, inner sep = 0pt]
            \tikzstyle{edge} = [-]
            \tikzstyle{person} = [circle, draw, fill = red!50, inner sep = 0pt]
                
            \node[person](a) at (-2, 0) {\phantom{$f_1$}};
            \node[spot](b) at (-1, 1.7) {\phantom{$f_1$}};
            \node[spot](c) at (1, 1.7) {\phantom{$f_1$}};
            \node[person](d) at (2, 0) {\phantom{$f_1$}};
            \node[person](e) at (1, -1.7) {\phantom{$f_1$}};
            \node[person](f) at (-1, -1.7) {\phantom{$f_1$}};
            \node[person](g) at (4, 0) {\phantom{$f_1$}};

            \node () at (g) {$f_1$};
                        
            \draw[edge] (a)--(b)--(c)--(d)--(e)--(f)--(a);
            \path (d) edge [dotted, very thick] node{$\ell$} (g);
            \path (g) edge [bend right, dotted, very thick, ->, color = blue!50] (c);
            \end{tikzpicture}
            \caption{If there are at least two empty spots, we can directly move $f_1$ closer to $C$.}
            \label{fig_4}
        \end{figure}

        \item Otherwise, if there is a fire exit from some vertex in $Q$ that does not go through $f_1$, we can evacuate a non-fireman through that fire exit to get case (1) (see \Cref{fig_5}). 
        \begin{figure}[ht]
            \centering
            \begin{tikzpicture}[node distance = 2 cm, auto, scale = 0.9]
            \tikzstyle{spot} = [circle, draw, inner sep = 0pt]
            \tikzstyle{edge} = [-]
            \tikzstyle{person} = [circle, draw, fill = red!50, inner sep = 0pt]
                
            \node[person](a) at (-2, 0) {\phantom{$f_1$}};
            \node[person](b) at (-1, 1.7) {\phantom{$f_1$}};
            \node[spot](c) at (1, 1.7) {\phantom{$f_1$}};
            \node[person](d) at (2, 0) {\phantom{$f_1$}};
            \node[person](e) at (1, -1.7) {\phantom{$f_1$}};
            \node[person](f) at (-1, -1.7) {\phantom{$f_1$}};
            \node[person](g) at (4, 0) {\phantom{$f_1$}};
            \node[spot](h) at (4, 3) {\phantom{$f_1$}};

            \node () at (g) {$f_1$};
            \node () at (b) {$p_1$};
                        
            \draw[edge] (a)--(b)--(c)--(d)--(e)--(f)--(a);
            \path (d) edge [dotted, very thick] node{$\ell$} (g);
            \path (c) edge [dotted, very thick] (h);
            \path (b) edge [bend left, dotted, very thick, ->, color = blue!50] (h);
            \end{tikzpicture}
            \caption{If some fire exit does not go through $f_1$, then we can evacuate a non-fireman person to get case (1).}
            \label{fig_5}
        \end{figure}
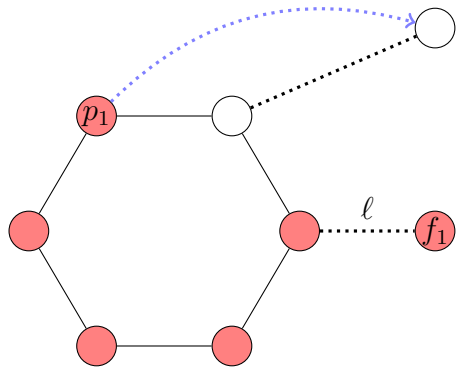

        \item Otherwise, all fire exits from $Q$ must go through $f_1$. This means that the vertices $x_1, x_2, \ldots, x_\ell$ are all cut-vertices. Indeed, if we removed any of them, there would be no path from a vertex in $C$ to an empty spot that is not on $Q$. We first move all empty spots outside $Q$ so that they are as close to $f_1$ as possible. In particular, this means that the shortest path from $f_1$ to every empty spot outside $Q$ has only one person, $f_1$, standing on it. We claim that there are two fire exits that do not block each other from $c_1$. Suppose for the sake of contradiction that this were not the case. Then all the empty spots lie on one path from $c_1$. Since there are at least $n - k_a - 1 \geq n - k_t - 1$ empty spots outside $Q$, these together with $x_1, x_2, \ldots, x_\ell$ would form a bridge of length $n - k_t - 2 +\ell\geq n - k_t$, which violates our assumption. Thus, there must be a ``fork'' in the empty spots and there must be at least two empty spots beyond the fork. We can move $f_1$ to one of the empty spots beyond the fork and evacuate the person at $x_{\ell - 1}$ to the other (see \Cref{fig_6}). Finally, we can move $f_1$ back to $x_{\ell - 1}$. When $\ell = 2$, we might not want to evacuate the person at $x_1$; instead, we can rotate the people around $C$ to evacuate a non-fireman person. 

        \begin{figure}[ht]
            \centering
            \begin{tikzpicture}[node distance = 2 cm, auto, scale = 0.9]
                \tikzstyle{spot} = [circle, draw, inner sep = 0pt]
                \tikzstyle{edge} = [-]
                \tikzstyle{person} = [circle, draw, fill = red!50, inner sep = 0pt]

                \node[person](a) at (-2, 0) {\phantom{$f_1$}};
                \node[person](b) at (-1, 1.7) {\phantom{$f_1$}};
                \node[spot](c) at (1, 1.7) {\phantom{$f_1$}};
                \node[person](d) at (2, 0) {\phantom{$f_1$}};
                \node[person](e) at (1, -1.7) {\phantom{$f_1$}};
                \node[person](f) at (-1, -1.7) {\phantom{$f_1$}};
                \node[person](g) at (5, 0) {\phantom{$f_1$}};
                \node[person](l) at (4, 0) {\phantom{$f_1$}};
                \node[spot](h) at (6, 0) {\phantom{$f_1$}};
                \node[spot](i) at (9, 0) {\phantom{$f_1$}};
                \node[spot](j) at (10, 1) {\phantom{$f_1$}};
                \node[spot](k) at (10, -1) {\phantom{$f_1$}};
                
                \node () at (g) {$f_1$};

                \draw[edge] (a)--(b)--(c)--(d)--(e)--(f)--(a);
                \path (d) edge [dotted, very thick] node{$\ell - 1$} (l);
                \draw[edge] (l)--(g)--(h);
                \draw[edge] (i)--(j);
                \draw[edge] (i)--(k);
                \path (h) edge [dotted, very thick] node{$\leq n - k_t - 3$} (i);
                \path (g) edge [bend left = 15, dotted, very thick, ->, color = blue!50] node{$1$} (j);
                \path (l) edge [bend right, dotted, very thick, ->, color = blue!50] node{$2$} (k);
                \path (j) edge [bend right = 70, dotted, very thick, ->, color = blue!50] node{$3$} (l);
            \end{tikzpicture}
            \caption{The fireman $f_1$ goes to one empty spot beyond the ``fork,'' and $p_1$ evacuates to another empty spot beyond the ``fork.''}
            \label{fig_6}
        \end{figure}
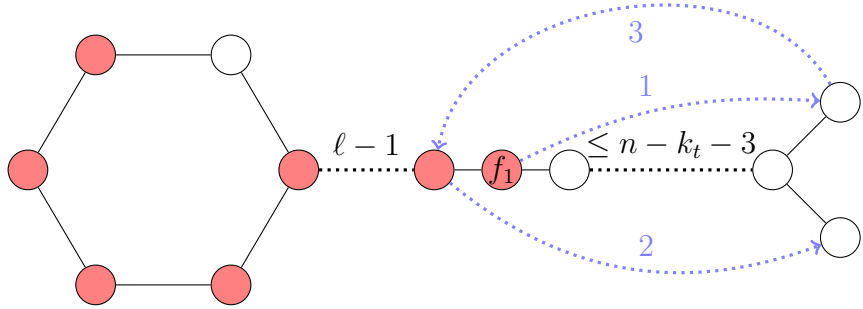
    \end{enumerate}
\end{proof}

With \Cref{prop_populate_cycle}, we are ready to prove the reduction of \Cref{thm_multi_connected}. 
\begin{proof}[Proof of \Cref{thm_reduction} assuming \Cref{thm_stopwatch}]
    Let $C$ be a minimal (odd if $t = 2$) cycle in $X$ with vertices $c_1, \ldots, c_{m}$ in clockwise order. Suppose $s_a + 1, s_a + 2 \in S_a$ are the two vertices of $K_{k_1, \ldots, k_t}$ we want to exchange from some $\sigma$ with $\{\sigma^{-1}(s_a + 1), \sigma^{-1}(s_a + 2)\} \in E(X)$. Let $S \subset S_a$ be any set containing $s_a + 1$ and $s_a + 2$ satisfying $|V(C)| + |S_a| - n \leq |S| < |V(C)|$. By \Cref{prop_populate_cycle}, we can find a sequence of friendly swaps from $\sigma$ to some $\sigma'$ such that $\sigma'(V(C)) \cap S_a = S$. 
    
    If $t = 2$ and there is only one empty spot in $C$, we can move some empty spot outside $C$ to a vertex adjacent to $C$ to form $W$. If $t > 2$ and all empty spots in $C$ are from the same partition class, we can move some empty spot of a different partition class to a vertex adjacent to $C$ to form $W$. Then the induced subgraph of $K_{k_1, \ldots, k_t}$ on $\sigma(V(W))$ is isomorphic to some $K_{k'_1, \ldots, k'_{t'}}$ with $1 \leq k'_1 \leq \cdots \leq k'_{t'} \leq |V(W)| - 1$ and $t' > 2$ whenever $t > 2$. Now \Cref{thm_stopwatch} and \Cref{lem_localization} imply that $s_a + 1$ and $s_a + 2$ are $(X, K_{k_1, \ldots, k_t})$ exchangeable from $\sigma'$. Since $s_a + 1$ and $s_a + 2$ have the same neighbors, \Cref{lem_u_v_equivalent} then implies they are $(X, K_{k_1, \ldots, k_t})$ exchangeable from $\sigma$, which finishes the proof.   
\end{proof}

\section{The proof of \Cref{thm_stopwatch}}\label{sec_stopwatch}
As we will see in this section, \Cref{thm_stopwatch} can be turned into a purely group-theoretic problem. We will make use of the following well-known facts.
\begin{lemma}\label{lem_generators}
    For $n \geq 3$, the symmetric group $S_n$ can be generated by the cycles $(1 \; 2)$ and $(1 \; 2 \cdots n)$.
\end{lemma}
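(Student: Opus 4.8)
The plan is to show that the subgroup $G \le S_n$ generated by $\tau = (1\;2)$ and $\rho = (1\;2\;\cdots\;n)$ contains every adjacent transposition $(i\;\,i{+}1)$, and then to invoke the classical fact that the adjacent transpositions generate $S_n$.

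First I would compute the conjugates of $\tau$ by powers of $\rho$. Since $\rho$ is the map $j \mapsto j+1$ (indices read in $\{1,\dots,n\}$), conjugation of a transposition relabels its entries: $\rho^k \tau \rho^{-k} = \bigl(\rho^k(1)\ \ \rho^k(2)\bigr)$. For $0 \le k \le n-2$ this is exactly $(k{+}1\ \ k{+}2)$, so $G$ contains all of $(1\;2),(2\;3),\dots,(n{-}1\;n)$.

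Next I would show these adjacent transpositions generate $S_n$. One clean way is induction on $j-i$: an arbitrary transposition $(i\;j)$ with $i<j$ satisfies $(i\;j) = (j{-}1\;j)\,(i\;\,j{-}1)\,(j{-}1\;j)$, and $(i\;\,j{-}1)$ lies in the group generated by adjacent transpositions by the inductive hypothesis (the base case $j-i=1$ being immediate); since every permutation is a product of transpositions, the adjacent transpositions generate $S_n$. (Alternatively one can argue directly via bubble sort that any permutation is brought to the identity by a sequence of adjacent swaps.) Combining this with the previous paragraph gives $G = S_n$.

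I do not expect any genuine obstacle here: the only substantive computation is the conjugation identity $\rho^k \tau \rho^{-k} = (k{+}1\ \ k{+}2)$, and the remaining input is a standard fact that can be cited or reproved in a couple of lines. The hypothesis $n \ge 3$ enters only to make the statement non-degenerate (for $n \le 2$ a single generator already suffices).
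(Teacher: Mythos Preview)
Your argument is correct: conjugating $\tau=(1\;2)$ by powers of $\rho=(1\;2\cdots n)$ yields all adjacent transpositions, and these generate $S_n$ by the standard induction you sketch. The paper itself does not prove this lemma at all---it is introduced as one of two ``well-known facts'' and simply stated without proof---so your proposal supplies strictly more than the paper does.
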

\begin{lemma}\label{lem_normal_subgroup}
    For $n \geq 3, n \neq 4$, the only proper non-trivial normal subgroup of the symmetric group $S_n$ is the alternating group $A_n$. When $n = 4$, there is another proper non-trivial normal subgroup: $\{e, (1 \; 2) (3 \; 4), (1 \; 3) (2 \; 4), (1 \; 4) (2 \; 3)\}$.
\end{lemma}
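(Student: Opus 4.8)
The plan is to deduce the statement from two classical facts: the simplicity of $A_n$ for $n \ge 5$, and the triviality of the center $Z(S_n)$ for $n \ge 3$. Both are standard, so I would cite them rather than reprove them. Let $N$ be a proper non-trivial normal subgroup of $S_n$; since $A_n$ is normal in $S_n$, the subgroup $N \cap A_n$ is normal in $A_n$, and this intersection is what I would analyze in each case.

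For $n \ge 5$, simplicity of $A_n$ forces $N \cap A_n$ to be $\{e\}$ or $A_n$. If $N \cap A_n = A_n$, then $A_n \subseteq N \subsetneq S_n$, and since $A_n$ has index $2$ in $S_n$ this gives $N = A_n$. If instead $N \cap A_n = \{e\}$, then the restriction to $N$ of the sign homomorphism $S_n \to S_n/A_n$ is injective, so $|N| \le 2$; and if $N = \{e, \tau\}$, then conjugation by any element of $S_n$ fixes the two-element set $N$ and fixes $e$, hence fixes $\tau$, so $\tau \in Z(S_n) = \{e\}$, contradicting that $N$ is non-trivial. Hence $N = A_n$ in all cases when $n \ge 5$.

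The cases $n = 3$ and $n = 4$ I would dispatch by direct inspection. For $n = 3$: a proper non-trivial normal subgroup of $S_3$ has order $2$ or $3$; the unique order-$3$ subgroup is $A_3$ (index $2$, hence normal), while the three order-$2$ subgroups are generated by the three transpositions and are mutually conjugate, so none is normal. For $n = 4$: first note the normal subgroups of $A_4$ are $\{e\}$, $A_4$, and the Klein four-group $V = \{e, (1\;2)(3\;4), (1\;3)(2\;4), (1\;4)(2\;3)\}$, and that $V$ is in fact normal in $S_4$ since it is the union of the conjugacy classes of the identity and of cycle type $2+2$. Then for $N$ proper non-trivial in $S_4$ we have $N \cap A_4 \in \{\{e\}, V, A_4\}$: the options $\{e\}$ and $A_4$ are handled exactly as above, and if $N \cap A_4 = V$ then $V \subseteq N$ forces $|N| \in \{4, 8, 12, 24\}$, where $8$ is impossible because the Sylow $2$-subgroups of $S_4$ are non-normal (there are three of them), $12$ would make $N = A_4$ (contradicting $N \cap A_4 = V$), and $24$ is not proper, leaving $N = V$. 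So the proper non-trivial normal subgroups of $S_4$ are precisely $V$ and $A_4$, which is what the lemma claims.

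The only genuinely substantial ingredient here is the simplicity of $A_n$ for $n \ge 5$; every other step is a short index or conjugacy computation, and the $n \le 4$ cases are finite checks. If a self-contained proof were wanted, essentially all of the work would move into establishing that $A_n$ is simple --- for instance via the facts that $A_n$ is generated by $3$-cycles, that all $3$-cycles are conjugate in $A_n$ for $n \ge 5$, and a commutator argument showing any non-trivial normal subgroup contains a $3$-cycle --- but since this is textbook material I would simply cite it.
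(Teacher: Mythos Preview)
Your argument is correct. The paper does not actually prove this lemma: it is introduced as one of two ``well-known facts'' and stated without proof, so there is no approach in the paper to compare against. Your deduction from the simplicity of $A_n$ for $n \ge 5$ and the triviality of $Z(S_n)$, together with the direct checks for $n=3,4$, is a standard and complete justification.
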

With these, we show that the direct product of two symmetric groups whose sizes have the same parity can be generated by two specific elements.
\begin{proposition}\label{prop_alpha_beta}
    Suppose $k,\ell\geq 2$ are integers. Let $H$ be the subgroup of $S_k \times S_\ell$ generated by the two elements
    \[
        \alpha = ((2 \; 3 \cdots k), (1 \; 2 \cdots \ell)^{-1}), \quad \text{ and } \quad 
        \beta = ((1 \; 2 \cdots k)^{-1}, (2 \; 3 \cdots \ell)).
    \]
    Then $H = S_k \times S_\ell$ if $k + \ell$ is even, and $H = \{(\pi, \tau)| \operatorname{sgn}(\pi) = \operatorname{sgn}(\tau)\}$ if $k + \ell$ is odd.
\end{proposition}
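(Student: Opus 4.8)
The plan is to show that $H$ contains the right subgroup by first projecting to each factor, then pinning down the ``diagonal'' part. First I would record two elementary observations about the projections $\pi_1 \colon S_k \times S_\ell \to S_k$ and $\pi_2 \colon S_k \times S_\ell \to S_\ell$. Since $\pi_1(H)$ contains $(2\;3\cdots k)$ and $(1\;2\cdots k)^{-1}$, and since $(1\;2\cdots k)(2\;3\cdots k)^{-1} = (1\;2)$, Lemma~\ref{lem_generators} gives $\pi_1(H) = S_k$; symmetrically $\pi_2(H) = S_\ell$. (One has to treat the degenerate cases $k = 2$ or $\ell = 2$, where $(2\;3\cdots k)$ is the identity, separately: there $\pi_1(H)$ is generated by a single $k$-cycle, but the other generator's first coordinate together with commutators will still be seen to give everything once we analyze $H$ more carefully — I expect this to be a minor but non-skippable case check.)

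Next I would pass to the commutator subgroup. Computing $[\alpha, \beta] = \alpha\beta\alpha^{-1}\beta^{-1}$ coordinatewise, the first coordinate is a commutator of a $(k-1)$-cycle and a $k$-cycle in $S_k$, which is a non-identity \emph{even} permutation, and likewise the second coordinate is a non-identity even permutation in $S_\ell$. The key structural point: let $N = H \cap (A_k \times A_\ell)$. Then $N$ is normal in $H$, and $\pi_1(N)$ is a normal subgroup of $\pi_1(H) = S_k$ contained in $A_k$; by Lemma~\ref{lem_normal_subgroup} (and direct inspection of the $A_4$ case, using that we can produce an explicit $3$-cycle in the first coordinate) one gets $\pi_1(N) = A_k$, and symmetrically $\pi_2(N) = A_\ell$. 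The hard part will be upgrading this to $N = A_k \times A_\ell$ rather than merely a subdirect product: I would use Goursat's lemma, noting that a proper subdirect subgroup of $A_k \times A_\ell$ would force an isomorphism between quotients of $A_k$ and of $A_\ell$; since $A_k$ is simple for $k \geq 5$ (and the small cases $k \in \{3,4\}$ are handled by hand, the $A_4$ exception in Lemma~\ref{lem_normal_subgroup} being the main thing to watch), the only surviving possibility is a sign-type homomorphism, which cannot occur inside $A_k \times A_\ell$ — hence $N = A_k \times A_\ell$.

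Finally I would assemble the answer. We now know $A_k \times A_\ell \leq H$ and that $H$ surjects onto both factors. Consider the homomorphism $\varphi = (\operatorname{sgn} \circ \pi_1, \operatorname{sgn} \circ \pi_2) \colon H \to \mathbb{Z}/2 \times \mathbb{Z}/2$ with kernel exactly $A_k \times A_\ell$ (here using $N = A_k\times A_\ell$). So $H / (A_k \times A_\ell) \cong \varphi(H)$, and it remains to compute $\varphi(H) = \langle \varphi(\alpha), \varphi(\beta)\rangle$. We have $\operatorname{sgn}(\alpha) = \big((-1)^{k-2}, (-1)^{\ell-1}\big) = \big((-1)^k, (-1)^{\ell+1}\big)$ and $\operatorname{sgn}(\beta) = \big((-1)^{k+1}, (-1)^\ell\big)$. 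When $k + \ell$ is even, these two elements of $\mathbb{Z}/2 \times \mathbb{Z}/2$ are $\big((-1)^k, (-1)^{k+1}\big)$ and $\big((-1)^{k+1}, (-1)^k\big)$, which are the two ``off-diagonal'' elements and hence generate all of $\mathbb{Z}/2 \times \mathbb{Z}/2$; thus $H = S_k \times S_\ell$. When $k + \ell$ is odd, both $\varphi(\alpha)$ and $\varphi(\beta)$ lie on the diagonal $\{(0,0),(1,1)\}$, so $\varphi(H)$ is that diagonal, giving $H = \{(\pi,\tau) : \operatorname{sgn}(\pi) = \operatorname{sgn}(\tau)\}$. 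I would double-check that in the odd case $H$ is not smaller than claimed — but that is exactly the content of $A_k \times A_\ell \leq H$ together with $\varphi(\alpha) = (1,1) \neq (0,0)$ (which holds since $k,\ell$ have opposite parity, so at least one coordinate of each sign is $-1$), so the diagonal is fully realized. The one genuine obstacle, as flagged, is the subdirect-product step and the attendant small-degree ($k$ or $\ell$ equal to $2$, $3$, or $4$) bookkeeping; everything else is routine generator-chasing.
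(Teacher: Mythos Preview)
Your overall architecture---check that both projections are onto, pin down $N := H \cap (A_k \times A_\ell)$, then read off $[S_k \times S_\ell : H]$ via the sign map---is sound, and the final parity computation is correct (with one harmless slip: when $k$ is even and $\ell$ is odd it is $\varphi(\beta)$, not $\varphi(\alpha)$, that hits the non-identity diagonal element). But the Goursat step has a real gap. Goursat says a subdirect product inside $A_k \times A_\ell$ is a fibre product over a common quotient $A_k/N_1 \cong A_\ell/N_2$; simplicity of $A_k$ for $k \geq 5$ only forces $N_1 \in \{\{e\}, A_k\}$, and $N_1 = \{e\}$ is \emph{not} excluded. In particular, when $k = \ell$ the graph $\{(a, \phi(a)) : a \in A_k\}$ of any automorphism $\phi$ is a proper subdirect product with both projections surjective, and nothing you wrote rules it out---your sentence ``the only surviving possibility is a sign-type homomorphism'' is simply wrong here. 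Closing this requires producing a concrete element $(\sigma, e) \in H$ with $\sigma$ even and non-trivial (or, equivalently, showing the two coordinates of some element of $H$ are not related by any automorphism of $A_k$), and that is substantive case-dependent work, not deferrable bookkeeping.

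The paper's argument is organized differently and avoids Goursat. It looks at $H_k := H \cap (S_k \times \{e\})$ and uses surjectivity of $\pi_1$ as follows: every $\pi \in S_k$ lifts to some $(\pi, f(\pi)) \in H$, and conjugating $(\sigma, e) \in H_k$ by such a lift gives $(\pi\sigma\pi^{-1}, e) \in H_k$; hence $H_k$ is normal in all of $S_k$, not merely in some projection. Lemma~\ref{lem_normal_subgroup} then forces $H_k \supseteq A_k \times \{e\}$ (once $H_k$ is seen to be non-trivial and, for $k = 4$, not the Klein four-group), after which $A_k \times A_\ell \leq H$ follows symmetrically. This one-line normality trick replaces your entire subdirect analysis and handles the $k = \ell$ case uniformly; the residual small-degree checks for $k$ or $\ell$ in $\{2,4\}$ are done explicitly. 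Your approach can be repaired, but the repair essentially reproduces this observation, so you may as well adopt it from the start.
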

\begin{proof}
    Let $A = \{(\pi, \tau)| \operatorname{sgn}(\pi) = \operatorname{sgn}(\tau)\}$ be the subgroup of $S_k \times S_\ell$ consisting of even permutations when viewed as a subgroup of $S_{k + l}$. If $k + \ell$ is odd, then $\alpha, \beta \in A$, so $H$ must be a subgroup of $A$.

    \begin{enumerate}
        \item Suppose $k = 2$ and $\ell$ is odd. Since $(1 \; 2 \cdots \ell)^{-1} (2 \; 3 \cdots \ell) = (1 2)$, \Cref{lem_generators} implies that $|H| \geq l! = |A|$. Thus, $H = A$.
        
        \item Suppose $k = 2$ and $\ell$ is even. Then $\beta^{(l-1)} = ((1 \; 2), e)$ generates $S_k \times \{e\}$. Since $(1 \; 2 \cdots \ell)^{-1} (2 \; 3 \cdots \ell) = (1 2)$, \Cref{lem_generators} implies that we can also generate $\{e\} \times S_\ell$. 
        
        \item Suppose $k =\ell= 4$. Then $\alpha^{3} = (e, (1 \; 2 \; 3 \; 4))$ and $\alpha^{4} = ((2 \; 3 \; 4), e)$. Let $\gamma = \beta \alpha^4 = ((1 \; 2), (2 \; 3 \; 4))$. Then $\gamma^{3} = ((1 \; 2), e)$ and $\gamma^{4} = (e, (2 \; 3 \; 4))$. We are then done by \Cref{lem_generators}.

        \item Now suppose $k,\ell\geq 3$ and $k \neq 4$. Let $H$ be the subgroup of $S_k \times S_\ell$ generated by $\alpha$ and $\beta$ and let $H_k = H \cap (S_k \times \{e\})$. By \Cref{lem_generators}, for any $\pi \in S_k$, there exists some $f(\pi) \in S_\ell$ so that $(\pi, f(\pi)) \in H$. For any $(\sigma, e) \in H_k$ and any $(\pi, e) \in S_k \times \{e\}$, 
        \[
            (\pi, e) (\sigma, e) (\pi, e)^{-1} = (\pi, f(\pi)) (\sigma, e) (\pi, f(\pi))^{-1} \in H_k.
        \]
        Thus $H_k$ is a normal subgroup of $(S_k \times \{e\}) \cong S_k$. By \Cref{lem_normal_subgroup}, $H_k \geq A_k \times \{e\}$. Similarly, if $H_\ell= H \cap (\{e\} \times S_\ell)$, then $H_\ell$ is a normal subgroup of $(\{e\} \times S_\ell)$. Since either $(e, (1 \; 2 \cdots \ell)) \in H$ or $(e, (2 \; 3 \cdots \ell)) \in H$, this excludes the other normal subgroup of $S_4$ in the case $\ell = 4$. Therefore, $H_\ell\geq \{e\} \times A_\ell$. It follows that $H \geq A_k \times A_\ell$, but since at least one of $\alpha, \beta$ is not in $A_k \times A_\ell$, $H \geq A$. 
    
        If $k + \ell$ is even, $\alpha \notin A$. Therefore in this case $H = S_k \times S_\ell$.
    \end{enumerate}
\end{proof}
We know prove \Cref{thm_stopwatch}.
\begin{proof}[Proof of \Cref{thm_stopwatch}]
    Suppose the vertices of the stopwatch $W$ are $w_0, w_1, \ldots, w_{n-1}$, where $w_0$ is the vertex not in the cycle, $w_1$ is its neighbor, and $w_1, \ldots, w_{n-1}$ is a clockwise ordering of the vertices in the cycle. If $k_1 = \cdots = k_t = 1$, we are already done by \Cref{lem_complete}. Now suppose $k_t > 1$. Since $k_t < n - 1$, $k_1 + \cdots + k_{t - 1} \geq 2$. 
    
    For any bijection $\sigma$, $(\sigma(w_0), \sigma(w_1), \ldots, \sigma(w_{n - 1}))$ can be viewed as a permutation in $S_{k_t} \times S_{n - k_t}$ by looking at the order of the vertices in the partition class of size $k_t$ separately from the rest. Since the only swap that changes the corresponding permutation is the one across the edge $\{w_1, w_{n - 1}\}$, we can always perform friendly swaps from $\sigma$ to move the $k_t$ vertices to occupy $\{w_0, \ldots, w_{k_t - 1}\}$ or move them to occupy $\{w_{n - k_t + 1}, \ldots, w_{n}\}$ without changing the corresponding permutation. When we are in either of these situations, the swap across $\{w_1, w_{n - 1}\}$ is a valid friendly swap and induces the permutations $\alpha = ((2 \; 3 \cdots k_t), (1 \; 2 \cdots (n-k_t))^{-1})$ and $\beta = ((1 \; 2 \cdots k_t)^{-1}, (2 \; 3 \cdots (n-k_t)))$ respectively (see \Cref{fig_7}). 

    \begin{figure}[ht]
    \centering
        \begin{tikzpicture}[node distance = 2 cm, auto, scale = 0.9]
        \tikzstyle{edge} = [-]
        \tikzstyle{person} = [circle, draw, fill = red!50, inner sep = 2pt]
        \tikzstyle{person2} = [circle, draw, fill = yellow!50, inner sep = 2pt]
        \tikzstyle{person3} = [circle, draw, fill = blue!50, inner sep = 2pt]
            
        \node[person3](a) at (-2, 0) {$2$};
        \node[person3](b) at (-1, 1.7) {$1$};
        \node[person](c) at (1, 1.7) {$3$};
        \node[person](d) at (2, 0) {$2$};
        \node[person2](e) at (1, -1.7) {$4$};
        \node[person2](f) at (-1, -1.7) {$3$};
        \node[person](g) at (4, 0) {$1$};
                    
        \draw[edge] (a)--(b)--(c)--(d)--(e)--(f)--(a);
        \draw[edge] (d)--(g);
        \path (d) edge [bend left, dotted, very thick, <->, color = blue!50] (e);

        \node[person](a') at (6, 0) {$1$};
        \node[person2](b') at (7, 1.7) {$4$};
        \node[person2](c') at (9, 1.7) {$3$};
        \node[person3](d') at (10, 0) {$2$};
        \node[person](e') at (9, -1.7) {$3$};
        \node[person](f') at (7, -1.7) {$2$};
        \node[person3](g') at (12, 0) {$1$};
                    
        \draw[edge] (a')--(b')--(c')--(d')--(e')--(f')--(a');
        \draw[edge] (d')--(g');
        \path (d') edge [bend left, dotted, very thick, <->, color = blue!50] (e');
    \end{tikzpicture}
    \caption{An example where $k_1 = k_2 = 2$ and $k_3 = 3$. The swap across $\{w_1, w_{5}\}$ induces the permutations $\alpha = ((2 \; 3), (1 \; 2 \; 3 \; 4)^{-1})$ and $\beta = ((1 \; 2 \; 3)^{-1}, (2 \; 3 \; 4))$ on $S_{k_3} \times S_{k_1 + k_2}$. }
    \label{fig_7}
    \end{figure}
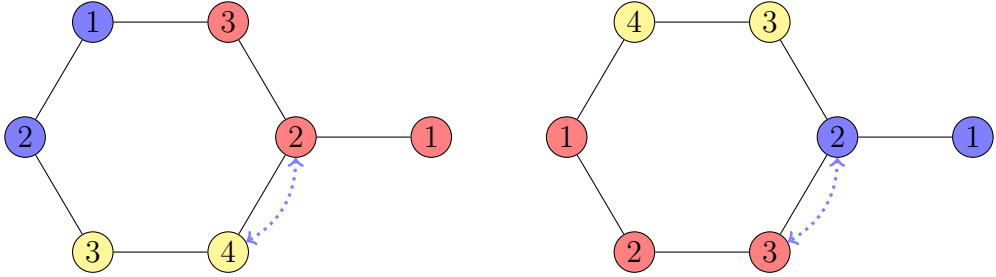

    Thus, \Cref{prop_exchange}, \Cref{lem_u_v_equivalent}, and \Cref{prop_alpha_beta} imply that $\FS(W, K_{k_1, \ldots, k_t})$ has at most two connected components. 

    If $t = 2$ and $n$ is odd, then $\FS(W, K_{k_1, \ldots, k_t})$ has exactly two connected components by \Cref{prop_alpha_beta}.
    
    If $n$ is even, then $\FS(W, K_{k_1, \ldots, k_t})$ is connected by \Cref{prop_alpha_beta}. If $t \geq 3$ and $n$ is odd, any friendly swap within the vertices not in the $k_t$ set produces a single transposition, which connects the two components and thus $\FS(W, K_{k_1, \ldots, k_t})$ is connected. 
\end{proof}

\section{Connectedness of $\FS(X, K_{k_1, \ldots, k_t})$ where $X$ is a tree}\label{sec_tree}
In this section, we solve the final piece of the puzzle: \Cref{thm_tree}. Throughout this section, we assume that $K_{k_1, \ldots, k_t}$ has vertex set $[n]$, with partition classes $S_1 = \{s_1 + 1, \ldots, s_1 + k_1\}, S_2 = \{s_2 + 1, \ldots, s_2 + k_2\}, \ldots, S_t = \{s_{t} + 1, \ldots, s_{t} + k_t\}$, where $s_1 = 0$ and $s_i = \sum_{j = 1}^{i - 1} k_j$.

Similar to how we reduced \Cref{thm_multi_connected} to \Cref{thm_reduction}, we can use \Cref{prop_exchange} to reduce \Cref{thm_tree} to the following.

\begin{theorem}\label{thm_tree_reduction}
    Suppose $t > 2$ and $1 \leq k_1 \leq \cdots \leq k_t$, where $n = k_1 + \cdots + k_t \geq 4$. Let $X$ be a tree on $n$ vertices. If $X$ does not contain an $(n - k_t)$-bridge, then for any $i, j \in [n]$ and $\sigma: V(X) \to V(K_{k_1, \ldots, k_t})$ satisfying $\{\sigma^{-1}(i), \sigma^{-1}(j)\} \in E(X)$, the vertices $i$ and $j$ are $(X, K_{k_1, \ldots, k_t})$-exchangeable from $\sigma$. 
\end{theorem}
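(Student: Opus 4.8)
## Proof proposal for Theorem~\ref{thm_tree_reduction}

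The plan is to mimic the firemen argument from \Cref{prop_populate_cycle}, but with the cycle $C$ replaced by a small subtree of $X$ on which we can actually carry out the exchange, and with the key group-theoretic input provided by a new "stopwatch-free" lemma in place of \Cref{thm_stopwatch}. By the symmetry of $K_{k_1,\ldots,k_t}$ it suffices to exchange $s_a+1$ and $s_a+2$ for some $a$ with $k_a>1$, starting from a $\sigma$ in which these two vertices sit on an edge $\{u,v\}$ of $X$. Since $X$ is a tree, removing the edge $\{u,v\}$ splits $X$ into two subtrees $X_u \ni u$ and $X_v \ni v$; write $n_u = |V(X_u)|$ and $n_v = |V(X_v)|$, so $n_u + n_v = n$. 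The condition that $X$ has no $(n-k_t)$-bridge will be used exactly to rule out the degenerate shapes where the two "people" cannot maneuver around each other.

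First I would reduce to a local picture. Using that $X$ is connected, I would move empty spots (vertices of $[n]\setminus S_a$, treating $S_a$ as people and the others as empty) so that the neighborhood of the edge $\{u,v\}$ has as many empty spots as possible; more precisely, I want to reach a configuration $\sigma'$ in the same component as $\sigma$ in which some vertex $w_u$ of $X_u$ adjacent to $u$ (other than along the $u$--$v$ edge) and some vertex $w_v$ of $X_v$ adjacent to $v$ are both empty, and moreover the small subtree $T$ consisting of the path $w_u, u, v, w_v$ (a path on $4$ vertices, $P_4$) has $\sigma'(V(T))$ missing at least two partition classes. The existence of such $w_u, w_v$ is where I must be careful: if $u$ has degree $1$ in $X$ then no such $w_u$ exists, and similarly for $v$; the no-$(n-k_t)$-bridge hypothesis, together with $t>2$, is meant to guarantee that $X$ is not so path-like that both these failures and the inability to fetch enough distinct-class empty spots happen simultaneously. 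I would case on the degrees of $u$ and $v$ and on $n_u, n_v$, handling the extreme cases (one side very small) separately, analogously to Cases~I and~II and the three sub-cases of the proof of \Cref{prop_populate_cycle}, using "fire exits" and "forks" to relocate non-relevant people out of the way.

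Next, the group-theoretic core. Once localized to the $P_4$ subgraph $T = w_u u v w_v$ with at least two partition classes absent from $\sigma'(V(T))$ --- equivalently, the induced graph $Y' = K_{k_1,\ldots,k_t}[\sigma'(V(T))]$ is a complete multipartite graph on $4$ vertices with at least $3$ parts, i.e. $K_{1,1,2}$ or $K_{1,1,1,1}$ --- I would show directly that $\FS(P_4, Y')$ is connected, so in particular $s_a+1$ and $s_a+2$ can be swapped inside it. For $Y' = K_4$ this is \Cref{lem_path}. For $Y' = K_{1,1,2}$ (the diamond), one checks by hand (or by a short generators-of-$S_2 \times S_2 \rtimes \cdots$ computation, as in \Cref{prop_alpha_beta}) that the allowed swaps on a $4$-vertex path generate enough of the symmetric group: the two middle swaps and the two end swaps together produce a transposition and a long enough cycle to invoke \Cref{lem_generators}. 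Then \Cref{lem_localization} lifts the $P_4$-exchange to an $(X, K_{k_1,\ldots,k_t})$-exchange from $\sigma'$, and \Cref{lem_u_v_equivalent} transports it back to $\sigma$ since $s_a+1$ and $s_a+2$ have identical neighborhoods in $K_{k_1,\ldots,k_t}$.

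The main obstacle I anticipate is the reduction step, not the group theory: guaranteeing that one can always reach a configuration $\sigma'$ in which the right $P_4$ (or some equally good small gadget) has two empty partition classes available \emph{and} the two target vertices lie on its central edge, \emph{using only} the hypothesis that $X$ is a tree with no $(n-k_t)$-bridge. When both $u$ and $v$ have degree $1$, the edge $\{u,v\}$ is a connected component, impossible since $X$ is a connected tree on $n\ge 4$; when exactly one of them, say $u$, has degree $1$, I must instead grow the gadget entirely inside $X_v$, and I need $X_v$ to be rich enough --- this is precisely where "$X$ is not a path" (a consequence of no $(n-2)$-bridge, hence certainly no $(n-k_t)$-bridge when $k_t \ge 2$, with the $k_t=1$ case being Wilson's star already excluded) gets used to find a branch vertex to work around. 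I would organize this as an induction on the distance from $\{u,v\}$ to the nearest vertex of degree $\ge 3$, peeling off pendant paths and relocating their occupants via fire exits, exactly in the spirit of the $\ell \to \ell-1$ induction in \Cref{prop_populate_cycle}; the bookkeeping that no step strands a needed empty spot behind a long pendant path is where the $(n-k_t)$-bridge bound enters quantitatively.
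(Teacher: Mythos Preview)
Your proposal has a genuine gap at the group-theoretic core, and the gap is fatal for the whole strategy.

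You localize to a path $P_4$ with vertices $w_u,u,v,w_v$, with $s_a+1$ and $s_a+2$ sitting on $u,v$. You then claim that $\FS(P_4,Y')$ is connected when $Y'\cong K_{1,1,2}$. But this directly contradicts \Cref{lem_path}: $\FS(P_m,Y)$ is connected if and only if $Y=K_m$, and $K_{1,1,2}\neq K_4$. More concretely, since $s_a+1$ and $s_a+2$ are in the same partition class they are non-adjacent in $K_{k_1,\ldots,k_t}$, and on a path two non-adjacent (in $Y$) labels can never pass one another: their relative left-to-right order along the path is preserved by every friendly swap. Hence $s_a+1$ and $s_a+2$ are \emph{never} $(P_4,Y')$-exchangeable, regardless of how many other partition classes you arrange to be present. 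The ``short generators computation as in \Cref{prop_alpha_beta}'' you allude to cannot save this, because that argument relies on a cycle (the stopwatch), not a path. Note also that $Y'$ can never be $K_{1,1,1,1}$ in your setup, since $s_a+1$ and $s_a+2$ already put two vertices in one class.

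The paper gets around this obstruction precisely by refusing to work on a bare path: it localizes instead to a ``snake tongue'' $Y$-shape, a path $t_1,\ldots,t_\ell$ with two extra leaves $u_1,u_2$ attached to $t_\ell$. The fork at $t_\ell$ is exactly what lets the two people $s_a+1,s_a+2$ maneuver around each other (see \Cref{prop_y_exchangeable}), using two empty spots from \emph{different} partition classes so that those empties can be swapped back afterwards. To set this up, the paper first proves \Cref{lem_tree_move_anywhere} (any two same-class labels can be brought to any prescribed edge of $X$), then \Cref{prop_snake_tongue} (move $s_a+1,s_a+2$ to a leaf edge and gather empty spots of at least two other classes into a nearby snake tongue of size $\le n-k_t+2$), and only then applies \Cref{lem_localization} and \Cref{lem_u_v_equivalent}. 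Your fire-exit/fork bookkeeping is in the right spirit for this preparatory phase, but the gadget you reduce to must have a vertex of degree $\ge 3$; a $P_4$ cannot work.
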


Again, it suffices to consider the case $(i, j) = (s_a + 1, s_a + 2)$ for any $a \in [t]$ with $k_a > 1$. Suppose $\sigma^{-1}(s_a + 1) = t_1$ and $\sigma^{-1}(s_a + 1) = t_2$ Since $X$ is now a tree, we cannot localize the problem to a minimal cycle. Instead, we find what Brunck and Kwan called in \cite{brunck2023} a ``snake tongue'': a graph $Y$ consisting of a path $t_1, \ldots, t_\ell$ with $t_\ell$ adjacent to two other vertices $u_1$ and $u_2$. As in the proof of \Cref{thm_reduction}, we show that we can find such a graph in $X$ and correctly populate it with vertices of $K_{k_1, \ldots, k_t}$. We first prove that we can move any two elements in $[n]$ of the same partition class to any two adjacent vertices in $X$.

\begin{lemma}\label{lem_tree_move_anywhere}
    Suppose $t \geq 2$ and $1 \leq k_1 \leq \cdots \leq k_t$, where $n = k_1 + \cdots + k_t \geq 4$. Fix any $i, j \in [n]$ of the same partition class of size $k_a$, $\{u, v\} \in E(X)$, and $\sigma: V(X) \to [n]$. If $X$ is a tree that does not contain an $(n - k_a)$-bridge, then there exists some $\sigma'$ with $\sigma'(\{u, v\}) = \{i, j\}$ that is in the same connected component of $\FS(X, K_{k_1, \ldots, k_t})$ as $\sigma$. 
\end{lemma}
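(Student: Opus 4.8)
\emph{Plan.} Adopt the fire metaphor: the elements of the size-$k_a$ partition class $S_a$ (which contains $i$ and $j$) are the people, the elements of $[n]\setminus S_a$ are the empty spots, and $\{i,j\}$ are the firemen; the goal is to reach a bijection placing $i$ and $j$ on the two adjacent vertices $u,v$. We may assume there are at least two empty spots, i.e.\ $n-k_a\geq 2$: if $n-k_a=1$ then an $(n-k_a)$-bridge is a cut-vertex and every tree on $n\geq 3$ vertices has one, so the hypothesis is vacuous; and $n-k_a=0$ is impossible because $t\geq 2$. The single elementary move used throughout is: to push a person one step along its unique $X$-path toward a target, it suffices to free the next vertex $w$ of that path, which we do by routing an empty spot to $w$ --- and this can be done without disturbing the pushed person whenever the branch at its current vertex that contains $w$ already holds an empty spot.

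The plan is to place the firemen one at a time. First drive $i$ to $u$; then, with $u$ treated as occupied, drive $j$ to $v$ through the component of $X-u$ containing $v$ (returning $i$ to $u$ afterward if the argument needs to displace it temporarily). Either stage can only stall in the ``stuck'' configuration where every empty spot is separated from the vertex being freed by the pushed person --- that is, every fire exit runs through that person. Exactly as in case (3) of the proof of \Cref{prop_populate_cycle}, in such a configuration the pushed person, together with the chain of cut-vertices leading to the empty spots and the empty spots themselves (pushed as close as possible), forms a path all of whose edges are cut-edges; counting its vertices shows that it would contain an $(n-k_a)$-bridge, contradicting the hypothesis. Hence there is always a ``fork'' --- a vertex with empty spots in at least two of its branches --- around which the pushed person can be shuttled so as to make progress. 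Since we only need $\{i,j\}$ to occupy $\{u,v\}$ with no prescribed assignment, we may freely swap the roles of $u$ and $v$ (and of the two stages), and in particular choose the order of placement according to the structure of $X$ near $\{u,v\}$; this flexibility also shows there is no parity obstruction.

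The step I expect to be the main obstacle is making the fork argument completely rigorous across all configurations: the same bridge-length bookkeeping as in \Cref{prop_populate_cycle}, but now with no ambient cycle available as maneuvering room, so that one is effectively always in ``case (3)''. In particular one must verify that deleting the edge $uv$ (or freezing a fireman) never isolates the remaining work inside a subtree that is too path-like for a suitable fork to exist there --- this is precisely where the no-$(n-k_a)$-bridge hypothesis gets applied to the relevant subtree --- and one must dispose of the small base cases, none of which should require an idea beyond the counting already used for \Cref{prop_populate_cycle}.
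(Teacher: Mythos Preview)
Your decomposition differs from the paper's. The paper proceeds in two phases: first bring $i$ and $j$ \emph{together} (shrink the $i$--$j$ path in $X$ to a single edge), and only then slide the adjacent pair toward $\{u,v\}$, reducing the length of the path $u,v,\ldots,\sigma^{-1}(i),\sigma^{-1}(j)$ one step at a time while keeping $i$ and $j$ adjacent throughout. Keeping them adjacent is the key device: neither fireman ever blocks the other, and in every stuck configuration the fork/bridge count is taken over \emph{all} $n-k_a$ empty spots of $X$, so the no-$(n-k_a)$-bridge hypothesis applies directly (this is the content of the paper's three subcases in Phase~2).

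Your one-at-a-time plan has a genuine gap in its second stage. After freezing $i$ at $u$, your ``stuck'' analysis assumes that every empty spot is separated from the vertex being freed only by the pushed person $j$; but now $i$ at $u$ is a second obstruction, and empty spots lying in other components of $X-u$ are inaccessible to $j$ without displacing $i$. Consequently your bridge count in the stuck case may fall short of $n-k_a$ --- only the empty spots on $j$'s side of $u$ contribute --- so the hypothesis on $X$ need not produce a fork inside the subtree where $j$ is confined. You flag exactly this worry (``freezing a fireman never isolates the remaining work inside a subtree that is too path-like''), but the hypothesis is on $X$, not on the subtree, and does not transfer automatically. The ``displace $i$ temporarily and return it'' patch is also not clearly terminating when $u$ has degree~$2$: pushing $j$ through $u$ forces $i$ toward $v$, and restoring $i$ can push $j$ back. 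The paper's ``together first'' trick sidesteps all of this: once $i,j$ are adjacent they shuttle as a unit, and the full set of empty spots is always available for the fork argument.
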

\begin{proof}
    If $X$ is a path, then $K_{k_1, \ldots, k_t}$ is the complete graph and we are done by \Cref{lem_path}. Now suppose $X$ contains a vertex of degree at least $3$. 
    
    We first bring $i$ and $j$ together. Let $(x_1 = \sigma^{-1}(i), x_2, \ldots, x_\ell= \sigma^{-1}(j))$ be the path from where $i$ is to where $j$ is. If $\ell = 2$, we are already done. We now show that $\ell$ can be decreased whenever $\ell > 2$. We treat the elements in the same partition class as $i$ and $j$ as people and others as empty spots.

    If there is an empty spot between $x_1$ and $x_\ell$, we can directly move $i$ and $j$ closer together. Otherwise, as long as there is a fire exit from $x_1$ not passing through $x_2$, we can move all the $\ell$ people on the path down the fire exit. This procedure does not change $\ell$. 
    
    Now we can assume that all fire exits from $x_1$ pass through $x_2$. If there is a fire exit from some $x_a$ not passing through $x_\ell$, then we can move the person at $x_a$ out of the path and reduce $\ell$ by $1$. So suppose that all fire exits from each $x_a$ pass through $x_\ell$. We bring all empty spots so that they are as close to $x_\ell$ as possible. As in the proof of \Cref{prop_populate_cycle}, 
    these empty spots cannot be in one path, since they would form a $(n - k_a)$ bridge with $x_\ell$. Thus, they must have a fork, and we can move $j$ into one vertex beyond the fork and the person at $x_{\ell-1}$ into another vertex beyond the fork (see \Cref{fig_8}). Then, we can move $j$ to $x_{\ell - 1}$ to decrease $\ell$ by $1$. 

    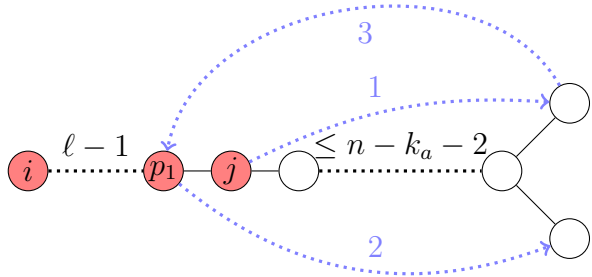
\begin{figure}[ht]
            \centering
            \begin{tikzpicture}[node distance = 2 cm, auto, scale = 0.9]
                \tikzstyle{spot} = [circle, draw, inner sep = 0pt]
                \tikzstyle{edge} = [-]
                \tikzstyle{person} = [circle, draw, fill = red!50, inner sep = 0pt]

                \node[person](d) at (2, 0) {\phantom{$f_1$}};
                \node[person](g) at (5, 0) {\phantom{$f_1$}};
                \node[person](l) at (4, 0) {\phantom{$f_1$}};
                \node[spot](h) at (6, 0) {\phantom{$f_1$}};
                \node[spot](i) at (9, 0) {\phantom{$f_1$}};
                \node[spot](j) at (10, 1) {\phantom{$f_1$}};
                \node[spot](k) at (10, -1) {\phantom{$f_1$}};
                
                \node () at (d) {$i$};
                \node () at (g) {$j$};
                \node () at (l) {$p_1$};

                \path (d) edge [dotted, very thick] node{$\ell - 1$} (l);
                \draw[edge] (l)--(g)--(h);
                \draw[edge] (i)--(j);
                \draw[edge] (i)--(k);
                \path (h) edge [dotted, very thick] node{$\leq n - k_a - 2$} (i);
                \path (g) edge [bend left = 15, dotted, very thick, ->, color = blue!50] node{$1$} (j);
                \path (l) edge [bend right, dotted, very thick, ->, color = blue!50] node{$2$} (k);
                \path (j) edge [bend right = 70, dotted, very thick, ->, color = blue!50] node{$3$} (l);
            \end{tikzpicture}
            \caption{The person $j$ goes to one empty spot beyond the ``fork,'' and $p_1$ evacuates to another empty spot beyond the ``fork.''}
            \label{fig_8}
        \end{figure}

    Now we may assume $i$ and $j$ are adjacent in $X$. Let $y_1 = u, y_2 = v, \ldots, y_{m - 1} = \sigma^{-1}(i), y_{m} = \sigma^{-1}(j)$ be a path from $u$ to $\sigma^{-1}(j)$ (at least one of the four paths from $\{u, v\}$ to $\{\sigma^{-1}(i), \sigma^{-1}(j)\}$ will contain all four vertices). If $m = 2$, we are done. If there is a fire exit from some $y_a$ not passing through $y_{m - 1}$ or $y_{m}$, we can move the person at $y_a$ out of the way and bring $i, j$ closer to $u, v$. If there is an empty spot at some $y_a$ for $a \leq m-2$, then we can also move $i, j$ closer to $u, v$. Now we have the following possibilities. Notice that there are at least three empty spots since there are at least three people if $m > 2$.
    \begin{enumerate}
        \item If there are at least two fire exits that are disjoint except possibly for the vertex they leave from, then since there are at least three empty spots, we can move $i, j$ into fire exits and leave another disjoint fire exit open. Then we can evacuate the person at $y_{m - 2}$. Notice that when $i, j$ go back to $y_{m - 2}, y_{m - 1}$, they might be in a different order, but this is not a problem for us (see \Cref{fig_9}).
        \begin{figure}[ht]
            \centering
            \begin{tikzpicture}[node distance = 2 cm, auto, scale = 0.9]
                \tikzstyle{spot} = [circle, draw, inner sep = 0pt]
                \tikzstyle{edge} = [-]
                \tikzstyle{person} = [circle, draw, fill = red!50, inner sep = 0pt]

                \node[person](c) at (1, 0) {\phantom{$f_1$}};
                \node[person](d) at (3, 0) {\phantom{$f_1$}};
                \node[person](g) at (5, 0) {\phantom{$f_1$}};
                \node[person](l) at (4, 0) {\phantom{$f_1$}};
                \node[spot](a) at (6, 2) {\phantom{$f_1$}};
                \node[spot](b) at (7, 2) {\phantom{$f_1$}};
                \node[spot](e) at (8, 0) {\phantom{$f_1$}};

                \node () at (g) {$j$};
                \node () at (l) {$i$};
                \node () at (d) {$p_1$};

                \draw[edge] (d)--(l);
                \path (c) edge [dotted, very thick] node{$m-2$} (d);
                \path (l) edge [dotted, very thick] (a);
                \path (g) edge [dotted, very thick] (e);
                \draw[edge] (l)--(g);
                \draw[edge] (a)--(b);
                \path (l) edge [bend left = 70, dotted, very thick, ->, color = blue!50] node{$1$} (b);
                \path (g) edge [bend right, dotted, very thick, ->, color = blue!50] node{$2$} (a);
                \path (d) edge [bend right, dotted, very thick, ->, color = blue!50] node{$3$} (e);
            \end{tikzpicture}
            \caption{If there are two disjoint fire exits, we can move $i, j$ out of the way while still leaving a fire exit open for $p_1$.}
            \label{fig_9}
        \end{figure}
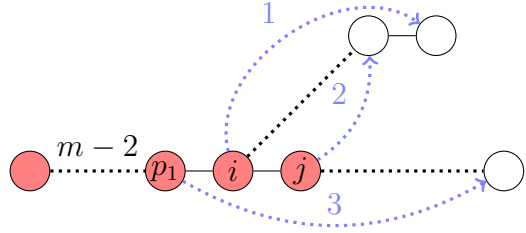
        \item If all fire exits from $y_{m}$ go through $y_{m - 1}$ and they all pass through one empty spot beyond $y_{m - 1}$, then we first bring all empty spots to be as close to $y_{m - 1}$ as possible. As in the proof of \Cref{prop_populate_cycle}, these empty spots cannot be in one path and must fork. We can bring $i$ to one of the empty spots and evacuate the person at $y_{m - 2}$. 
        \begin{figure}[ht]
            \centering
            \begin{tikzpicture}[node distance = 2 cm, auto, scale = 0.9]
                \tikzstyle{spot} = [circle, draw, inner sep = 0pt]
                \tikzstyle{edge} = [-]
                \tikzstyle{person} = [circle, draw, fill = red!50, inner sep = 0pt]

                \node[person](a) at (1, 0) {\phantom{$f_1$}};
                \node[person](d) at (3, 0) {\phantom{$f_1$}};
                \node[person](g) at (5, -1) {\phantom{$f_1$}};
                \node[person](l) at (4, 0) {\phantom{$f_1$}};
                \node[spot](h) at (5, 0) {\phantom{$f_1$}};
                \node[spot](i) at (8, 0) {\phantom{$f_1$}};
                \node[spot](j) at (9, 1) {\phantom{$f_1$}};
                \node[spot](k) at (9, -1) {\phantom{$f_1$}};
                
                \node () at (d) {$p_1$};
                \node () at (g) {$j$};
                \node () at (l) {$i$};

                \path (a) edge [dotted, very thick] node{$m - 2$} (d);
                \draw[edge] (d)--(l)--(h);
                \draw[edge] (l)--(g);
                \draw[edge] (i)--(j);
                \draw[edge] (i)--(k);
                \path (h) edge [dotted, very thick] node{$\leq n - k_a - 2$} (i);
                \path (l) edge [bend left, dotted, very thick, ->, color = blue!50] node{$1$} (j);
                \path (d) edge [bend right = 40, dotted, very thick, ->, color = blue!50] node{$2$} (k);
            \end{tikzpicture}
            \caption{We can evacuate $p_1$ using the two empty spots beyond the fork. }
            \label{fig_10}
        \end{figure}
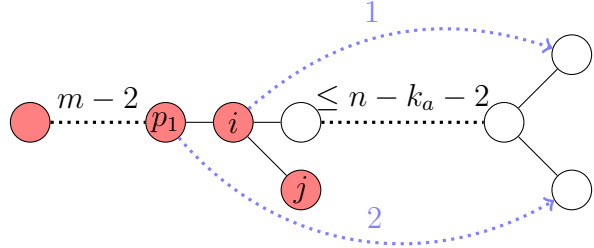
        \item If all fire exits from $y_{m-1}$ go through $y_{m}$ and they all pass through one empty spot beyond $y_{m}$, then we first bring all empty spots to be as close to $y_{m}$ as possible. Now, these empty spots cannot be in one path and must fork, and there must be at least three empty spots beyond the fork because $y_{m - 1}$ is also part of the bridge. We can bring $i, j$ to two of the empty spots and evacuate the person at $y_{m - 2}$ to the third. 
        \begin{figure}[ht]
            \centering
            \begin{tikzpicture}[node distance = 2 cm, auto, scale = 0.9]
                \tikzstyle{spot} = [circle, draw, inner sep = 0pt]
                \tikzstyle{edge} = [-]
                \tikzstyle{person} = [circle, draw, fill = red!50, inner sep = 0pt]

                \node[person](a) at (1, 0) {\phantom{$f_1$}};
                \node[person](d) at (3, 0) {\phantom{$f_1$}};
                \node[person](g) at (5, 0) {\phantom{$f_1$}};
                \node[person](l) at (4, 0) {\phantom{$f_1$}};
                \node[spot](h) at (6, 0) {\phantom{$f_1$}};
                \node[spot](i) at (9, 0) {\phantom{$f_1$}};
                \node[spot](j) at (10, 1) {\phantom{$f_1$}};
                \node[spot](k) at (10, -1) {\phantom{$f_1$}};
                \node[spot](n) at (11, 1) {\phantom{$f_1$}};
                
                \node () at (d) {$p_1$};
                \node () at (g) {$j$};
                \node () at (l) {$i$};

                \path (a) edge [dotted, very thick] node{$m - 2$} (d);
                \draw[edge] (d)--(l)--(g)--(h);
                \draw[edge] (i)--(j)--(n);
                \draw[edge] (i)--(k);
                \path (h) edge [dotted, very thick] node{$\leq n - k_t - 3$} (i);
                \path (g) edge [bend left = 40, dotted, very thick, ->, color = blue!50] node{$1$} (n);
                \path (l) edge [bend left, dotted, very thick, ->, color = blue!50] node{$2$} (j);
                \path (d) edge [bend right = 20, dotted, very thick, ->, color = blue!50] node{$3$} (k);
            \end{tikzpicture}
            \caption{Since $y_{m-1}$ is also a part of the bridge, there must be at least three empty spots beyond the fork.}
            \label{fig_11}
        \end{figure}
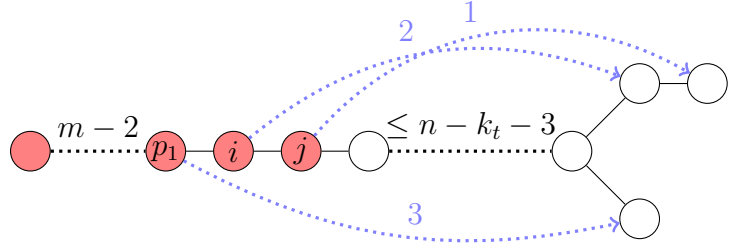
    \end{enumerate}
\end{proof}

With this lemma, we can directly prove the following.
\begin{proposition}\label{prop_snake_tongue}
    Suppose $X$ and $K_{k_1, \ldots, k_t}$ satisfy the same assumptions as in \Cref{thm_tree_reduction}. Let $s_a + 1, s_a + 2 \in S_a$ be two vertices of $K_{k_1, \ldots, k_t}$ of the same partition class and $\sigma$ a bijection with $\{\sigma^{-1}(s_a + 1), \sigma^{-1}(s_a + 2)\} \in E(X)$. Then there exists a snake tongue subgraph $Y \subset X$ with at most $n - k_t + 2$ vertices and a bijection $\sigma'$ in the same connected component of $\FS(X, K_{k_1, \ldots, k_t})$ as $\sigma$ such that $\sigma'^{-1}(S_a) \cap V(Y) = \{\sigma^{-1}(s_a + 1), \sigma^{-1}(s_a + 2)\}$ and $|\sigma'^{-1}(S_b) \cap V(Y)| > 0$ for at least two values of $b \neq a$.
\end{proposition}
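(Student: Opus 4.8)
The plan is to first find a suitable snake tongue $Y \subset X$ containing the edge $e_0 := \{\sigma^{-1}(s_a+1),\sigma^{-1}(s_a+2)\}$ as its ``tip,'' obeying the size bound, and then to shuffle the occupants of $Y$ into the required shape using the firefighting and sliding machinery already built. As noted before the statement, it suffices to take $(i,j)=(s_a+1,s_a+2)$ with $k_a>1$; then $k_t\ge k_a>1$, so $X$ is not a path (a path on $n$ vertices with $k_t\ge 2$ contains an $(n-k_t)$-bridge, e.g.\ on its vertices numbered $2,\dots,n-k_t+1$), hence $X$ has a vertex of degree at least $3$. Write $x_1=\sigma^{-1}(s_a+1)$ and $x_2=\sigma^{-1}(s_a+2)$, so $e_0=\{x_1,x_2\}\in E(X)$.

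Next I would construct $Y$. Deleting $e_0$ splits $X$ into subtrees $A\ni x_1$ and $B\ni x_2$; after possibly relabelling $s_a+1\leftrightarrow s_a+2$ (and hence $x_1\leftrightarrow x_2$, $A\leftrightarrow B$), I may assume $X$ has a vertex of degree $\ge 3$ lying in $B$ or equal to $x_2$ — if every such vertex lies in $A$ or equals $x_1$, we relabel, and if there is no such vertex at all then $X$ is a path. If $\deg_X(x_2)\ge 3$, let $Y$ be the snake tongue with stem $t_1=x_1,\ t_2=x_2$ and branches $u_1,u_2$ any two neighbours of $x_2$ other than $x_1$; since $t>2$ forces $n-k_t\ge 2$, this gives $|V(Y)|=4\le n-k_t+2$. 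Otherwise $\deg_X(x_2)=2$; let $w$ be a vertex of degree $\ge 3$ in $B$ at minimal distance from $x_2$, and let $x_2=t_2,t_3,\dots,t_\ell=w$ be the path from $x_2$ to $w$ in $B$ (so $\ell\ge 3$). Take $Y$ to be the snake tongue with stem $t_1=x_1,t_2,\dots,t_\ell$ and branches $u_1,u_2$ two neighbours of $w$ other than $t_{\ell-1}$. Since $X$ is a tree there are no coincidences among these vertices, so $Y$ is a genuine snake tongue ($\ell\ge 2$) with $e_0=\{t_1,t_2\}\in E(Y)$. Now $t_2,t_3,\dots,t_{\ell-1}$ all have degree $2$ in $X$ ($t_2=x_2$ by our case, the others because $w$ is the nearest branch vertex), while $w=t_\ell$ has degree $\ge 3$; hence $t_2,\dots,t_\ell$ is an $(\ell-1)$-bridge of $X$. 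Since any graph with an $m$-bridge has a $j$-bridge for every $2\le j\le m$, and $X$ has no $(n-k_t)$-bridge while $n-k_t\ge 2$, we get $\ell-1\le n-k_t-1$, i.e.\ $|V(Y)|=\ell+2\le n-k_t+2$.

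Then I would produce $\sigma'$. Put $Z=V(Y)\setminus\{x_1,x_2\}$, so $|Z|=\ell\ge 2$ and, by the bound just proved, $|V(X)\setminus V(Y)|\ge k_t-2\ge k_a-2$. Fix distinct $b,c\in[t]\setminus\{a\}$ (possible since $t-1\ge 2$) and choose $\ell$ elements of $[n]\setminus S_a$ among which one lies in $S_b$ and one in $S_c$. The target is the bijection that leaves two elements of $S_a$ on $\{x_1,x_2\}$, places those $\ell$ chosen elements on $Z$, and pushes the $k_a-2$ remaining elements of $S_a$ off $V(Y)$ (there is room, by the count above). That this target lies in the component of $\sigma$ follows from the arguments in the proofs of \Cref{prop_populate_cycle} and \Cref{lem_tree_move_anywhere}: treat two elements of $S_a$ as firemen, the rest of $S_a$ as ordinary people, and $[n]\setminus S_a$ as empty spots; evacuate the ordinary people out of $Y$ through fire exits, the key point being, exactly as there, that the empty spots cannot all lie on a single path leaving $Y$ — otherwise, together with a portion of the stem of $Y$, they would form an $(n-k_t)$-bridge of $X$ — and finally slide the chosen representatives of $S_b$ and $S_c$ into $Z$, which is always possible since each such slide swaps elements of distinct partition classes. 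This yields $\sigma'$ with $\sigma'^{-1}(S_a)\cap V(Y)=\{x_1,x_2\}$ and $S_b,S_c$ both represented in $V(Y)$.

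The hard part is the snake tongue construction, in particular making the size bound $|V(Y)|\le n-k_t+2$ land exactly: it hinges on recognising that the stem of $Y$ from $x_2$ to the nearest branch vertex is a bridge of $X$ and then squeezing it with the no-$(n-k_t)$-bridge hypothesis, and on separately disposing of the degenerate case in which $x_1$ or $x_2$ is itself a branch vertex (where the bridge argument is vacuous and one leans on $n-k_t\ge 2$). The final shuffling step is routine given \Cref{prop_populate_cycle} and \Cref{lem_tree_move_anywhere}, so I would mostly cite those rather than redo the fire-exit case analysis.
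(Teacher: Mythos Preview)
Your approach differs from the paper's in a substantive way. You fix the snake tongue $Y$ first, anchored at the original edge $\{\sigma^{-1}(s_a+1),\sigma^{-1}(s_a+2)\}$, and then try to rearrange the occupants of $Y$. The paper instead applies \Cref{lem_tree_move_anywhere} up front to move $s_a+1,s_a+2$ to a leaf of $X$ and its neighbour; it then pushes all non-$S_a$ elements as close to that leaf as possible, observes that they must fork (otherwise they would line up with the leaf's neighbour into an $(n-k_t)$-bridge), and simply reads $Y$ off from this fork. With that order of operations the shuffling step is free: once the empty spots are packed against the leaf, the snake tongue automatically has only two $S_a$-elements and, since empty spots of different classes can pass through one another, two other classes are represented.

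Your construction of $Y$ and the size bound via the $(\ell-1)$-bridge argument are correct and cleanly written. The soft spot is the last paragraph. The evacuation you need---clear all $S_a$-elements from $Z=V(Y)\setminus\{x_1,x_2\}$ while finishing with $S_a$-elements at $\{x_1,x_2\}$---is not an instance of \Cref{prop_populate_cycle} (that lemma populates a cycle, and its Case~II explicitly uses rotation around the cycle) nor of \Cref{lem_tree_move_anywhere} (which brings two elements to a prescribed edge but says nothing about who else ends up where). For example, if $x_1$ is a leaf and $u_1$ is also a leaf carrying an $S_a$-element, that element can only leave through $t_\ell$, which forces you to displace the person at $x_2$ and then restore the configuration; this works, but it needs its own short argument, not a citation. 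Your ``fork'' sentence is also off-target: the fork you would need for evacuation lies \emph{outside} $Y$, whereas the fork you actually built sits at $t_\ell$ inside $Y$. None of this is fatal---the claim is true and the techniques are the same flavour---but as written the shuffling step is a gesture. The paper sidesteps the whole issue by choosing $Y$ only after the empty spots have already been marshalled into place.
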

\begin{proof}
    By \Cref{lem_tree_move_anywhere}, we can move $s_a + 1, s_a + 2$ to a leaf in $X$ and its neighbor. Now we move all empty spots so that they are as close to $s_a + 1$ and $s_a + 2$ as possible. The empty spots again must fork, and since we can move empty spots of different partition classes through each other, we get the desired snake tongue graph $Y$. 
\end{proof}

Finally, we prove that we can sort things out in a ``Y''-shaped graph. 
\begin{proposition}\label{prop_y_exchangeable}
    Suppose $Y$ is a snake-tongue graph with vertices $(t_1, \ldots, t_\ell, u_1, u_2)$ with $\ell \geq 2$. Suppose $t \geq 2$, $1 \leq k_1 \leq \cdots \leq k_t$ and $\sum_{i = 1}^{t} = \ell$. Let $K_{2, k_1, \ldots, k_t}$ have vertex set $[n]$ with the first partition class $\{1, 2\}$. Then for any bijection $\sigma$, $1$ and $2$ are $(Y, K_{2, k_1, \ldots, k_t})$-exchangeable from $\sigma$. 
\end{proposition}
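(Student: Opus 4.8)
The plan is to reduce to one convenient configuration and then write down an explicit sequence of friendly swaps that interchanges $1$ and $2$. Since $1$ and $2$ lie in a common partition class of $K_{2,k_1,\ldots,k_t}$, they have the same neighborhood there, so by \Cref{lem_u_v_equivalent} it suffices to produce a single bijection $\sigma'$, reachable from the given $\sigma$ by friendly swaps, from which $1$ and $2$ are $(Y,K_{2,k_1,\ldots,k_t})$-exchangeable. For the normalization step, note that the snake tongue $Y$ has $n=\ell+2$ vertices, is a tree, and its longest bridge is $t_2,t_3,\ldots,t_\ell$, an $(n-3)$-bridge; in particular $Y$ has no $(n-2)$-bridge, so \Cref{lem_tree_move_anywhere} (with $k_a=2$) applies and lets us move $1$ and $2$ onto the edge $\{t_\ell,u_1\}$. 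One further friendly swap across $\{t_\ell,u_2\}$, legal because the token sitting there is not in $\{1,2\}$, moves $1$ to $u_1$ and $2$ to $u_2$ (using the symmetry of $u_1,u_2$ in $Y$), with all $\ell$ remaining tokens — none of which lies in the class $\{1,2\}$ — occupying the path $t_1,\ldots,t_\ell$.

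Now comes the main step, a \emph{siding maneuver}. Let $w$ be the token at $t_\ell$ and $z$ the token at $t_{\ell-1}$. If $w$ and $z$ lie in different partition classes (hence are adjacent in the target), I claim the length-$7$ sequence of friendly swaps across $\{t_\ell,u_1\},\{t_\ell,t_{\ell-1}\},\{t_\ell,u_2\},\{t_\ell,u_1\},\{t_\ell,t_{\ell-1}\},\{t_\ell,u_2\},\{t_\ell,t_{\ell-1}\}$ is valid and has net effect exactly the transposition $(1\;2)$, fixing every other token; I would verify this by tracking the configuration through the seven steps, checking that at each step the two tokens exchanged are either one of $1,2$ together with a token outside $\{1,2\}$, or the pair $\{w,z\}$. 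Conceptually this uses the cherry $\{u_1,u_2\}$ as a ``siding'' and $t_\ell$ as a hub to route $1$ and $2$ past one another, which is impossible on a path or a cycle, so the degree-$3$ vertex $t_\ell$ is essential.

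It remains to handle the case where $w$ and $z$ lie in the same partition class $S_b$ (then $|S_b|\ge 2$, which forces $\ell\ge 3$ since otherwise $k_1=\cdots=k_t=1$). Here I would invoke the hypothesis $t\ge 2$: there is a token $r$ lying in neither $S_b$ nor $\{1,2\}$, and $r$ sits at some $t_j$ with $j\le\ell-2$; taking $r$ to be the such token of largest index, the vertices $t_{j+1},\ldots,t_\ell$ all carry $S_b$-tokens, so we may ``bubble'' $r$ rightward to $t_{\ell-1}$ by the friendly swaps across $\{t_j,t_{j+1}\},\ldots,\{t_{\ell-2},t_{\ell-1}\}$ (each legal since $r$ is exchanged only with $S_b$-tokens). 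Now $t_{\ell-1}$ carries $r\notin S_b$ while $t_\ell$ still carries $w\in S_b$, so the previous paragraph applies and interchanges $1$ and $2$; finally we ``bubble'' $r$ back down to $t_j$, restoring every token except for the interchanged $1$ and $2$. Together with \Cref{lem_u_v_equivalent} this completes the argument.

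The step I expect to be the crux is the siding maneuver together with its repair in the same-class case. The obstruction is structural: $1$ and $2$ can never be swapped directly, so interchanging their positions requires a detour around them, and any such detour through the unique hub $t_\ell$ threatens to permute the ``blocker'' tokens $w,z$; the key realization is that $t\ge 2$ always supplies a token in a third class that can be slid in to break such a coincidence and then slid back, so the potential collateral permutation can always be undone. The routine parts are the verification that the explicit $7$-swap word realizes $(1\;2)$ and the bookkeeping for the bubbling.
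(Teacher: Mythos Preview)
Your proposal is correct and follows essentially the same approach as the paper: normalize so that $1,2$ sit on $u_1,u_2$, ensure the tokens at $t_{\ell-1},t_\ell$ lie in different partition classes, and then perform the explicit seven-swap ``siding maneuver'' around the degree-$3$ vertex $t_\ell$; your seven-edge word coincides with the paper's figure step for step. The only cosmetic difference is that the paper arranges $t_{\ell-1},t_\ell$ to hold different-class tokens \emph{before} invoking \Cref{lem_u_v_equivalent} (and leaves both normalization steps as one-line assertions), whereas you invoke \Cref{lem_tree_move_anywhere} for the placement of $1,2$ and handle the same-class obstruction afterwards by an explicit bubble-in/bubble-out of a third-class token---a slightly more detailed but equivalent argument.
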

\begin{proof}
    For any $\sigma$, we can first perform friendly swaps so that $\{1, 2\}$ occupy $\{u_1, u_2\}$ (not necessarily in this order). We can also perform friendly swaps so that $u_{\ell - 1}$ and $u_{\ell}$ are occupied by $i$, $j$ of different partition classes. So we can assume without loss of generality (by \Cref{lem_u_v_equivalent}) that $\sigma^{-1}(1) = u_1$, $\sigma^{-1}(2) = u_2$, and $u_{\ell - 1}$ and $u_{\ell}$ are occupied by $i, j$ of different partition classes. Now it is easy to check that $1$ and $2$ are $(Y, K_{2, k_1, \ldots, k_t})$-exchangeable from $\sigma$ (see \Cref{fig_13}). 
    \begin{figure}[ht]
        \centering
        \begin{tikzpicture}[node distance = 2 cm, auto, scale = 0.9]
            \tikzstyle{spot} = [circle, draw, inner sep = 1pt]
            \tikzstyle{edge} = [-]
            \tikzstyle{namedperson} = [circle, draw, fill = red!50, inner sep = 1pt]
            \tikzstyle{specialspot} = [circle, draw, fill = yellow!50, inner sep = 1pt]
            
            \node[spot](1) at (0, 0) {\phantom{$2$}};
            \node[specialspot](2) at (2, 0) {\phantom{$2$}};
            \node[namedperson](3) at (3, 1) {$1$};
            \node[namedperson](4) at (3, -1) {$2$};

            \draw[edge] (1)--(2);
            \draw[edge] (2)--(3);
            \draw[edge] (2)--(4);

            \path (3) edge [bend right, dotted, very thick, ->, color = blue!50] (1);

            \node[namedperson](1) at (0+4, 0) {$1$};
            \node[spot](2) at (2+4, 0) {\phantom{$2$}};
            \node[specialspot](3) at (3+4, 1) {\phantom{$2$}};
            \node[namedperson](4) at (3+4, -1) {$2$};

            \draw[edge] (1)--(2);
            \draw[edge] (2)--(3);
            \draw[edge] (2)--(4);

            \path (4) edge [bend left, dotted, very thick, ->, color = blue!50] (3);

            \node[namedperson](1) at (0+8, 0) {$1$};
            \node[specialspot](2) at (2+8, 0) {\phantom{$2$}};
            \node[namedperson](3) at (3+8, 1) {$2$};
            \node[spot](4) at (3+8, -1) {\phantom{$2$}};

            \draw[edge] (1)--(2);
            \draw[edge] (2)--(3);
            \draw[edge] (2)--(4);

            \path (1) edge [bend right, dotted, very thick, ->, color = blue!50] (4);

            \node[specialspot](1) at (0+12, 0) {\phantom{$2$}};
            \node[spot](2) at (2+12, 0) {\phantom{$2$}};
            \node[namedperson](3) at (3+12, 1) {$2$};
            \node[namedperson](4) at (3+12, -1) {$1$};

            \draw[edge] (1)--(2);
            \draw[edge] (2)--(3);
            \draw[edge] (2)--(4);

            \path (1) edge [bend left, dotted, very thick, <->, color = blue!50] (2);
            
        \end{tikzpicture}
        \caption{The way we exchange $1$ and $2$ in the ``fork'' end of $Y$. It is crucial that we have two types of empty spots so we can swap them back in the last step.}
        \label{fig_12}
    \end{figure}
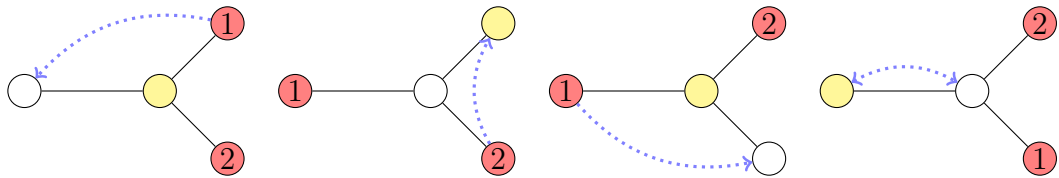
\end{proof}
\begin{proof}[Proof of \Cref{thm_tree_reduction}]
    This follows from \Cref{prop_snake_tongue}, \Cref{prop_y_exchangeable}, \Cref{lem_u_v_equivalent}, and \Cref{lem_localization}. 
\end{proof}

We are ready to put everything together.
\begin{proof}[Proof of \Cref{thm_main}]
    Case (1) is Wilson's theorem from \cite{wilson1974}. 
    
    Case (2) follows from case (1) of \Cref{thm_multi_connected} and \Cref{thm_multi_disconnected}.

    Case (3) follows from case (2) of \Cref{thm_multi_connected}, \Cref{thm_multi_disconnected}, and \Cref{thm_tree}.
    
    Case (4) follows from case (2) of \Cref{thm_multi_connected}, \Cref{thm_multi_disconnected}, \Cref{thm_tree}, and \Cref{lem_cycle_components}.
\end{proof}

\section{Exactly two components in $\FS(X, K_{k, n-k})$}\label{sec_two_comps}
In this section, we prove \Cref{thm_two_comps}. We need the following lemma of Alon, Defant, and Kravitz.
\begin{lemma}[\cite{alon2023}]\label{lem_two_components}
    For any $n \geq 5$ and $k, \ell \geq 2$, $\FS(K_{k, n - k}, K_{\ell, n - \ell})$ has exactly two connected components. Furthermore, the connected component containing the bijection $\sigma$ is determined by the parity of $\operatorname{sgn}(\sigma) + |\sigma(\{1, \ldots, k\}) \cap \{1, \ldots, \ell\}|$. 
\end{lemma}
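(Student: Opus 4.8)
I would begin by writing $X=K_{k,n-k}$ on vertex set $[n]$ with parts $\mathcal A=\{1,\dots,k\}$ and $\mathcal B=\{k+1,\dots,n\}$, and $Y=K_{\ell,n-\ell}$ with parts $\mathcal C=\{1,\dots,\ell\}$ and $\mathcal D=\{\ell+1,\dots,n\}$; I call the labels in $\mathcal C$ \emph{red} and those in $\mathcal D$ \emph{blue}, and for a bijection $\sigma$ I call $\sigma^{-1}(\mathcal C)$ its \emph{pattern} (the set of positions holding a red label). Define the $\{0,1\}$-valued quantity
\[
  \Phi(\sigma)\;=\;\operatorname{sgn}(\sigma)+\bigl|\sigma(\mathcal A)\cap\mathcal C\bigr|\pmod 2,
\]
where $|\sigma(\mathcal A)\cap\mathcal C|$ counts the red labels sitting on the $\mathcal A$-side. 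The first step is to check that $\Phi$ is constant on connected components of $\FS(X,Y)$: an $(X,Y)$-friendly swap across an edge $\{a,b\}$ with $a\in\mathcal A$, $b\in\mathcal B$ post-composes $\sigma$ with a transposition, so $\operatorname{sgn}$ flips; and since $\sigma(a),\sigma(b)$ must lie in different parts of $Y$, exactly one of the two exchanged labels is red, so the number of reds on the $\mathcal A$-side changes by exactly $\pm1$. Both summands flip, so $\Phi$ is preserved. Both values of $\Phi$ occur (compare the identity bijection with the one obtained by transposing the labels $1$ and $2$, which are both red since $\ell\geq 2$: this flips $\operatorname{sgn}$ but not the pattern, hence flips $\Phi$), so $\FS(X,Y)$ has at least two components. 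Throughout I assume $2\leq k,\ell\leq n-2$; if $k$ or $\ell$ equals $1$ or $n-1$ then $X$ or $Y$ is a star and this case is not what the lemma is about (it should be excluded, or handled directly).

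\textbf{Reduction to a normal form.} The substance is the converse: any two bijections with the same value of $\Phi$ lie in one component. I would prove this by moving an arbitrary $\sigma$ to a normal form in two stages. \emph{Stage A:} by friendly swaps, drive the pattern $\sigma^{-1}(\mathcal C)$ to a fixed target set $P^\ast$ with $|P^\ast|=\ell$; here one checks that every $\ell$-subset of $[n]$ is reachable as a pattern from every other, since a single friendly swap slides one red across the $\mathcal A$–$\mathcal B$ divide in exchange for a blue, and (because $2\leq k,\ell\leq n-2$ there is always spare room) a short sequence of such slides realizes any prescribed change of pattern. \emph{Stage B:} with the pattern now fixed at $P^\ast$, one determines exactly which permutations of $[n]$ preserving $P^\ast$ are realizable by friendly-swap sequences that return the pattern to $P^\ast$. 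Every such sequence has even length (each move changes $|\,\cdot\cap\mathcal A\,|$ by one, and the net change is $0$), hence induces an \emph{even} permutation; preserving the pattern, it lies in $S(\mathcal C)\times S(\mathcal D)$; so the realizable group is contained in the index-two subgroup $G=\{(\mu,\nu)\in S(\mathcal C)\times S(\mathcal D):\operatorname{sgn}\mu=\operatorname{sgn}\nu\}$. Granting (see below) that in fact \emph{all} of $G$ is realizable, the proof concludes cleanly: given $\sigma_1,\sigma_2$ with $\Phi(\sigma_1)=\Phi(\sigma_2)$, apply Stage A to each to reach $\widetilde\sigma_1,\widetilde\sigma_2$ with the common pattern $P^\ast$; then $\rho:=\widetilde\sigma_2\widetilde\sigma_1^{-1}$ fixes $\mathcal C$ setwise, and since the pattern is fixed, $\Phi=\operatorname{sgn}+|P^\ast\cap\mathcal A|$ forces $\operatorname{sgn}(\widetilde\sigma_1)\equiv\operatorname{sgn}(\widetilde\sigma_2)$, so $\rho\in G$; Stage B connects $\widetilde\sigma_1$ to $\rho\circ\widetilde\sigma_1=\widetilde\sigma_2$. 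Hence there are exactly two components, the one containing $\sigma$ being indexed by $\Phi(\sigma)$, which is the asserted parity.

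\textbf{The generating step (the crux).} To see $G$ is entirely realizable, I would exhibit an explicit four-swap cycle on a $K_{2,2}$ sub-board: if two $\mathcal A$-vertices carry red labels $p,p'$ and two $\mathcal B$-vertices carry blue labels $q,q'$, the sequence of swaps across the four edges of this $K_{2,2}$, taken in a suitable order, returns the pattern to its original state and realizes the double transposition $(p\,p')(q\,q')$. Conjugating such moves (to first reposition any desired pair of reds and pair of blues onto such a sub-board, then move everything back) shows all double transpositions of the form $(\text{red}\,\text{red})(\text{blue}\,\text{blue})$ are realizable while preserving the pattern, and a standard computation shows these generate $G$ (for instance $(p_1p_2)(qq')\cdot(p_2p_3)(qq')=(p_1\,p_2\,p_3)$ gives all $3$-cycles of reds, and symmetrically for blues). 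The main obstacle is precisely this step together with its bookkeeping: one must (i) verify the explicit $K_{2,2}$ cycle does what is claimed and that the $(\text{red}\,\text{red})(\text{blue}\,\text{blue})$ double transpositions really generate $G$ in all cases, paying attention to the small cases $k=2$ or $\ell=2$ where one part of $G$ is tiny; and (ii) confirm in Stages A and B that there is always enough unoccupied space of each colour to slide the relevant labels into the local configurations needed — this is where $n\geq5$ and $2\leq k,\ell\leq n-2$ are used. The sign and pattern arithmetic elsewhere is routine and should be recorded carefully but without delay.
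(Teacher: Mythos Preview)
The paper does not prove this lemma at all: it is quoted verbatim from Alon--Defant--Kravitz \cite{alon2023} and used as a black box in \Cref{sec_two_comps}. So there is no ``paper's own proof'' to compare against; what you have written is a self-contained argument for a cited result.

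That said, your outline is sound and would make a correct proof once the bookkeeping is carried out. The invariant $\Phi$ is exactly right, and the two-stage reduction (first normalize the pattern, then realize the pattern-preserving group $G$) is the natural strategy. The explicit $K_{2,2}$ four-swap cycle does produce a double transposition $(p\,p')(q\,q')$ as you claim, and your generation argument for $G$ from such elements is correct, including in the boundary cases $\ell=2$ or $n-\ell=2$ (where $n\ge 5$ keeps the other factor large enough). One point worth making explicit in your ``conjugation'' step: the reason the reversed repositioning sequence $S^{-1}$ remains legal after the $K_{2,2}$ cycle, and lands you at $(p\,p')(q\,q')\sigma_0$ rather than somewhere else, is that $(p\,p')(q\,q')\in S(\mathcal C)\times S(\mathcal D)\subseteq\operatorname{Aut}(Y)$ acts on $\FS(X,Y)$ by graph automorphisms, so applying the same edge-sequence to $\pi\sigma_1$ gives $\pi$ applied to the result of that sequence on $\sigma_1$. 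This is what justifies ``move everything back.'' The only other place requiring care is the claim that any prescribed labels $p,p',q,q'$ can be manoeuvred onto a $K_{2,2}$ sub-board; this is routine given $2\le k,\ell\le n-2$ and $n\ge 5$, but deserves a sentence since it is where the numerical hypotheses are actually used.
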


In this section, we assume that $X$ is a connected bipartite graph on $n \geq 5$ vertices that is not a cycle. By \Cref{lem_bipartite_disconnected}, $\FS(X, K_{k, n - k})$ has at least two connected components. On the other hand, if $X$ also does not contain a $k$-bridge and  $n \geq 2k$, then Case (3) of \Cref{thm_main} implies that $\FS(X, K_{1, k-1, n - k})$ is connected. 

Suppose that $K_{k, n - k}$ has vertex bipartition $\{1, \ldots, k\}$, $\{k + 1, \ldots, n\}$. Let $K'$ be the graph where we add the edge $\{1, 2\}$ to $K_{k, n - k}$. We first prove the ``only if'' direction of \Cref{thm_two_comps}.
\begin{proposition}\label{prop_more_than_two_comp}
    Suppose $n \geq 5$ and $n \geq 2k \geq 4$. Let $X$ be a connected bipartite graph on $n$ vertices that is not a cycle. The following hold:
    \begin{enumerate}
        \item If $X$ contains a $k$-bridge, then $\FS(X, K_{k, n - k})$ has more than two connected components. 
        \item If $X$ and $k$ are one of the three exceptions in \Cref{thm_two_comps}, then $\FS(X, K_{k, n - k})$ has exactly six connected components. 
    \end{enumerate}
\end{proposition}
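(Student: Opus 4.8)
The plan is a proof by contradiction. Since $X$ and $K_{k,n-k}$ are both bipartite, \Cref{lem_bipartite_disconnected} already gives that $\FS(X,K_{k,n-k})$ has at least two components, so suppose for contradiction that it has exactly two. Consider the complete tripartite graph $K_{1,k-1,n-k}$; the hypotheses $n\ge 5$ and $n\ge 2k\ge 4$ force $1\le k-1\le n-k$ with $n-k\ge 3$, so this is a genuine $3$-partite graph, and it is obtained from $K_{k,n-k}$ (with bipartition classes $\{1,\dots,k\}$ and $\{k+1,\dots,n\}$) by adding all edges between $\{1\}$ and $\{2,\dots,k\}$; thus $K_{k,n-k}$ is a spanning subgraph of $K_{1,k-1,n-k}$, and by \Cref{lem_spanning_subgraph} $\FS(X,K_{k,n-k})$ is a spanning subgraph of $\FS(X,K_{1,k-1,n-k})$. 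Because $X$ has a $k$-bridge, i.e.\ an $\bigl(n-(n-k)\bigr)$-bridge, and $n-k>2$, the second case of \Cref{thm_multi_disconnected} gives that $\FS(X,K_{1,k-1,n-k})$ is disconnected. Now a spanning subgraph with exactly two components sitting inside a disconnected ambient graph must have the same components as that ambient graph (each component of the larger graph is a union of components of the subgraph), so $\FS(X,K_{k,n-k})$ and $\FS(X,K_{1,k-1,n-k})$ would have identical component partitions. To reach a contradiction I would use the bipartite parity invariant: fixing a bipartition class $X_1$ of $X$, set $\epsilon(\sigma)=\operatorname{sgn}(\sigma)\cdot(-1)^{|\sigma(X_1)\cap\{1,\dots,k\}|}$; a one-line check shows every friendly swap flips both $\operatorname{sgn}(\sigma)$ and the parity of $|\sigma(X_1)\cap\{1,\dots,k\}|$, so $\epsilon$ is constant on components of $\FS(X,K_{k,n-k})$, and it clearly takes both values. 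Pick any edge $\{a,b\}$ of $X$ and any bijection $\sigma$ with $\sigma(a)=1$, $\sigma(b)=2$; since $\{1,2\}\in E(K_{1,k-1,n-k})$, the friendly swap across $\{a,b\}$ carries $\sigma$ to $(1\ 2)\circ\sigma$, and because $1,2\in\{1,\dots,k\}$ this transposition flips $\operatorname{sgn}$ while preserving $|\sigma(X_1)\cap\{1,\dots,k\}|$, hence flips $\epsilon$. So $\sigma$ and $(1\ 2)\circ\sigma$ lie in the same component of $\FS(X,K_{1,k-1,n-k})$ but in different components of $\FS(X,K_{k,n-k})$, contradicting the equality of the partitions. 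Therefore $\FS(X,K_{k,n-k})$ has more than two components.

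\textbf{Part (2).} I would first reduce to three concrete instances. Among the pairs $(X,k)$ permitted by the hypotheses with $X\in\{T_6,T_7,T_8\}$, the graph $X$ contains a $k$-bridge unless $(X,k)\in\{(T_6,3),(T_7,3),(T_8,4)\}$ — for smaller admissible $k$, either an edge incident to the degree-$3$ vertex (when $k=2$) or the $3$-$4$-$5$ path of $T_8$ (degrees $3,2,2$, when $k=3$) is a $k$-bridge — so by part (1) it remains only to prove that $\FS(T_6,K_{3,3})$, $\FS(T_7,K_{3,4})$, and $\FS(T_8,K_{4,4})$ each have exactly six components, and I would handle these one at a time. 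For the lower bound the plan is to exhibit an explicit invariant $\Phi$ with six values, namely the pair consisting of the parity $\epsilon$ of part (1) together with a second invariant taking three values that records how the tokens are arranged relative to the three legs of the spider and its branch vertex; checking that $\Phi$ is unchanged by a friendly swap is a finite check over the few edge-types, and surjectivity is witnessed by six explicit bijections.

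For the upper bound the plan is to show that from any bijection $\sigma$ one can reach, via friendly swaps, one of six designated ``canonical'' bijections, the target depending only on $\Phi(\sigma)$; two bijections with equal $\Phi$-value then lie in a common component, and combining with the lower bound gives exactly six. I expect this upper bound to be the main obstacle: $(T_6,3)$, $(T_7,3)$, and $(T_8,4)$ are precisely the borderline pairs at which the general machinery of the paper — the ``firemen'' and ``snake tongue'' arguments of Sections~\ref{sec_not_tree}--\ref{sec_tree}, together with \Cref{prop_exchange} applied with $\widetilde Y=K_{1,k-1,n-k}$ — fails to yield connectivity, so the bound cannot be inherited from the earlier theorems and must be obtained by a direct, case-heavy analysis of these small graphs. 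I would run the localization arguments as far as they reach to collapse most of the configuration space, then finish by hand, exploiting the graph automorphisms ($\operatorname{Aut}(T_7)\cong S_3$, $\operatorname{Aut}(T_6)\cong\operatorname{Aut}(T_8)\cong\mathbb{Z}/2$) and, when $k=n-k$ (the cases $T_6$ and $T_8$), the color-swapping symmetry of $K_{k,n-k}$, to keep the number of subcases manageable.
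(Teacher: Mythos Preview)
Your Part~(1) is correct and is essentially the paper's own argument, just packaged as a contradiction. The paper works with $K'=K_{k,n-k}\cup\{1,2\}$ rather than the full $K_{1,k-1,n-k}$, and it phrases the conclusion as ``$\FS(X,K_{k,n-k})$ has strictly more components than the disconnected $\FS(X,K')$'' rather than assuming two components and deriving a contradiction, but the substance---the bipartite parity invariant, the disconnection of the larger friends-and-strangers graph via the $k$-bridge, and the observation that the $(1\ 2)$-swap merges two components when passing to the larger $Y$---is identical. Your pigeonhole step (``two components in a spanning subgraph of a disconnected graph forces equal partitions'') is a clean way to organize it.

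For Part~(2) the paper takes a completely different and much shorter route: it simply declares the three instances $(T_6,K_{3,3})$, $(T_7,K_{3,4})$, $(T_8,K_{4,4})$ to be finite and verifies by brute-force computation that each has exactly six components. Your plan to build a six-valued invariant $\Phi=(\epsilon,\psi)$ with a three-valued $\psi$ recording arm data, and then to reduce by hand to six canonical bijections, is more ambitious and more illuminating if it works---but as written it is only a sketch. You have not actually produced $\psi$, and it is not at all clear a priori that a clean combinatorial three-valued invariant exists for each of the three trees (the extra factor of $3$ is not obviously tied to the three arms in any uniform way, and $T_6$ and $T_8$ lack the $S_3$ symmetry you hope to exploit). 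The upper bound you describe is likewise unexecuted. So there is a genuine gap: the paper's proof is complete by computer, while yours is a plausible but unfinished program. Given that the three friends-and-strangers graphs have only $720$, $5040$, and $40320$ vertices, the brute-force check is the pragmatic choice; if you want a human-readable proof, you would need to actually exhibit $\psi$ and carry out the case analysis.
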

\begin{proof}
    For the three exceptions (see \Cref{fig_exceptions}), we can check by brute force (e.g., writing a python program) that $\FS(X, K_{k, n - k})$ has exactly six connected components. 
    
    Now suppose $X$ contains a $k$-bridge and let $K'$ be as above. Then by \Cref{thm_main}, $\FS(X, K_{1, k-1, n - k})$ is disconnected. By \Cref{lem_spanning_subgraph}, this implies that $\FS(X, K')$ is disconnected. On the other hand, for any bijection $\sigma$ where $\{\sigma^{-1}(1), \sigma^{-1}(2)\} \in E(X)$, we can swap $1$ and $2$ to get $\sigma'$ such that $\operatorname{sgn}(\sigma) + |\sigma(\{1, \ldots, k\}) \cap \{1, \ldots, \ell\}|$ and $\operatorname{sgn}(\sigma') + |\sigma'(\{1, \ldots, k\}) \cap \{1, \ldots, \ell\}|$ have different parity. By \Cref{lem_two_components} and \Cref{lem_spanning_subgraph}, $\sigma$ and $\sigma'$ are in different connected components of $\FS(X, K_{k, n - k})$. Therefore, $\FS(X, K_{k, n - k})$ must have strictly more connected components than $\FS(X, K_{1, k-1, n - k})$. 
\end{proof}

Now we attack the ``if'' direction. 
\begin{theorem}\label{thm_K'}
    Suppose $n \geq 5$ and $n \geq 2k \geq 4$. Let $X$ be a connected bipartite graph on $n$ vertices that is not a cycle. Let $K'$ be the graph where we add the edge $\{1, 2\}$ to $K_{k, n - k}$. If $X$ is not one of the three exceptions and does not contain a $k$-bridge, then $\FS(X, K')$ is connected. 
\end{theorem}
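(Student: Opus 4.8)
The plan is to run the argument through \Cref{prop_exchange}. Note that $K_{k,n-k}\subsetneq K'\subseteq K_{1,k-1,n-k}$, where $K_{1,k-1,n-k}$ has partition classes $\{1\},\{2,\dots,k\},\{k+1,\dots,n\}$, and the only edges of $K_{1,k-1,n-k}$ missing from $K'$ are $\{1,j\}$ with $j\in\{3,\dots,k\}$. Since $n\ge 2k$ and $n\ge 5$ force $n-k\ge 3$, case (3) of \Cref{thm_main} applies and shows that $\FS(X,K_{1,k-1,n-k})$ is connected, using that $X$ is connected, is not a cycle, and has no $k$-bridge (here $n-k_t=n-(n-k)=k$). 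By \Cref{prop_exchange} it therefore suffices to prove: \emph{for every $j\in\{3,\dots,k\}$ and every bijection $\sigma:V(X)\to V(K')$ with $\{\sigma^{-1}(1),\sigma^{-1}(j)\}\in E(X)$, the vertices $1$ and $j$ are $(X,K')$-exchangeable from $\sigma$} (for edges already present in $K'$ the hypothesis of \Cref{prop_exchange} is automatic).

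To handle this I would use a \emph{transfer principle}: it is enough to prove the exchangeability after first moving $\sigma$ to a convenient configuration $\sigma'$ by a sequence $M$ of friendly swaps \emph{none of which interchanges two vertices of $\{1,\dots,k\}$}, i.e.\ $M$ consists of $(X,K_{k,n-k})$-friendly swaps. Indeed, suppose such an $M$ takes $\sigma$ to $\sigma'$ and that $1,j$ are $(X,K')$-exchangeable from $\sigma'$. Performing the same edge-moves as $M$, in the same order, starting from $(1\ j)\circ\sigma$ is still legal, because every move of $M$ swaps a vertex of $\{1,\dots,k\}$ with a vertex of $Q:=\{k+1,\dots,n\}$ and $j$ (like $1$) is $K'$-adjacent to all of $Q$; an easy induction shows this mirrored sequence carries $(1\ j)\circ\sigma$ to $(1\ j)\circ\sigma'$. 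Hence in $\FS(X,K')$ we obtain $\sigma\sim\sigma'\sim(1\ j)\circ\sigma'\sim(1\ j)\circ\sigma$, as needed. This is exactly where it matters that $M$ avoids the extra edge $\{1,2\}$; the edge $\{1,2\}$ is reserved for the final local exchange.

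Next, treat $\{1,\dots,k\}$ as indistinguishable \emph{people} (they behave identically under $(X,K_{k,n-k})$-swaps, each swapping only with vertices of $Q$) and $Q$ as \emph{empty spots}, with firemen $\{1,2,j\}$. If $X$ is not a tree, apply \Cref{prop_populate_cycle} with $S_a=\{1,\dots,k\}$ (so $n-k_t=k$ and the no-$k$-bridge hypothesis is precisely what that proposition needs) to herd $1,2,j$ --- together with at least two vertices of $Q$, and possibly some other people --- into a minimal cycle $C$ (which has $\ge 4$ vertices since $X$ is bipartite), then attach a pendant edge to obtain a stopwatch $W$. If $X$ is a tree, it is not a path (a path on $n$ vertices contains a $k$-bridge because $2\le k\le n-2$), hence has a vertex of degree $\ge 3$; using $n\ge 2k$ to upgrade ``no $k$-bridge'' to ``no $(n-k)$-bridge'' (truncate an $(n-k)$-bridge to its first $k$ vertices), one can, by \Cref{lem_tree_move_anywhere} and an argument parallel to \Cref{prop_snake_tongue}, move $1,2,j$ and at least two vertices of $Q$ into a snake-tongue $Y\subseteq X$; the steps in those arguments that shuffled empty spots of distinct classes past one another are unnecessary here, as $|Q|=n-k\ge k\ge 2$ already provides ample slack. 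In either case we arrive --- via $(X,K_{k,n-k})$-swaps only --- at a configuration $\sigma'$ and a witness subgraph $Z\in\{W,Y\}$ such that the subgraph of $K'$ induced on $\sigma'(V(Z))$ is $K_{p,q}+\{1,2\}$ with $p=|\sigma'(V(Z))\cap\{1,\dots,k\}|\ge 3$ and $q\ge 2$.

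It remains to show $1$ and $j$ are exchangeable in $\FS(Z,K_{p,q}+\{1,2\})$; by \Cref{lem_localization} and the transfer principle this finishes the proof. When $Z=W$, the proof of \Cref{thm_stopwatch} via \Cref{prop_alpha_beta} shows $\FS(W,K_{p,q})$ has at most two components, separated by whether the induced permutations on the $p$-part and the $q$-part have the same sign; from a configuration with the tokens $1$ and $2$ on adjacent cycle-vertices, the swap along $\{1,2\}$ is a transposition inside the $p$-part, so it flips this invariant and $\FS(W,K_{p,q}+\{1,2\})$ is connected. When $Z=Y$, one mimics \Cref{prop_y_exchangeable}: maneuver $1$ and $j$ onto the two prongs of the fork with the vertex $2$ and a $Q$-token on the adjacent vertices, then run the tripod rotation --- here $2$ and a $Q$-token play the roles of the two differently colored empty tokens in the figure accompanying \Cref{prop_y_exchangeable}, the asymmetry that $2$ is $K'$-adjacent to $1$ but not to $j$ being exactly what the two colors encode. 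The main obstacle is this tree case: the snake-tongue must be large enough (equivalently, $X$ must not be too small) for the rotation to go through, and one must verify that the only bipartite, non-path, non-cycle trees with no $k$-bridge for which it fails are $T_6,T_7,T_8$ of \Cref{fig_exceptions} --- which, together with the $k$-bridge graphs treated in \Cref{prop_more_than_two_comp}, accounts for exactly the stated exceptions. Carrying out that case analysis (and confirming by direct computation that each $T_i$ gives six components) is where the real work lies.
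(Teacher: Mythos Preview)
Your reduction via \Cref{prop_exchange} and the ``transfer principle'' (restricting the preliminary moves to $(X,K_{k,n-k})$-swaps so that the analogue of \Cref{lem_u_v_equivalent} applies to the pair $1,j$) is correct and matches the paper. The non-tree case is also essentially the paper's argument: populate a minimal even cycle with $1,2,j$ and at least one $Q$-token via \Cref{prop_populate_cycle}, form the stopwatch $W$, note that $|V(W)|$ is odd so $\FS(W,K_{p,q})$ has exactly two components separated by a sign invariant (from \Cref{prop_alpha_beta}), and observe that the extra swap across $\{1,2\}$ is a transposition inside the $p$-part and therefore connects them.

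The gap is in the tree case. Your plan is to place $1$ and $j$ on the two prongs of a snake-tongue and run the tripod rotation of \Cref{prop_y_exchangeable} with ``$2$ and a $Q$-token playing the roles of the two differently colored empty tokens.'' But the two colors in \Cref{prop_y_exchangeable} do \emph{not} encode an asymmetry in adjacency to the people being exchanged: both colored tokens there are adjacent to both people, and the distinct colors are needed only so the two empties can swap with each other in the final clean-up step. In your situation, $2$ is not $K'$-adjacent to $j$, so the moment the rotation asks $j$ to pass through the vertex occupied by $2$, it stalls. Concretely, on the four-vertex fork $\{t_{\ell-1},t_\ell,u_1,u_2\}$ with tokens $1,2,j,q$, the token $j$ can only ever swap with $q$; a short case check shows $(1\ j)$ is unreachable, and enlarging the snake-tongue by adding more $Q$-tokens does not by itself resolve this, since whenever $2$ sits at the fork vertex $t_\ell$ it still blocks $j$.

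This is precisely why the paper's proof of the tree case (via \Cref{thm_two_comps_reduction}) is not a snake-tongue argument at all. It splits according to the degree structure of the tree: a vertex of degree $\ge 4$ (reducing to a small star-plus-edge and a brute-force check), two vertices of degree $3$ (a ``dog bone'' subgraph with an explicit swap sequence), or a single vertex of degree $3$ (three long legs, reducing to a $T_8$-shaped subgraph and a brute-force check on $\FS(T_8,K_{4,4}+\{1,2\})$). The exceptions $T_6,T_7,T_8$ emerge exactly because in the single-degree-$3$ case the legs can be too short to accommodate this last reduction. Your sketch correctly anticipates that ``the real work lies'' in the tree case, but the mechanism you propose for it does not work; the structural case analysis is genuinely needed.
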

With this, we can easily prove \Cref{thm_two_comps}.
\begin{proof}[Proof of \Cref{thm_two_comps} assuming \Cref{thm_K'}]
    The ``only if'' direction follows from \Cref{prop_more_than_two_comp}. 
    
    For the ``if'' direction, since $K_{k, n - k}$ is only missing the edge $\{1, 2\}$ from $K'$ and they have the same neighbors in $K_{k, n - k}$, we can reach any bijection up to a permutation of $(1 \; 2)$. Indeed, suppose some sequence of friendly swaps takes $\sigma$ to $\sigma'$ in $\FS(X, K')$, then we can perform the same sequence of friendly swaps over edges of $X$ but skipping those where $1$ and $2$ are swapped. Then we will end up with some $\sigma''$ that is in the same connected component as $\sigma$ in $\FS(X, K_{k, n-k})$ and is either equal to $\sigma'$ or $(1 \; 2) \circ \sigma'$. Thus, $\FS(X, K_{k, n-k})$ has at most two connected components. Since it also cannot be connected, we are done. 
\end{proof}
If $k = 2$, we are already done since $K'$ is precisely $K_{1, 1, n - 2}$. Now we assume $k \geq 3$. 

As in \Cref{sec_not_tree} and \Cref{sec_tree}, we can use \Cref{prop_exchange} to reduce \Cref{thm_K'} to the following.
\begin{theorem}\label{thm_two_comps_reduction}
    Let $X$ and $K'$ be as above, and suppose that $X$ satisfies the assumptions in \Cref{thm_K'}. For any $j \in \{3, \ldots, k\}$, if $\sigma: V(X) \to V(K')$ satisfies $\{\sigma^{-1}(1), \sigma^{-1}(j)\} \in E(X)$, then the vertices $1$ and $j$ are $(X, K')$-exchangeable from $\sigma$. 
\end{theorem}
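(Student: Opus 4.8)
The proof of \Cref{thm_two_comps_reduction} will follow the localization philosophy of \Cref{sec_not_tree} and \Cref{sec_tree}, with the single extra edge $\{1,2\}$ of $K'$ playing a decisive role. It is worth recording why that edge is needed. Two-colour $V(X)$ and let $\beta(\sigma)$ be the number of vertices in one colour class carrying a label from $\{1,\dots,k\}$; since every friendly swap is a transposition, one checks that $(-1)^{\beta(\sigma)}\operatorname{sgn}(\sigma)$ is preserved by every $(X,K_{k,n-k})$-friendly swap (each such swap exchanges a big-class label with a non-big-class label, changing $\beta$ by $\pm1$ and $\operatorname{sgn}$ by $-1$) but is reversed by the transposition $(1\ j)$; hence $1$ and $j$ are \emph{not} $(X,K_{k,n-k})$-exchangeable from any $\sigma$. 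A swap across $\{1,2\}$ also reverses this quantity, which is exactly what makes the exchange possible in $K'$. So the plan is to use token $2$ as a helper, exploiting that in $K'$ the vertices $1$ and $2$ are adjacent and have identical neighbourhoods otherwise.

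Since $\FS(P_n,Y)$ is connected only when $Y$ is complete (\Cref{lem_path}) and $K'$ is not complete for $k\ge3$, the graph $X$ is not a path; so $X$ either contains a (necessarily even) cycle or is a tree with a vertex of degree $\ge3$, and I would split into these cases as in \Cref{sec_not_tree} and \Cref{sec_tree}. All preliminary maneuvering takes place inside $\FS(X,K_{k,n-k})\subseteq\FS(X,K')$, where $\{1,\dots,k\}$ is a genuine partition class, so \Cref{prop_populate_cycle} (non-tree case) and \Cref{lem_tree_move_anywhere} together with the scaffolding of \Cref{prop_snake_tongue} and \Cref{prop_y_exchangeable} (tree case) apply verbatim; the hypothesis that $X$ has no $k$-bridge guarantees, exactly as in those proofs, that empties can always be routed through the relevant forks. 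Using this I would bring $1$, $j$, the helper $2$, and a bounded number of empties (elements of $\{k+1,\dots,n\}$) onto a small subgraph $Z\subseteq X$: a stopwatch-type graph in the non-tree case, a snake tongue in the tree case. On the labels carried by $Z$ the induced subgraph of $K'$ is not complete multipartite, but it is a mild perturbation of one: since $1\not\sim j$ and both are adjacent to every empty, $1$ and $j$ act as two distinct singleton classes, with $2$ forming a further singleton adjacent to $1$ and to every empty.

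In this local graph I would realize the transposition $(1\ j)$ explicitly, using $2$ and a couple of empties to shuttle $1$ past $j$ through the fork of $Z$, in the spirit of the exchange in \Cref{prop_y_exchangeable}; the new difficulty is that the two ``spare'' cells now lie in the same partition class of $K_{k,n-k}$, so a residual transposition among $\{2\}\cup(\text{empties})$ may be left over, but since $2$ is adjacent to every empty this residue can be cleared afterwards. Having produced a sequence of $(X,K')$-friendly swaps realizing $(1\ j)$ from a convenient configuration $\sigma'$, I would transfer the conclusion to the given $\sigma$: since $\sigma$ and $\sigma'$ lie in one component of $\FS(X,K_{k,n-k})$ and $1,j$ have equal neighbourhoods there, relabelling the connecting sequence by $(1\ j)$ — precisely the argument in the proof of \Cref{lem_u_v_equivalent} — produces a valid sequence from $(1\ j)\sigma'$ to $(1\ j)\sigma$, and concatenation finishes the proof.

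The main obstacle is the interplay of two competing constraints on the size of $Z$: it must host $1$, $j$, $2$ and enough empties for the local exchange-with-cleanup, yet be small enough to fit inside $X$ and to leave the remaining cells arrangeable. For all but finitely many $X$ a suitable $Z$ can be found; the failure cases are exactly the small trees $T_6,T_7,T_8$, in which every vertex of degree $\ge3$ is too close to the leaves to support the required snake tongue (this is why they surface in \Cref{thm_two_comps}), and these, together with any remaining sporadically small $X$, are dispatched by the finite check already invoked in \Cref{prop_more_than_two_comp}. Pinning down the bookkeeping of the local step — and checking that the residual-permutation cleanup always succeeds — is where essentially all of the work lies.
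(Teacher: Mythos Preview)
Your non-tree case is essentially the paper's Case~I: populate a minimal even cycle with $1,2,j$ and at least one empty, attach an extra empty to form a stopwatch on an odd number of vertices, and use \Cref{prop_alpha_beta} to reach the ``same-sign'' half of the permutations, with the $\{1,2\}$ swap supplying the missing parity. The transfer back to $\sigma$ via the argument of \Cref{lem_u_v_equivalent}, applied inside $\FS(X,K_{k,n-k})$ where $1$ and $j$ do have equal neighbourhoods, is also handled correctly.

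The tree case, however, has a real gap. You invoke \Cref{prop_snake_tongue} and \Cref{prop_y_exchangeable} ``verbatim'', but both require the empties to come from at least two distinct partition classes; here every empty lies in the single class $\{k+1,\dots,n\}$, and no two empties are adjacent. Your fix is to let the helper $2$ play the role of a second empty type, but this fails for a simple reason: in $K'$ the vertex $2$ is \emph{not} adjacent to $j$ (the edge $\{1,2\}$ was added, not $\{2,j\}$). So from $j$'s point of view $2$ behaves like another person, not like an empty, and $j$ cannot be shuttled past $2$ through a fork. Your residual-cleanup argument has the same problem: if the leftover transposition is $(e_1\ e_2)$ between two genuine empties, ``$2$ is adjacent to every empty'' does not help, because fixing $(e_1\ e_2)$ inside a single snake tongue is exactly the parity-obstructed problem you started with.

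This is why the paper's tree argument is much more elaborate than a single snake tongue. It splits according to the branching of $X$: a vertex of degree $\ge 4$ gives a $4$-way fork on which a six-vertex computation (Figure~\ref{fig_14}) suffices; two degree-$3$ vertices yield a ``dog bone'' with forks at both ends, and the exchange of $1$ and $j$ is performed by an explicit sequence that uses \emph{both} forks (Figure~\ref{fig_20}); and when $X$ has a unique degree-$3$ vertex one must first manufacture an eight-vertex subgraph carrying $1,2,j$, a fourth person, and four empties, and then appeal to a brute-force check that $\FS(T,K_{4,4}+\{1,2\})$ is connected. The exceptional trees $T_6,T_7,T_8$ are not merely ``too small to host a snake tongue'' --- they are genuine failures of the theorem, with six components --- and they only emerge from this finer case analysis. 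In short, the single-fork local model is not rich enough once $2\not\sim j$; you need a second source of branching, and supplying it is where essentially all of the work in the tree case lies.
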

\begin{proof}[Proof of \Cref{thm_two_comps_reduction}]
    By symmetry, it suffices to consider $j = 3$. Since the case $k = 2$ holds by the connectedness of $\FS(X, K_{1, k-1, n - k})$, we start with $k > 2$. We treat elements of $\{1, \ldots, k\}$ as people and others as empty spots. 

    \textbf{Case I: $X$ is not a tree.} In this case, we can find a minimal even cycle $C \subset X$ and $|V(C)| < n$. By \Cref{prop_populate_cycle}, there is a bijection $\sigma'$ which can be obtained from $\sigma$ using a sequence of friendly swaps that does not include the swap across $\{1, 2\}$ such that $\sigma'^{-1}(1), \sigma'^{-1}(2), \sigma'^{-1}(3) \in V(C)$ and $|V(C) \cap \sigma'^{-1}(\{k + 1, \ldots, n\})| \geq 1$. If there is only one empty spot in $C$, then we can bring another empty spot to a vertex adjacent to $C$.
    
    By \Cref{thm_stopwatch}, we can reach half of the orderings of people and empty spots in $C$, but since we are also allowed to swap $1$ and $2$, we would be able to get all the possible ordering; in particular, we can exchange $1$ and $3$.

    Since we did not swap $1$ and $2$ when going from $\sigma$ to $\sigma'$, \Cref{lem_u_v_equivalent} still applies and thus $1$ and $3$ are $(X, K')$-exchangeable from $\sigma$. 
    
    \textbf{Case II: $X$ is a tree and contains a vertex of degree at least $4$.} Let $u$ be a vertex of degree at least $4$ and let $v_1, \ldots, v_m$ where $m \geq 4$ be its neighbors. For a neighbor $v_i$ of $u$, we say that the \blue{\emph{branch}} of $v_i$ is the component of $X$ containing $v_i$ when the edge $\{u, v_i\}$ is removed. 
    
    By \Cref{lem_tree_move_anywhere}, we can perform a sequence of friendly swaps to move $1$ and $2$ to occupy $u$ and $v_1$ (not necessarily in this order) without using the swap across $\{1, 2\}$. Without loss of generality suppose $1$ is on $u$. Now we can bring all empty spots so that they are as close to $1$ as possible without moving $1$ or $2$. It is not hard to check that it is possible to move the empty spots around so that $v_2$, $v_3$, and $v_4$ are all occupied by empty spots. Now suppose $x_1 = u, x_2, \ldots, x_\ell$ is the path from $u$ to where $3$ is. If $\ell = 2$, this means $x_2 = v_5$ is another neighbor of $u$ (see \Cref{fig_13}). By Wilson's theorem (case (1) of \Cref{thm_main}) and \Cref{lem_localization}, $1$ and $3$ are exchangeable.

    \begin{figure}[ht]
            \centering
            \begin{tikzpicture}[node distance = 2 cm, auto, scale = 0.9]
            \tikzstyle{spot} = [circle, draw, inner sep = 0pt]
            \tikzstyle{edge} = [-]
            \tikzstyle{person} = [circle, draw, fill = red!50, inner sep = 0pt]
    
            \node[spot](a) at (0, 0) {\phantom{$f_1$}};
            \node[spot](b) at (1, 1) {\phantom{$f_1$}};
            \node[spot](c) at (1, -1) {\phantom{$f_1$}};
            \node[person](d) at (1, 0) {\phantom{$f_1$}};
            \node[person](e) at (2, 0.7) {\phantom{$f_1$}};
            \node[person](f) at (2, -0.7) {\phantom{$f_1$}};
    
            \node () at (d) {$1$};
            \node () at (e) {$2$};
            \node () at (f) {$3$};
    
            \draw[edge] (a)--(d)--(e);
            \draw[edge] (b)--(d)--(c);
            \draw[edge] (d)--(f);
            
            \end{tikzpicture}
            \caption{By Wilson's theorem, $1$ and $3$ are exchangeable from this configuration.}
            \label{fig_13}
    \end{figure}
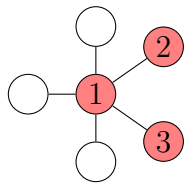

    Now suppose $\ell > 2$. If $x_2 \neq v_1$, we can just move the person at $u$ into $x_2$ and the person at $v_1$ into $u$. Thus, we may assume that $x_2 = v_1$, $1$ is at $x_1 = u$, and $2$ is at $x_2 = v_1$. Our goal is to bring $3$ in so that the vertices $\{u, v_1, v_2, v_3, v_4, x_2\}$ is occupied by $1, 2, 3$ and three empty spots. By brute force, we can show that the friends and strangers graph on the induced subgraph of $X$ on $\{u, v_1, v_2, v_3, v_4, x_2\}$ and the induced subgraph of $K'$ on $\{1, 2, 3, k+1, k+2, k+3\}$ is connected (see \Cref{fig_14}). Then by \Cref{lem_localization}, $1$ and $3$ are exchangeable from $\sigma$. 

    \begin{figure}[ht]
            \centering
            \begin{tikzpicture}[node distance = 2 cm, auto, scale = 0.9]
            \tikzstyle{spot} = [circle, draw, inner sep = 0pt]
            \tikzstyle{edge} = [-]
            \tikzstyle{person} = [circle, draw, fill = red!50, inner sep = 0pt]
    
            \node[spot](a) at (0, 0) {\phantom{$f_1$}};
            \node[spot](b) at (1, 1) {\phantom{$f_1$}};
            \node[spot](c) at (1, -1) {\phantom{$f_1$}};
            \node[person](d) at (1, 0) {\phantom{$f_1$}};
            \node[person](e) at (2, 0) {\phantom{$f_1$}};
            \node[person](f) at (3, 0) {\phantom{$f_1$}};
    
            \node () at (d) {$1$};
            \node () at (e) {$2$};
            \node () at (f) {$3$};
            \node () at (b) {$4$};
            \node () at (a) {$5$};
            \node () at (c) {$6$};
    
            \draw[edge] (a)--(d)--(e)--(f);
            \draw[edge] (b)--(d)--(c);
            
            \end{tikzpicture}
            \caption{We can show by brute force that any configuration can be reached within this subgraph. In particular, we can always exchange $1$ and $3$,}
            \label{fig_14}
    \end{figure}
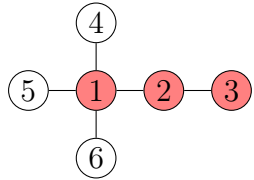
    
    If $v_2, v_3, v_4$ are not the only vertices with empty spots not on the branch of $v_1$, then we can move $1$ and $2$ out of the way and evacuate the person at $x_3$ to fill up that extra empty spot (see \Cref{fig_15}). Moving $1$ and $2$ back, we have brought $3$ one step closer.
        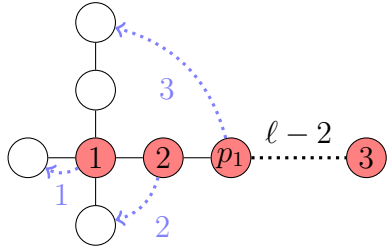
\begin{figure}[ht]
            \centering
            \begin{tikzpicture}[node distance = 2 cm, auto, scale = 0.9]
            \tikzstyle{spot} = [circle, draw, inner sep = 0pt]
            \tikzstyle{edge} = [-]
            \tikzstyle{person} = [circle, draw, fill = red!50, inner sep = 0pt]
    
            \node[spot](a) at (0, 0) {\phantom{$f_1$}};
            \node[spot](b) at (1, 1) {\phantom{$f_1$}};
            \node[spot](c) at (1, -1) {\phantom{$f_1$}};
            \node[person](d) at (1, 0) {\phantom{$f_1$}};
            \node[person](e) at (2, 0) {\phantom{$f_1$}};
            \node[person](f) at (3, 0) {\phantom{$f_1$}};
            \node[person](g) at (5, 0) {\phantom{$f_1$}};
            \node[spot](h) at (1, 2) {\phantom{$f_1$}};
    
            \node () at (d) {$1$};
            \node () at (e) {$2$};
            \node () at (f) {$p_1$};
            \node () at (g) {$3$};
    
            \path (f) edge [dotted, very thick] node{$\ell -2$} (g);
            \draw[edge] (a)--(d)--(e)--(f);
            \draw[edge] (h)--(b)--(d)--(c);
            
            \path (d) edge [bend left, dotted, very thick, ->, color = blue!50] node{$1$} (a);
            \path (e) edge [bend left, dotted, very thick, ->, color = blue!50] node{$2$} (c);
            \path (f) edge [bend right, dotted, very thick, ->, color = blue!50] node{$3$} (h);
            \end{tikzpicture}
            \caption{If there are more than three empty spots outside the branch containing $3$, we can move $1$ and $2$ out of the way to evacuate $p_1$ from the path. It is also possible that this extra empty spot is another neighbor of $u$. The same procedure applies.}
            \label{fig_15}
        \end{figure}
        
    Now we may assume that all but three empty spots are in the branch of $v_1$. We move $1$ into $v_3$. If we cut off the branch of $v_3$, we get a tree that still does not contain a $k$-bridge. In particular, it does not contain an $(n - k)$-bridge. By \Cref{lem_tree_move_anywhere}, we can bring $2$ and $3$ to occupy $u$ and $v_1$. It is now not difficult to show that we can move things around to have empty spots in $v_2, v_4, x_2$. 
    
    \textbf{Case III: $X$ is a tree and contains two vertices of degree $3$.} We can always find a vertex of degree $3$ so that all other vertices of degree $3$ lie on one branch from this vertex. 
    
    Suppose $x_1, \ldots, x_\ell$ is a branch that is a path from a leaf $x_1$ to this vertex $x_\ell$. Suppose $y_1 = x_\ell, \ldots, y_m$ is the path to another vertex of degree $3$ closest to $x_\ell$. Let $u_1 \neq y_2$ be the other neighbor of $x_\ell$ and let $u_2, u_3 \neq y_{m - 1}$ be the other two neighbors of $y_m$. Let $D$ be the induced subgraph of $X$ with $V(D) = \{y_1, \ldots, y_m, x_{\ell -1}, u_1, u_2, u_3\}$. We claim that if $1$ is at $x_{\ell -1}$, $2$ is at $y_1$, $3$ is at $u_1$, and the rest of the vertices of $D$ are occupied by empty spots, then we can exchange $1$ and $3$ within this ``dog bone'' subgraph (see \Cref{fig_16}). 

    \begin{figure}[ht]
            \centering
            \begin{tikzpicture}[node distance = 2 cm, auto, scale = 0.9]
            \tikzstyle{spot} = [circle, draw, inner sep = 0pt]
            \tikzstyle{edge} = [-]
            \tikzstyle{person} = [circle, draw, fill = red!50, inner sep = 0pt]
    
            \node[person](a) at (0, 1) {\phantom{$f_1$}};
            \node[person](b) at (0, -1) {\phantom{$f_1$}};
            \node[person](c) at (1, 0) {\phantom{$f_1$}};
            \node[spot](g) at (2, 0) {\phantom{$f_1$}};
            \node[spot](d) at (4, 0) {\phantom{$f_1$}};
            \node[spot](e) at (5, 1) {\phantom{$f_1$}};
            \node[spot](f) at (5, -1) {\phantom{$f_1$}};
    
            \node () at (b) {$1$};
            \node () at (c) {$2$};
            \node () at (a) {$3$};
    
            \draw[edge] (a)--(c)--(b);
            \draw[edge] (c)--(g);
            \draw[edge] (e)--(d)--(f);
            \path (g) edge [dotted, very thick] node{$m-1$} (d);
            \end{tikzpicture}
            \caption{We claim that $1$ and $3$ are exchangeable from this configuration.}
            \label{fig_16}
    \end{figure}
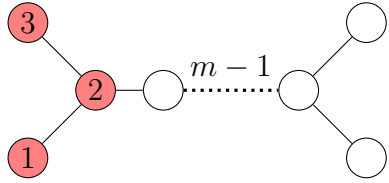

    We proof the claim explicitly by giving a sequence of friendly swaps (see \Cref{fig_20}). 

    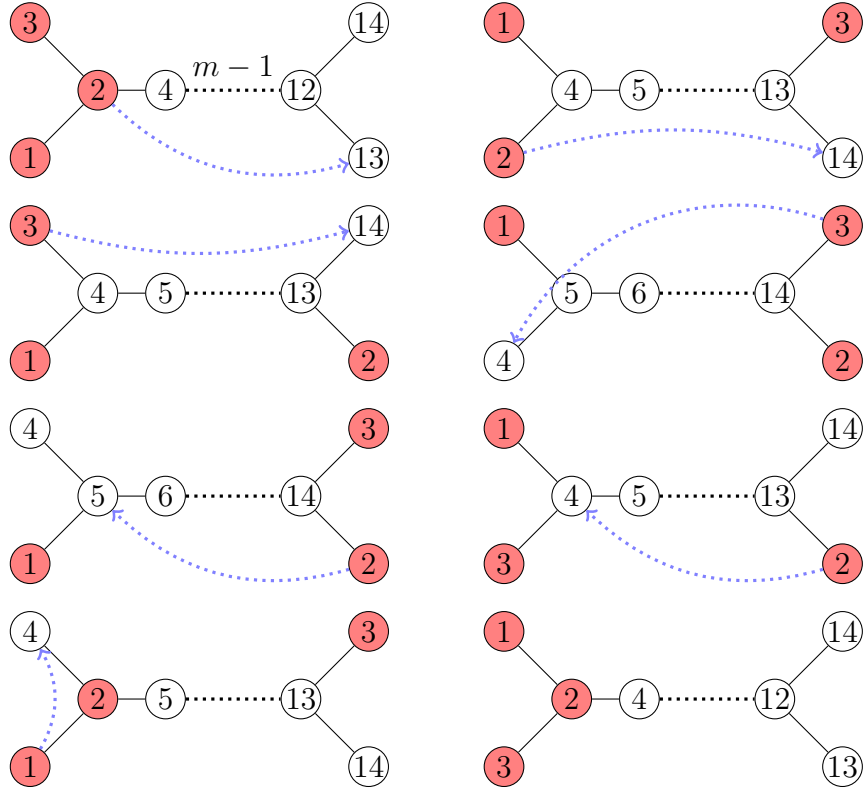
\begin{figure}[ht]
            \centering
            \begin{tikzpicture}[node distance = 2 cm, auto, scale = 0.9]
            \tikzstyle{spot} = [circle, draw, inner sep = 0pt]
            \tikzstyle{edge} = [-]
            \tikzstyle{person} = [circle, draw, fill = red!50, inner sep = 0pt]
    
            \node[person](a) at (0, 1) {\phantom{$f_1$}};
            \node[person](b) at (0, -1) {\phantom{$f_1$}};
            \node[person](c) at (1, 0) {\phantom{$f_1$}};
            \node[spot](g) at (2, 0) {\phantom{$f_1$}};
            \node[spot](d) at (4, 0) {\phantom{$f_1$}};
            \node[spot](e) at (5, 1) {\phantom{$f_1$}};
            \node[spot](f) at (5, -1) {\phantom{$f_1$}};
    
            \node () at (b) {$1$};
            \node () at (c) {$2$};
            \node () at (a) {$3$};
            \node () at (g) {$4$};
            \node () at (d) {$12$};
            \node () at (f) {$13$};
            \node () at (e) {$14$};
    
            \draw[edge] (a)--(c)--(b);
            \draw[edge] (c)--(g);
            \draw[edge] (e)--(d)--(f);
            \path (g) edge [dotted, very thick] node{$m-1$} (d);
            \path (c) edge [bend right, dotted, very thick, ->, color = blue!50] (f);

            \node[person](a) at (0, 1-3) {\phantom{$f_1$}};
            \node[person](b) at (0, -1-3) {\phantom{$f_1$}};
            \node[spot](c) at (1, 0-3) {\phantom{$f_1$}};
            \node[spot](g) at (2, 0-3) {\phantom{$f_1$}};
            \node[spot](d) at (4, 0-3) {\phantom{$f_1$}};
            \node[spot](e) at (5, 1-3) {\phantom{$f_1$}};
            \node[person](f) at (5, -1-3) {\phantom{$f_1$}};
    
            \node () at (b) {$1$};
            \node () at (c) {$4$};
            \node () at (a) {$3$};
            \node () at (g) {$5$};
            \node () at (d) {$13$};
            \node () at (f) {$2$};
            \node () at (e) {$14$};
    
            \draw[edge] (a)--(c)--(b);
            \draw[edge] (c)--(g);
            \draw[edge] (e)--(d)--(f);
            \path (g) edge [dotted, very thick] (d);
            \path (a) edge [bend right = 15, dotted, very thick, ->, color = blue!50] (e);

            \node[spot](a) at (0, 1-6) {\phantom{$f_1$}};
            \node[person](b) at (0, -1-6) {\phantom{$f_1$}};
            \node[spot](c) at (1, 0-6) {\phantom{$f_1$}};
            \node[spot](g) at (2, 0-6) {\phantom{$f_1$}};
            \node[spot](d) at (4, 0-6) {\phantom{$f_1$}};
            \node[person](e) at (5, 1-6) {\phantom{$f_1$}};
            \node[person](f) at (5, -1-6) {\phantom{$f_1$}};
    
            \node () at (b) {$1$};
            \node () at (c) {$5$};
            \node () at (a) {$4$};
            \node () at (g) {$6$};
            \node () at (d) {$14$};
            \node () at (f) {$2$};
            \node () at (e) {$3$};
    
            \draw[edge] (a)--(c)--(b);
            \draw[edge] (c)--(g);
            \draw[edge] (e)--(d)--(f);
            \path (g) edge [dotted, very thick] (d);
            \path (f) edge [bend left, dotted, very thick, ->, color = blue!50] (c);

            \node[spot](a) at (0, 1-9) {\phantom{$f_1$}};
            \node[person](b) at (0, -1-9) {\phantom{$f_1$}};
            \node[person](c) at (1, 0-9) {\phantom{$f_1$}};
            \node[spot](g) at (2, 0-9) {\phantom{$f_1$}};
            \node[spot](d) at (4, 0-9) {\phantom{$f_1$}};
            \node[person](e) at (5, 1-9) {\phantom{$f_1$}};
            \node[spot](f) at (5, -1-9) {\phantom{$f_1$}};
    
            \node () at (b) {$1$};
            \node () at (c) {$2$};
            \node () at (a) {$4$};
            \node () at (g) {$5$};
            \node () at (d) {$13$};
            \node () at (f) {$14$};
            \node () at (e) {$3$};
    
            \draw[edge] (a)--(c)--(b);
            \draw[edge] (c)--(g);
            \draw[edge] (e)--(d)--(f);
            \path (g) edge [dotted, very thick] (d);
            \path (b) edge [bend right, dotted, very thick, ->, color = blue!50] (a);

            \node[person](a) at (0+7, 1) {\phantom{$f_1$}};
            \node[person](b) at (0+7, -1) {\phantom{$f_1$}};
            \node[spot](c) at (1+7, 0) {\phantom{$f_1$}};
            \node[spot](g) at (2+7, 0) {\phantom{$f_1$}};
            \node[spot](d) at (4+7, 0) {\phantom{$f_1$}};
            \node[person](e) at (5+7, 1) {\phantom{$f_1$}};
            \node[spot](f) at (5+7, -1) {\phantom{$f_1$}};
    
            \node () at (b) {$2$};
            \node () at (c) {$4$};
            \node () at (a) {$1$};
            \node () at (g) {$5$};
            \node () at (d) {$13$};
            \node () at (f) {$14$};
            \node () at (e) {$3$};
    
            \draw[edge] (a)--(c)--(b);
            \draw[edge] (c)--(g);
            \draw[edge] (e)--(d)--(f);
            \path (g) edge [dotted, very thick] (d);
            \path (b) edge [bend left = 15, dotted, very thick, ->, color = blue!50] (f);

            \node[person](a) at (0+7, 1-3) {\phantom{$f_1$}};
            \node[spot](b) at (0+7, -1-3) {\phantom{$f_1$}};
            \node[spot](c) at (1+7, 0-3) {\phantom{$f_1$}};
            \node[spot](g) at (2+7, 0-3) {\phantom{$f_1$}};
            \node[spot](d) at (4+7, 0-3) {\phantom{$f_1$}};
            \node[person](e) at (5+7, 1-3) {\phantom{$f_1$}};
            \node[person](f) at (5+7, -1-3) {\phantom{$f_1$}};
    
            \node () at (b) {$4$};
            \node () at (c) {$5$};
            \node () at (a) {$1$};
            \node () at (g) {$6$};
            \node () at (d) {$14$};
            \node () at (f) {$2$};
            \node () at (e) {$3$};
    
            \draw[edge] (a)--(c)--(b);
            \draw[edge] (c)--(g);
            \draw[edge] (e)--(d)--(f);
            \path (g) edge [dotted, very thick] (d);
            \path (e) edge [bend right = 40, dotted, very thick, ->, color = blue!50] (b);

            \node[person](a) at (0+7, 1-6) {\phantom{$f_1$}};
            \node[person](b) at (0+7, -1-6) {\phantom{$f_1$}};
            \node[spot](c) at (1+7, 0-6) {\phantom{$f_1$}};
            \node[spot](g) at (2+7, 0-6) {\phantom{$f_1$}};
            \node[spot](d) at (4+7, 0-6) {\phantom{$f_1$}};
            \node[spot](e) at (5+7, 1-6) {\phantom{$f_1$}};
            \node[person](f) at (5+7, -1-6) {\phantom{$f_1$}};
    
            \node () at (b) {$3$};
            \node () at (c) {$4$};
            \node () at (a) {$1$};
            \node () at (g) {$5$};
            \node () at (d) {$13$};
            \node () at (f) {$2$};
            \node () at (e) {$14$};
    
            \draw[edge] (a)--(c)--(b);
            \draw[edge] (c)--(g);
            \draw[edge] (e)--(d)--(f);
            \path (g) edge [dotted, very thick] (d);
            \path (f) edge [bend left, dotted, very thick, ->, color = blue!50] (c);

            \node[person](a) at (0+7, 1-9) {\phantom{$f_1$}};
            \node[person](b) at (0+7, -1-9) {\phantom{$f_1$}};
            \node[person](c) at (1+7, 0-9) {\phantom{$f_1$}};
            \node[spot](g) at (2+7, 0-9) {\phantom{$f_1$}};
            \node[spot](d) at (4+7, 0-9) {\phantom{$f_1$}};
            \node[spot](e) at (5+7, 1-9) {\phantom{$f_1$}};
            \node[spot](f) at (5+7, -1-9) {\phantom{$f_1$}};
    
            \node () at (b) {$3$};
            \node () at (c) {$2$};
            \node () at (a) {$1$};
            \node () at (g) {$4$};
            \node () at (d) {$12$};
            \node () at (f) {$13$};
            \node () at (e) {$14$};
    
            \draw[edge] (a)--(c)--(b);
            \draw[edge] (c)--(g);
            \draw[edge] (e)--(d)--(f);
            \path (g) edge [dotted, very thick] (d);
            
            \end{tikzpicture}
            \caption{We illustrate the claim with $m = 10$.}
            \label{fig_20}
    \end{figure}
    
    Now that we have proved the claim, it suffices to move people into the configuration of the claim. By \Cref{lem_tree_move_anywhere}, we can bring $1$ and $2$ to occupy $x_1$ and $x_2$ (without loss of generality, assume $1$ is at $x_1$). Since there is no $k$-bridge, $\ell \leq k$. We want to move $1$ and $2$ to occupy $x_{\ell -1}$ and $x_{\ell}$ so that there are at least $k$ empty spots in $V(X) \setminus \{x_1, \ldots, x_{\ell -2}\}$ and $3$ is also not at any vertex in $\{x_1, \ldots, x_{\ell -2}\}$. If $\ell = 2$, we are already done. Assume $\ell > 2$. If there are no people other than $1, 2, 3$ in $V(X) \setminus \{x_1, \ldots, x_{\ell - 2}\}$, we can directly move $1$ and $2$ up and still have the conditions satisfied. 

    If $\ell = 3$, we can move empty spots to occupy $u_1, \ldots, y_1, \ldots, y_{m}$ since there are at least $k$ empty spots and $m \leq k - 1$. We can ``bury'' a person not in $\{1, 2, 3\}$ in $x_1$ and have $1$, $2$ at $x_2$ and $x_3$ (see \Cref{fig_17}). This way, $X$ with $x_1$ removed still has at least $k$ empty spots and contains $3$. 

    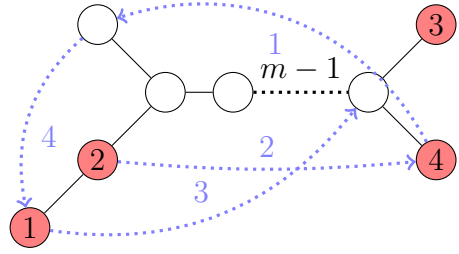
\begin{figure}[ht]
        \centering
        \begin{tikzpicture}[node distance = 2 cm, auto, scale = 0.9]
            \tikzstyle{spot} = [circle, draw, inner sep = 0pt]
            \tikzstyle{edge} = [-]
            \tikzstyle{person} = [circle, draw, fill = red!50, inner sep = 0pt]
    
            \node[person](x) at (-1,-2) {\phantom{$f_1$}};
            \node[spot](a) at (0, 1) {\phantom{$f_1$}};
            \node[person](b) at (0, -1) {\phantom{$f_1$}};
            \node[spot](c) at (1, 0) {\phantom{$f_1$}};
            \node[spot](g) at (2, 0) {\phantom{$f_1$}};
            \node[spot](d) at (4, 0) {\phantom{$f_1$}};
            \node[person](e) at (5, 1) {\phantom{$f_1$}};
            \node[person](f) at (5, -1) {\phantom{$f_1$}};
    
            \node () at (x) {$1$};
            \node () at (b) {$2$};
            \node () at (e) {$3$};
            \node () at (f) {$4$};
    
            \draw[edge] (a)--(c)--(b)--(x);
            \draw[edge] (c)--(g);
            \draw[edge] (e)--(d)--(f);
            \path (g) edge [dotted, very thick] node{$m-1$} (d);

            \path (f) edge [bend right = 40, dotted, very thick, ->, color = blue!50] node{$1$} (a);
            \path (b) edge [bend right = 5, dotted, very thick, ->, color = blue!50] node{$2$} (f);
            \path (x) edge [bend right, dotted, very thick, ->, color = blue!50] node{$3$} (d);
            \path (a) edge [bend right, dotted, very thick, ->, color = blue!50] node{$4$} (x);
        \end{tikzpicture}
        \caption{An example of how we can ``bury'' the person at $u_3$. Other cases can be worked out similarly.}
        \label{fig_17}
    \end{figure}

    Now assume $\ell > 3$. We can bring empty spots to occupy $\{x_3, \ldots, x_{\ell}, u_1, y_2\}$. Next we bring people as close to these empty spots as possible. Again, we can bury a person (not $3$) below $1$ and $2$ so that we can disregard them (see \Cref{fig_18} and \Cref{fig_19}). 

    \begin{figure}[ht]
        \centering
        \begin{tikzpicture}[node distance = 2 cm, auto, scale = 0.9]
            \tikzstyle{spot} = [circle, draw, inner sep = 0pt]
            \tikzstyle{edge} = [-]
            \tikzstyle{person} = [circle, draw, fill = red!50, inner sep = 0pt]

            \node[person](y) at (-2.5,-3.5) {\phantom{$f_1$}};
            \node[person](x) at (-1.5,-2.5) {\phantom{$f_1$}};
            \node[spot](a) at (0, 1) {\phantom{$f_1$}};
            \node[spot](b) at (-0.5, -1.5) {\phantom{$f_1$}};
            \node[spot](c) at (1, 0) {\phantom{$f_1$}};
            \node[spot](g) at (2, 0) {\phantom{$f_1$}};
            \node[person](d) at (3, 0) {\phantom{$f_1$}};
    
            \node () at (y) {$1$};
            \node () at (x) {$2$};
            \node () at (d) {$4$};
    
            \draw[edge] (a)--(c);
            \draw[edge] (b)--(x)--(y);
            \draw[edge] (c)--(g)--(d);
            \path (b) edge [dotted, very thick] node{$\ell-2$} (c);

            \path (d) edge [bend right, dotted, very thick, ->, color = blue!50] node{$1$} (a);
            \path (x) edge [bend right = 50, dotted, very thick, ->, color = blue!50] node{$2$} (d);
            \path (y) edge [bend right, dotted, very thick, ->, color = blue!50] node{$3$} (g);
            \path (a) edge [bend right, dotted, very thick, ->, color = blue!50] node{$4$} (y);
        \end{tikzpicture}
        \caption{An example of how we can bury the person next to $y_2$. Other cases can be worked out similarly.}
        \label{fig_18}
    \end{figure}
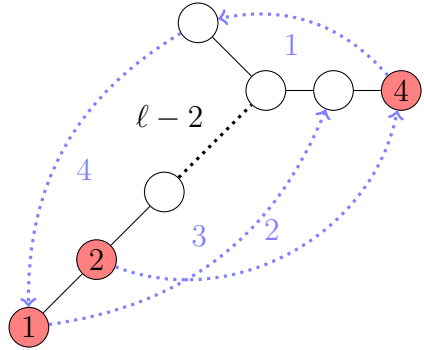
    \begin{figure}[ht]
        \centering
        \begin{tikzpicture}[node distance = 2 cm, auto, scale = 0.9]
            \tikzstyle{spot} = [circle, draw, inner sep = 0pt]
            \tikzstyle{edge} = [-]
            \tikzstyle{person} = [circle, draw, fill = red!50, inner sep = 0pt]

            \node[person](y) at (-2.5,-3.5) {\phantom{$f_1$}};
            \node[person](x) at (-1.5,-2.5) {\phantom{$f_1$}};
            \node[spot](a) at (0, 1) {\phantom{$f_1$}};
            \node[spot](b) at (-0.5, -1.5) {\phantom{$f_1$}};
            \node[spot](c) at (1, 0) {\phantom{$f_1$}};
            \node[spot](g) at (2, 0) {\phantom{$f_1$}};
            \node[person](d) at (3, 0) {\phantom{$f_1$}};
            \node[person](e) at (4, 0) {\phantom{$f_1$}};
    
            \node () at (y) {$1$};
            \node () at (x) {$2$};
            \node () at (d) {$3$};
            \node () at (e) {$4$};
    
            \draw[edge] (a)--(c);
            \draw[edge] (b)--(x)--(y);
            \draw[edge] (c)--(g)--(d)--(e);
            \path (b) edge [dotted, very thick] node{$\ell-2$} (c);

            \path (d) edge [bend left = 10, dotted, very thick, ->, color = blue!50] node{$1$} (b);
            \path (e) edge [bend right, dotted, very thick, ->, color = blue!50] node{$2$} (a);
            \path (b) edge [bend right = 80, dotted, very thick, ->, color = blue!50] node{$3$} (e);
            \path (a) edge [bend left = 80, dotted, very thick, ->, color = blue!50] node{$4$} (d);
        \end{tikzpicture}
        \caption{If $3$ is blocking some person, we can swap them.}
        \label{fig_19}
    \end{figure}
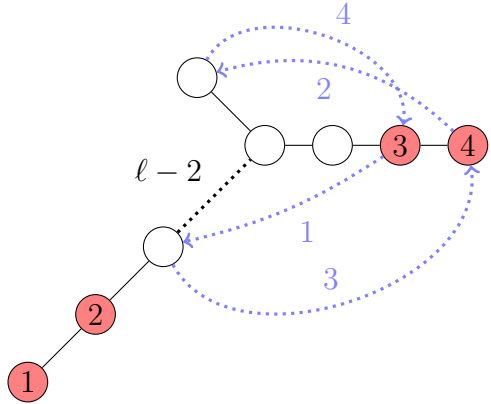
    
    Thus, we can now assume $\ell = 2$. By assumption, the branch from $y_1$ containing $u_1$ is also a path. Since the assumptions of \Cref{lem_tree_move_anywhere} still apply, we can move $1$ and $2$ down that path. Using the same argument as above, we can move $1$ and $2$ to occupy $u_1$ and $y_1$ and still have $k$ empty spots when the branch is cut off beyond $u_1$. 
    
    Therefore, we can assume that both $u_1$ and $x_1$ are leaves. Now if we cut off $x_1$ with $1$ standing there, we still have a graph without $k$-bridge and at least $k$ empty spots. By \Cref{lem_tree_move_anywhere}, we can have $2$ and $3$ occupy $u_1$ and $y_1$. We can also move empty spots to occupy the rest of the dog bone $D$. If $2$ and $3$ are in the wrong order, we can swap them using the fork in the dog bone. Thus, we have arrived at the configuration from the claim and we are done. 

    \textbf{Case IV: $X$ is a tree with a single vertex of degree $3$ and $X \neq T_6, T_7, T_8$.} Suppose $X$ consists of three paths $a_1, \ldots, a_{\ell_1}$, $b_1, \ldots, b_{\ell_2}$, and $c_1, \ldots, c_{\ell_3}$ with $a_{\ell_1} = b_{\ell_2} = c_{\ell_3}$ being the vertex of degree $3$. Then $\ell_i \leq k$ for each $i$ and $\ell_1 + \ell_2 + \ell_3 = n + 2$. Without loss of generality, assume that $\ell_1 \geq \ell_2 \geq \ell_3$. Since $X$ is not one of the exceptions, $\ell_2 \geq k \geq 4$. 
    
    By \Cref{lem_tree_move_anywhere}, we can perform a sequence of friendly swaps without the swap across $\{1, 2\}$ to bring $1, 2$ to occupy $a_1, a_2$. Suppose without loss of generality that $1$ is at $a_1$. Since the graph $X \setminus a_1$ has no $k$-bridge and currently contains at least $k$ empty spots, we can use \Cref{lem_tree_move_anywhere} to bring $2, 3$ to occupy $a_2, a_3$. Notice that there are two possible orderings: $1, 2, 3$ or $1, 3, 2$ at $a_1, a_2, a_3$ respectively. 

    Now we would like to move $1, 2, 3$ to occupy $a_{\ell_1 - 3}, a_{\ell_1 - 2}, a_{\ell_1 - 1}$ and have $a_1, \ldots, a_{\ell_1 - 4}$ occupied by other people (elements in $\{1, \ldots, k\}$). Since there are at least $k$ empty spots, we can move them to occupy $a_4, \ldots, a_{\ell_1}, b_{\ell_2 - 1}, b_{\ell_2 - 2}, b_{\ell_2 - 3}$. We can also move the next person we need to bury to $c_{\ell_3}$. Then we can move $1, 2, 3$ to $b_{\ell_2 - 1}, b_{\ell_2 - 2}, b_{\ell_2 - 3}$ and bury the person at $c_{\ell_3 - 1}$ at $a_1$ (see \Cref{fig_21}). Moving $1, 2, 3$ back, it is now as if we are in the case where $\ell_1$ is smaller by $1$. Induction then finishes this procedure. 

    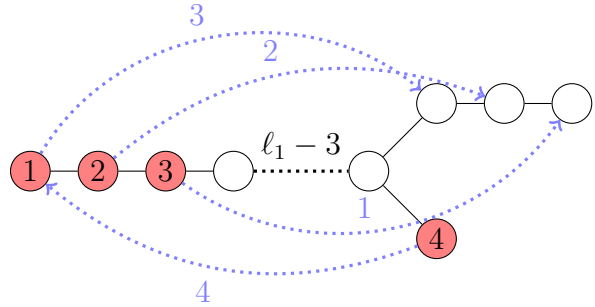
\begin{figure}[ht]
        \centering
        \begin{tikzpicture}[node distance = 2 cm, auto, scale = 0.9]
            \tikzstyle{spot} = [circle, draw, inner sep = 0pt]
            \tikzstyle{edge} = [-]
            \tikzstyle{person} = [circle, draw, fill = red!50, inner sep = 0pt]

            \node[person](a) at (0, 0) {\phantom{$f_1$}};
            \node[person](b) at (1, 0) {\phantom{$f_1$}};
            \node[person](c) at (2, 0) {\phantom{$f_1$}};
            \node[spot](d) at (3, 0) {\phantom{$f_1$}};
            \node[spot](d') at (5, 0) {\phantom{$f_1$}};
            \node[spot](e) at (6, 1) {\phantom{$f_1$}};
            \node[spot](f) at (7, 1) {\phantom{$f_1$}};
            \node[spot](g) at (8, 1) {\phantom{$f_1$}};
            \node[person](h) at (6, -1) {\phantom{$f_1$}};
    
            \node () at (a) {$1$};
            \node () at (b) {$2$};
            \node () at (c) {$3$};
            \node () at (h) {$4$};
    
            \draw[edge] (a)--(b)--(c)--(d);
            \draw[edge] (d')--(e)--(f)--(g);
            \draw[edge] (d')--(h);
            \path (d) edge [dotted, very thick] node{$\ell_1-3$} (d');

            \path (c) edge [bend right = 45, dotted, very thick, ->, color = blue!50] node{$1$} (g);
            \path (b) edge [bend left = 30, dotted, very thick, ->, color = blue!50] node{$2$} (f);
            \path (a) edge [bend left = 50, dotted, very thick, ->, color = blue!50] node{$3$} (e);
            \path (h) edge [bend left, dotted, very thick, ->, color = blue!50] node{$4$} (a);

        \end{tikzpicture}
        \caption{The way to bury the person at $c_{\ell_3 - 1}$. The order of $1, 2, 3$ may not be as shown in the figure. }
        \label{fig_21}
    \end{figure}
    
    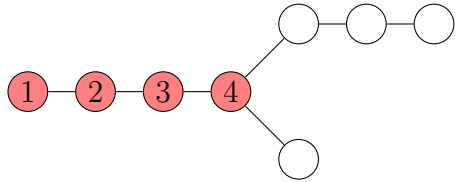
\begin{figure}[ht]
        \centering
        \begin{tikzpicture}[node distance = 2 cm, auto, scale = 0.9]
            \tikzstyle{spot} = [circle, draw, inner sep = 0pt]
            \tikzstyle{edge} = [-]
            \tikzstyle{person} = [circle, draw, fill = red!50, inner sep = 0pt]

            \node[person](a) at (0, 0) {\phantom{$f_1$}};
            \node[person](b) at (1, 0) {\phantom{$f_1$}};
            \node[person](c) at (2, 0) {\phantom{$f_1$}};
            \node[person](d) at (3, 0) {\phantom{$f_1$}};
            \node[spot](e) at (4, 1) {\phantom{$f_1$}};
            \node[spot](f) at (5, 1) {\phantom{$f_1$}};
            \node[spot](g) at (6, 1) {\phantom{$f_1$}};
            \node[spot](h) at (4, -1) {\phantom{$f_1$}};
    
            \node () at (a) {$1$};
            \node () at (b) {$2$};
            \node () at (c) {$3$};
            \node () at (d) {$4$};
    
            \draw[edge] (a)--(b)--(c)--(d)--(e)--(f)--(g);
            \draw[edge] (d)--(h);
        \end{tikzpicture}
        \caption{The people $1$ and $3$ are exchangeable from any configuration with $1, 2, 3, 4$ and four empty spots in this subgraph.}
        \label{fig_22}
    \end{figure}
    
    Now suppose $1$ is at $a_{\ell_1 - 3}$, $2, 3$ occupy $a_{\ell+1 - 2}, a_{\ell_1 - 1}$. Since $k \geq \ell_1$, we can bring a person (without loss of generality, say it is $4$) to $a_{\ell_1}$. We can also bring empty spots to $b_{\ell_2 - 1}, b_{\ell_2 - 2}, b_{\ell_2 - 3}, c_{\ell_3 - 1}$. Let $T$ be the induced subgraph of $X$ by the set of eight vertices $\{a_{\ell_1 - 3},a_{\ell_1 - 2},a_{\ell_1 - 1},a_{\ell_1},b_{\ell_2 - 1}, b_{\ell_2 - 2}, b_{\ell_2 - 3}, c_{\ell_3 - 1}\}$ (see \Cref{fig_22}). Let $K''$ be the graph $K_{4, 4}$ with an additional edge between $1$ and $2$. By brute force, we can check that $\FS(T, K'')$ is connected. In particular, $1$ and $3$ are $(T, K'')$-exchangeable from this configuration. By \Cref{lem_u_v_equivalent} and \Cref{lem_localization}, $1$ and $3$ are $(X, K')$-exchangeable from $\sigma$. 
\end{proof}

\section{Concluding remarks and future directions}\label{sec_conclusion}
In \Cref{thm_main}, we characterized when $\FS(X, K_{k_1, \ldots, k_t})$ is connected. The two ``non-trivial'' conditions that appear are that $X$ must not contain an $(n - k_t)$-bridge and that $X$ must be non-bipartite when $t = 2$. We explored what happens when we drop the non-bipartite condition in the case $t = 2$ in \Cref{thm_two_comps}. A natural question to ask is what happens if $X$ does contain an $(n - k_t)$-bridge. 
\begin{question}\label{qst_bridge}
    Suppose $t \geq 2$ and $1 \leq k_1 \leq \cdots \leq k_t$, where $n = k_1 + \cdots + k_t \geq 4$ and $k_t < n - 1$. Let $X$ be a graph on $n$ vertices that is connected, is not a cycle, and is non-bipartite if $t = 2$. Suppose that the length of the longest bridge in $X$ is $n - k_t$. What are the possible numbers of connected components in $\FS(X, K_{k_1, \ldots, k_t})$ and when is each achieved? 
\end{question}
In previous works, there are relatively few results on characterizing when a family of friends-and-strangers graphs have exactly $m$ connected components, where $m > 2$. Thus, answering \Cref{qst_bridge}, even just in the case $t = k_1 = 2$, is already an interesting avenue for future research. 

\bibliographystyle{amsplain0}
\bibliography{bib}
\end{document}